\definecolor{orange}{rgb}{1,0.5,0}
\newtheorem{theorem}{Theorem}
\newtheorem{corollary}{Corollary}
\newtheorem{lemma}{Lemma}
\renewcommand{\epsilon}{\varepsilon}
\renewcommand{\phi}{\varphi}
\renewcommand{\le}{\leqslant}
\renewcommand{\ge}{\geqslant}
\newcommand{\eps}{\varepsilon}
\newcommand{\lbd}{\lambda}
\newcommand{\cR}{\mathcal R}
   \newcommand{\N}{\ensuremath{\mathds N}}
   \newcommand{\R}{\ensuremath{\mathds R}}
\begin{document}
\title[]
   {Dynamics of a discrete eco-epidemiological model with disease in the prey}
\author{Lopo F. de Jesus}
\address{L. F. de Jesus
   Departamento de Matem\'atica\\
   Universidade da Beira Interior\\
   6201-001 Covilh\~a\\
   Portugal}
   \email{lopo.jesus@ubi.pt}
\author{C\'esar M. Silva}
\address{C. M. Silva\\
   Departamento de Matem\'atica\\
   Universidade da Beira Interior\\
   6201-001 Covilh\~a\\
   Portugal}
\email{csilva@ubi.pt}
\author{Helder Vilarinho}
\address{H. Vilarinho\\
   Departamento de Matem\'atica\\
   Universidade da Beira Interior\\
   6201-001 Covilh\~a\\
   Portugal}
\email{helder@ubi.pt}
\date{\today}
\thanks{L. de Jesus, C. M. Silva and H. Vilarinho were partially supported by FCT through CMA-UBI (project UIDB/MAT/00212/2020).}
\subjclass[2010]{92D30, 39A60}
\keywords{Eco-epidemiological model, difference equations, persistence, extinction}
\begin{abstract}
 Using Mickens nonstandard method, we obtain a discrete family of nonautonomous eco-epidemiological models that include general functions corresponding to the predation of the infected and uninfected preys. We obtain results on the persistence and extinction of the infected preys assuming that the bi-dimensional predator-prey subsystem that describes the dynamics in the absence of the infection satisfies some assumptions. Some examples and simulations are undertaken to {illustrate} our results.
\end{abstract}
\maketitle
\section{Introduction}
{In many situations eco-epidemiological models describe more accurately some ecological system than classical Lotka-Volterra models where the disease is not taken into account. It is known that the inclusion of infected classes in predator-prey models substantially change the dynamics of the original model. In particular, the inclusion of infected classes in the model can have a considerable impact on the population size of the predator-prey community~\cite{Hethcote-Wang-Han-Ma-TPB-2004, Koopmans-Wilbrink-Conyn-Natrop-Nat-Vennema-Lancet-2004}.}

{Lately, several works related to eco-epidemiological models have appeared in the literature. In~\cite{Chakraborty-Das-Haldar-Kar-2015}, the authors study the extinction and persistence of the disease in some eco-epidemiological systems; in~\cite{Bai-Xo-2018} the global stability of a delayed eco-epidemiological model with Holling type III functional response is addressed, and in~\cite{Purnomo-Darti-Suryanto-2017} the authors study an eco-epidemiological model with harvesting.}

{We note that the parameters in the eco-epidemiological models referred above are constant. On the other hand, to make models more consistent with reality it is seldom important to consider parameters that vary in time. Recently, several eco-epidemiological models with time varying parameters, particularly models with periodic coefficients have been studied~\cite{Dobson-QRV-1988, Friend-H-2002, Koopmans-Wilbrink-Conyn-Natrop-Nat-Vennema-Lancet-2004, Krebs-Blackwell-Scientific-Publishers-1978, Niu-Zhang-Teng-AMM-2011,Jesus-Silva-Vilarinho-preprint1-2020}. In the more general situation of nonautonomous models that are not necessarily periodic, threshold conditions for the extinction and persistence of the infected preys are obtained in~\cite{Niu-Zhang-Teng-AMM-2011} for a family of non-autonomous eco-epidemiological models with disease in the prey and no predation on uninfected preys. The results in that paper are generalised in~\cite{Jesus-Silva-Vilarinho-preprint2-2020} for a class of non-autonomous eco-epidemiological models that
include general functions corresponding to the predation on uninfected prey and also to the vital dynamics of uninfected prey and predator populations. Note that already in~\cite{Lu-Wang-Liu-DCDS-B-2018} a family of models that include predation on uninfected preys where considered, assuming that predation on uninfected prey is given by a bilinear functional response and also some particular form for the vital dynamics associated with uninfected preys and predators.}

{The approach in~\cite{Jesus-Silva-Vilarinho-preprint2-2020} is very different from the one in~\cite{Niu-Zhang-Teng-AMM-2011} and~\cite{Lu-Wang-Liu-DCDS-B-2018}: in~\cite{Jesus-Silva-Vilarinho-preprint2-2020} the uninfected subsystem corresponding to the dynamics of preys and predators in the absence of disease is not assumed to follow some special law but instead the hypothesis are on the stability of the referred uninfected subsystem. This approach allows the application of the results in~\cite{Jesus-Silva-Vilarinho-preprint2-2020} to eco-epidemiological models constructed from previously studied predator-prey model that satisfies the stability assumptions made.}

{In all the previous situations the models involved are continuous. In contrast, in this paper we consider a discrete version of the model in~\cite{Jesus-Silva-Vilarinho-preprint2-2020} obtained by applying Mickens discretization method. For the obtained model we derive a discrete version of the main result in that paper regarding the threshold dynamics of the model. We note that in~\cite{Hu-Teng-Jia-Zhang-ADE-2014} a discrete eco-epidemiological model was already studied. In contrast with our nonautonomous model, in that paper the model considered is autonomous and assumes no predation on uninfected preys. Additionally, in that paper the discretization method is very different from ours, resulting in a very different form for the equations obtained.}

{The structure of the present work is the following: in section~\ref{sec-1} we derive our model from the corresponding continuous model using Mickens nonstandard discretization scheme, establish our setting and some preliminary results; in section~\ref{sec2} we obtain our main result on extinction and persistence of the infective prey; finally, in section~\ref{sec3}, we consider some particular models that illustrate our results.}

\section{A general eco-epidemiological model with disease in {the} prey}\label{sec-1}

We consider the following non-autonomous eco-epidemiological model:
\begin{equation}\label{eq:principal}
\begin{cases}
S'=\Lambda(t)-\mu(t)S-a(t){f(S,I,P)P}-\beta(t)SI\\
I'=\beta(t)SI-\eta(t)g(S,I,P)I-c(t)I\\
P'=(r(t)-b(t)P)P+\gamma(t)a(t){f(S,I,P)P}+\theta(t)\eta(t)g(S,I,P)I
\end{cases},
\end{equation}
where $S$, $I$ and $P$ correspond, respectively, to the susceptible prey, infected prey and predator, $\Lambda(t)$ is the birth rate, $\mu(t)$ is the death rate of susceptible preys, $\beta(t)$ is the incidence rate of the disease, $\eta(t)$ is the predation rate of infected prey, $c(t)$ is the death rate in the infective class, $\gamma(t)$ is the
rate converting susceptible prey into predator (biomass transfer), $\theta(t)$ is the rate of converting infected prey into predator, $r(t)-b(t)P$ represent the vital dynamics of the predator populations, $a(t)f(S,I,P)$ is the predation of susceptible prey and  $\eta(t)g(S,I,P)$ is the predation of infected prey. It is assumed that only susceptible preys $S$ are capable of reproducing, i.e, the infected prey is removed by death (including natural and disease-related death) or by predation before having the possibility of reproducing.

The aim of this work is to discuss the uniform strong persistence and extinction of the infectives $I$ of the discrete counterpart of the system~\eqref{eq:principal}. A possible discretization of the above model, with stepsize $h$, derived by applying Mickens' nonstandard finite difference method~\cite{Mickens-JCAM-1999}, leads to the following set of equations:
\[
\begin{cases}
S(nh+h)-S(nh)
= h\Lambda(nh)-h\mu(nh)S(nh+h)\\
\quad\quad \quad \quad\quad \quad \quad\quad \quad \quad -ha(nh){f(S(nh+h),I(nh),P(nh))P(nh)}\\
\quad\quad \quad \quad\quad \quad \quad\quad \quad \quad -h\beta(nh)S(nh+h)I(nh)\\
I(nh+h)-I(nh)
= h\beta(nh)S(nh+h)I(nh)\\
\quad\quad \quad \quad\quad \quad \quad\quad \quad \quad -h\eta(nh)g(S(nh),I(nh),P(nh))I(nh+h)\\
\quad\quad \quad \quad\quad \quad \quad\quad \quad \quad  -hc(nh)I(nh+h)\\
P(nh+h)-P(nh)=h(r(nh)-b(nh)P(nh+h))P(nh)+h\gamma(nh)a(nh)\times\\
\quad\quad \quad \quad\quad \quad \quad\quad \quad \quad
\times {f(S(nh+h),I(nh),P(nh))P(nh)}\\
\quad\quad \quad \quad\quad \quad \quad\quad \quad \quad +h\theta(nh)\eta(nh)g(S(nh),I(nh),P(nh))I(nh+h)
\end{cases}.
\]
Using the notation $\xi_n=h\xi(nh)$ for $g=\Lambda, \mu, a, \beta, \eta, c, r, b$ and also $\zeta_n=\zeta(nh)$ for $\zeta=\gamma, \theta$, we obtain the following system of difference equations:
\begin{equation}\label{eq:principal-disc}
\begin{cases}
S_{n+1}-S_n=\Lambda_n-\mu_nS_{n+1}-a_n{f(S_{n+1},I_n,P_n)P_n}-\beta_nS_{n+1}I_n\\
I_{n+1}-I_n=\beta_nS_{n+1}I_n-\eta_ng(S_n,I_n,P_n)I_{n+1}-c_nI_{n+1}\\
P_{n+1}-P_n=(r_n-b_nP_{n+1})P_n+\gamma_na_n{f(S_{n+1},I_n,P_n)P_n}+\theta_n\eta_ng(S_n,I_n,P_n)I_{n+1}
\end{cases}.
\end{equation}

We will assume that
\begin{enumerate}[H$1$)]
\item \label{Cond-1} $(\Lambda_n)$, $(a_n)$, $(\beta_n)$, $(\eta_n)$, $(c_n)$, $(r_n)$, $(b_n)$, $(\gamma_n)$ and $(\theta_n)$ are bounded and nonnegative sequences and  $0<\mu_n \le c_n$;
\item \label{Cond-1a}$(\Lambda_n)$, $(r_n)$ and $(b_n)$ are bounded away from zero;
\item \label{cond-f} $f,g:(\R_0^+)^3\to \R$ are $C^1$ nonnegative; for fixed $x,z \ge 0$, $y\mapsto f(x,y,z)$ and $y \mapsto g(x,y,z)$ are nonincreasing; for fixed $y,z \ge 0$, $x \mapsto f(x,y,z)$ is nondecreasing and $x \mapsto g(x,y,z)$ is nonincreasing; for fixed $x,y \ge 0$, $z \mapsto g(x,y,z)$ is nonincreasing and $z \mapsto g(x,y,z)$ is nondecreasing;
\item \label{Cond-2} there is $\omega \in \N$ such that
    $$\limsup_{n \to +\infty} \prod_{k=n}^{n+\omega} \dfrac{1}{1+\mu_k} < 1.$$
\end{enumerate}

It follows from~H\ref{Cond-2}) that there are constants $K>0$ and $\theta \in ]0,1[$ such that
\begin{equation}\label{eq:bahavior-prod-Lambda:(1+d)}
\prod_{k=m}^{n-1} \dfrac{1}{1+\mu_k} < K \theta^{n-m},
\end{equation}
for any $m,n \in \N_0$ with $n>m$.
{
\begin{enumerate}[H$1$)]
\setcounter{enumi}{4}
\item \label{Cond-aditional} Given $p \in \N$ there is a unique solution $((S_n,I_n,P_n))_{n \ge p}$ of
    system~\eqref{eq:principal-disc} with initial condition $(S_p,I_p,P_p) \in (\R_0^+)^3$.
\item \label{Cond-aditional2} Any solution of system~\eqref{eq:principal-disc} with nonnegative (resp. positive) initial condition, $(S_q,I_q,P_q)$  is nonnegative (resp. positive) for all $n \ge q$.
\end{enumerate}
}
Note that {when $f(S_{n+1},I_n,P_n)=S_{n+1}$ and $g(S_n,I_n,P_n)=P_n$ in~\eqref{eq:principal-disc}, the equation can be rewritten in explicit form:
\begin{equation}\label{eq:principal-disc-explicit}
\begin{cases}
S_{n+1}=\dfrac{\Theta_n}{\Psi_n}\\[4mm]
I_{n+1}=\dfrac{\beta_n\Theta_n+\Psi_n}{\Psi_n\Phi_n}I_n\\[4mm]
P_{n+1}=\dfrac{(1+r_n)\Psi_n\Phi_n+\gamma_na_n\Theta_n\Phi_n+\theta_n\eta_n(\Psi_n+\beta_n\Theta_n)I_n}{\Psi_n\Phi_n(1+b_nP_n)}P_n
\end{cases},
\end{equation}
where $\Psi_n=1+\mu_n+\beta_nI_n+a_nP_n$, $\Phi_n=1+\eta_nP_n+c_n$ and $\Theta_n=\Lambda_n+S_n$.
From~\eqref{eq:principal-disc-explicit}, we conclude {that when $f(S_{n+1},I_n,P_n)=S_{n+1}$ system~\eqref{eq:principal-disc} is well defined and H\ref{Cond-aditional2}) holds. Let us introduce the notation $f^\ell=\inf f_n$ and $f^u=\sup f_n$.

To proceed, we need to consider two auxiliary equations. The first one corresponds to the dynamics of preys in the absence of infected preys and predators:
\[
s_{n+1}-s_n=\Lambda_n-\mu_n s_{n+1}.
\]
Rearranging terms, the equation above becomes:
\begin{equation}\label{eq:aux-syst-S}
s_{n+1}=\frac{\Lambda_n}{1+\mu_n}+\frac{s_n}{1+\mu_n}.
\end{equation}
We have the following lemma that was essentially proved in~\cite{Mateus-IJDE-2016}:
\begin{lemma}\label{lemma:aux-S}
We have the following:
\begin{enumerate}[i)]
\item \label{Cond-C1-aux}
The solution of equation~\eqref{eq:aux-syst-S} with $s_p=0$ is the identically null sequence;
\item \label{Cond-C1a-aux}
All solutions $(s_n)$ of equation~\eqref{eq:aux-syst-S} with initial condition $s_0 > 0$ are positive for all $n \in \N$;
\item \label{Cond-C1b-aux}
Given a solution $(s_n)$ of equation~\eqref{eq:aux-syst-S} with initial condition $s_0 \in [\Lambda^\ell/\mu^u, \Lambda^u/\mu^\ell]$ we have
    $$\dfrac{\Lambda^\ell}{\mu^u} \le s_n \le \dfrac{\Lambda^u}{\mu^\ell}$$ for all $n \in \N$;
\item \label{Cond-C3-aux} Each fixed solution $(s_n)$ of~\eqref{eq:aux-syst-S} with initial condition $s_0 \ge 0$ is bounded and globally uniformly attractive on $[0,+\infty)$;
\item \label{cond-3-aux} There is a constant $D>0$ such that if $(s_n)$ is a solution of~\eqref{eq:aux-syst-S} and $(\tilde s_n)$ is a solution of the system
\begin{equation}\label{eq:sist-aux-f}
s_{n+1}=\dfrac{\Lambda_n +s_n+\phi_n}{1+ \mu_n}
\end{equation}
with $\tilde s_0=s_0$ then
\[
\sup_{n \ge 0} |\tilde s_n - s_n| \le D \ \sup_{n \ge 0} |\phi_n|.
\]
\item \label{cond-4-aux} There is a constant $E>0$ such that if $(s_n)$ is a solution of~\eqref{eq:aux-syst-S} and $(\tilde s_n)$ is a solution of the system
\begin{equation}\label{eq:sist-aux-f-2}
s_{n+1}=\dfrac{\Lambda_n +s_n}{1+ \mu_n+\psi_n}
\end{equation}
with $\tilde s_0=s_0$ then there is $N_1$ sufficiently large such that
\[
\sup_{n \ge N_1} |\tilde s_n - s_n| \le E \ \sup_{n \ge N_1} |\psi_n|.
\]
\end{enumerate}
\end{lemma}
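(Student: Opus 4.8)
The plan is to treat~\eqref{eq:aux-syst-S} for what it is: a scalar linear nonautonomous difference equation $s_{n+1}=a_n s_n+a_n\Lambda_n$ with $a_n=1/(1+\mu_n)\in(0,1)$. Everything then follows from its closed form together with the geometric estimate~\eqref{eq:bahavior-prod-Lambda:(1+d)}. Variation of parameters gives, for any initial index $n_0$ and $n>n_0$,
\[
s_n=\prts{\prod_{k=n_0}^{n-1}\frac{1}{1+\mu_k}}s_{n_0}+\sum_{j=n_0}^{n-1}\prts{\prod_{k=j}^{n-1}\frac{1}{1+\mu_k}}\Lambda_j,
\]
which I would confirm by a one-line induction. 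Parts~\ref{Cond-C1-aux}) and~\ref{Cond-C1a-aux}) then reduce to a direct induction on the recursion $s_{n+1}=(\Lambda_n+s_n)/(1+\mu_n)$: since $\Lambda_n\ge0$, $\mu_n>0$ and every factor is positive, the asserted sign of the iterates is preserved.

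For~\ref{Cond-C1b-aux}) I would show the interval $[\Lambda^\ell/\mu^u,\Lambda^u/\mu^\ell]$ is forward invariant. As $s\mapsto(\Lambda_n+s)/(1+\mu_n)$ is increasing, it suffices to check the endpoints: if $s_n\ge\Lambda^\ell/\mu^u$ then $s_{n+1}\ge\Lambda^\ell(1+1/\mu^u)/(1+\mu_n)\ge\Lambda^\ell/\mu^u$ precisely because $\mu_n\le\mu^u$, and symmetrically $s_n\le\Lambda^u/\mu^\ell$ forces $s_{n+1}\le\Lambda^u/\mu^\ell$ because $\mu_n\ge\mu^\ell$; induction closes the claim. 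For~\ref{Cond-C3-aux}), boundedness of any solution with $s_0\ge0$ is read off the closed form: the first term is $\le K\theta^{n-n_0}s_{n_0}\to0$ and, using~\eqref{eq:bahavior-prod-Lambda:(1+d)} with $\Lambda_j\le\Lambda^u$, the forcing sum is dominated by $\Lambda^u K\sum_{i\ge1}\theta^i<\infty$. Uniform attractivity is cleaner still: the difference $d_n=s_n-\hat s_n$ of any two solutions satisfies the homogeneous recursion $d_{n+1}=d_n/(1+\mu_n)$, so $\abs{d_n}<K\theta^{n-n_0}\abs{d_{n_0}}$, a decay whose rate is independent of the two solutions chosen, which is exactly global uniform attractivity.

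Part~\ref{cond-3-aux}) is a perturbation of the forcing term. Writing $e_n=\tilde s_n-s_n$ for the solution $(\tilde s_n)$ of~\eqref{eq:sist-aux-f} with $\tilde s_0=s_0$, the $\Lambda_n$ and $s_n$ contributions cancel and one gets the linear recursion $e_{n+1}=(e_n+\phi_n)/(1+\mu_n)$ with $e_0=0$. Solving and estimating with~\eqref{eq:bahavior-prod-Lambda:(1+d)} yields $\abs{e_n}\le\sum_{j=0}^{n-1}\prts{\prod_{k=j}^{n-1}\tfrac{1}{1+\mu_k}}\abs{\phi_j}\le K\tfrac{\theta}{1-\theta}\sup_{j\ge0}\abs{\phi_j}$, so $D=K\theta/(1-\theta)$ works and, crucially, the bound holds uniformly in $n$, which is why no threshold index is needed here.

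Part~\ref{cond-4-aux}) is where I expect the real work, since the perturbation now sits in the denominator. With $e_n=\tilde s_n-s_n$ for the solution $(\tilde s_n)$ of~\eqref{eq:sist-aux-f-2} and $\tilde s_n=s_n+e_n$, one computes
\[
e_{n+1}=\frac{e_n}{1+\mu_n+\psi_n}-\frac{(\Lambda_n+s_n)\,\psi_n}{(1+\mu_n+\psi_n)(1+\mu_n)} .
\]
The forcing is now proportional to $\psi_n$ times $\Lambda_n+s_n=(1+\mu_n)s_{n+1}$, which is bounded by~\ref{Cond-C3-aux}); and since in the intended application $\psi_n\ge0$, each homogeneous factor satisfies $1/(1+\mu_n+\psi_n)\le1/(1+\mu_n)$, so~\eqref{eq:bahavior-prod-Lambda:(1+d)} still controls the products and the homogeneous part contracts geometrically. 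Summing as in~\ref{cond-3-aux}) then gives $\abs{e_n}\le C\sum_{j<n}\theta^{\,n-j}\abs{\psi_j}$ for a constant $C$ built from the bound on $\Lambda_n+s_n$. The obstacle is that the statement only controls $\sup_{n\ge N_1}\abs{\psi_n}$, whereas $e_n$ with $n\ge N_1$ still remembers the values $\psi_j$ with $j<N_1$; this is exactly what forces the threshold $N_1$. I would split the sum at $N_1$ and choose $N_1$ large enough that the geometric weights $\theta^{\,n-j}$ attached to the pre-$N_1$ indices are negligible, so that that block is dominated by the tail $\sup_{n\ge N_1}\abs{\psi_n}$ and can be absorbed into the constant $E$. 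Making this absorption quantitative — trading the pre-$N_1$ contribution against the decay in~\eqref{eq:bahavior-prod-Lambda:(1+d)} — is the only genuinely technical point of the lemma.
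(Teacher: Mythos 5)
Your items i)--v) are correct and self-contained, which is actually more than the paper offers: for those items the paper simply cites Lemma~1 of Mateus (2016) and only writes out a proof of vi). For vi) your strategy is the paper's strategy --- form the error recursion, bound the coefficient multiplying $\psi_n$ by a constant coming from boundedness of solutions, then sum against the geometric estimate~\eqref{eq:bahavior-prod-Lambda:(1+d)} --- with two cosmetic differences. First, the paper multiplies through by $(1+\mu_n)$ to get $(\tilde s_{n+1}-s_{n+1})(1+\mu_n)=\tilde s_n-s_n-\psi_n\tilde s_{n+1}$, so its homogeneous factor is exactly $1/(1+\mu_n)$ and no sign hypothesis on $\psi_n$ is needed, whereas your form of the recursion needs $\psi_n\ge 0$ to dominate $1/(1+\mu_n+\psi_n)$ by $1/(1+\mu_n)$. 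Second, the paper bounds the inhomogeneous coefficient by $\Lambda^u/(\mu^\ell+\psi^\ell)+\eps$ (the asymptotic bound for the \emph{perturbed} solution $\tilde s_{n+1}$), while you use $\Lambda_n+s_n=(1+\mu_n)s_{n+1}$ and item iv); both are legitimate.

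The step you defer --- absorbing the pre-$N_1$ memory into $E\,\sup_{n\ge N_1}|\psi_n|$ --- is a genuine gap, and in the stated generality it cannot be closed. When $\psi_n\ge0$ the forcing in the error recursion has a fixed sign, so $|e_{n+1}|\ge |e_n|/(1+\mu^u+\psi^u)$: once nonzero, the error decays at most geometrically, while $\sup_{n\ge N_1}|\psi_n|$ may decay super-geometrically (take $\psi_n=\theta^{n^2}$); since moreover $|e_1|$ grows linearly in $s_0$, no constant $E$ uniform over solutions and perturbations can satisfy $\sup_{n\ge N_1}|e_n|\le E\sup_{n\ge N_1}|\psi_n|$ for any choice of $N_1$. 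Your absorption does work under an extra hypothesis such as $\limsup_n|\psi_n|$ being comparable to $\sup_n|\psi_n|$, because solving from any $N_0$ gives $|e_n|\le K\theta^{n-N_0}|e_{N_0}|+\tfrac{CK\theta}{1-\theta}\sup_{j\ge N_0}|\psi_j|$, whence $\limsup_n|e_n|\le \tfrac{CK\theta}{1-\theta}\limsup_n|\psi_n|$; this covers every application made of the lemma in the paper, where $\psi_n$ is essentially a small positive constant. You should know that the paper's own proof does not resolve this point either --- it is in fact worse: its closing display reads $\sup_{n \ge N_1} w_n \le w_n \le E \sup_{n \ge N_1}|\psi_n|$, which is not meaningful, and it silently replaces the $\sup_{n\ge0}|\psi_n|$ produced by its summation (which starts at $m=0$, inside the region where its bound on $\tilde s_{n+1}$ is not yet valid) by $\sup_{n\ge N_1}|\psi_n|$. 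So the defect you isolated is a defect of the lemma as stated; the clean repair is either to restate vi) with limits superior on both sides (which your estimate proves, with $E$ uniform), or to add the comparability hypothesis on $(\psi_n)$ under which your splitting argument closes.
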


\begin{proof}
Properties~\ref{Cond-C1-aux}) to~\ref{cond-3-aux}) follow from Lemma 1 in~\cite{Mateus-IJDE-2016}. To prove~\ref{cond-4-aux}),
let $(s_n)$ be a solution of~\eqref{eq:aux-syst-S} and $(\tilde s_n)$ be a solution of~\eqref{eq:sist-aux-f-2} with $\tilde s_0=s_0$. By~\eqref{eq:aux-syst-S} and~\eqref{eq:sist-aux-f-2}, we have
\[
(\tilde s_{n+1}-s_{n+1})(1+\mu_n)=\tilde s_n-s_n-g_n \tilde s_{n+1}
\]
Therefore, letting $w_n=|\tilde s_n-s_n|$, we have
\[
w_{n+1}(1+\mu_n)\le w_n + |g_n| \tilde s_{n+1}
\]
and thus
\[
w_{n+1} \le \frac{w_n}{1+\mu_n} + \frac{|\psi_n|\tilde s_{n+1}}{1+\mu_n}
\]
Fix $\eps>0$. By~\ref{Cond-C1b-aux}) and~\ref{Cond-C3-aux}) we get, for $n$ sufficiently large, say $n \ge N_1$,
\[
w_{n+1} \le \frac{w_n}{1+\mu_n} + \frac{|\psi_n|}{1+\mu_n}\left[\frac{\Lambda^u}{\mu^\ell+\psi^\ell}+\eps\right]
\]
and thus, for $n \ge N_1$,
\[
\begin{split}
w_n
& \le  \left[\frac{\Lambda^u}{\mu^\ell+\psi^\ell}+\eps\right] \sum_{m=0}^{n-1} |\psi_m| \left(\prod_{k=m}^{n-1}\dfrac{1}{1+\mu_k}\right)\\
& \le  \left[ \frac{\Lambda^u}{\mu^\ell+\psi^\ell}+\eps \right]  \ \sup_{n \ge 0} |\psi_n| \, K \sum_{m=0}^{n-1} \theta^{n-m}\\
& \le  \left[ \frac{\Lambda^u}{\mu^\ell+\psi^\ell}+\eps \right] \, \frac{K\theta}{1-\theta} \, \sup_{n \ge 0}|\psi_n|.
\end{split}
\]
Defining $E= K\theta\left[\Lambda^u/(\mu^\ell+\psi^\ell)+\eps \right] /(1-\theta)$, we get
\[
\sup_{n \ge N_1} |\tilde s_n - s_n| = \sup_{n \ge N_1} w_n \le w_n \le E \ \sup_{n \ge N_1} |\psi_n|,
\]
and the result follows.
\end{proof}

We also need to consider the equation:
\[
y_{n+1}-y_n=(r_n-b_ny_{n+1})y_n.
\]
Rearranging terms, we get:
\begin{equation}\label{eq:aux-syst-P}
y_{n+1}=\frac{r_ny_n+y_n}{1+b_ny_n}
\end{equation}
The following lemma holds.
\begin{lemma}\label{lemma:aux-P}
We have the following:
\begin{enumerate}[i)]
\item \label{Cond-C1-aux-y}
The solution of equation~\eqref{eq:aux-syst-P} with $y_p=0$ is the identically null sequence;
\item \label{Cond-C1a-aux-y}
All solutions $(y_n)$ of equation~\eqref{eq:aux-syst-P} with initial condition $y_0 > 0$ are positive for all $n \in \N$;
\item \label{Cond-C1b-aux-y}
Given a solution $(y_n)$ of equation~\eqref{eq:aux-syst-P} with initial condition $y_0 \in [r^\ell/b^u,r^u/b^\ell]$ we have
    $$\frac{r^\ell}{b^u} \le y_n \le \dfrac{r^u}{b^\ell}$$ for all $n \in \N$;
\item \label{Cond-C3-aux-y} Each fixed solution $(y_n)$ of~\eqref{eq:aux-syst-P} with initial condition $y_0 > 0$ is bounded and globally uniformly attractive on $]0,+\infty)$;
\item \label{cond-3-aux-y} There is a constant $E>0$ such that, if $(y_n)$ is a solution of~\eqref{eq:aux-syst-P} and $(\tilde y_n)$ is a solution of the system
\begin{equation}\label{eq:sist-aux-g-y}
y_{n+1}=\frac{r_ny_n+y_n}{1+(b_n+g_n)y_n}, \quad n=0,1,\ldots
\end{equation}
with $\tilde y_0=y_0$ then there is $N_1$ sufficiently large such that
\[
\sup_{n \ge N_1} |\tilde y_n - y_n| \le E \ \sup_{n \ge N_1} |g_n|.
\]
\item \label{cond-4-aux-y} There is a constant $G>0$ such that, if $(y_n)$ is a solution of~\eqref{eq:aux-syst-P} and $(\tilde y_n)$ is a solution of the system
\begin{equation}\label{eq:sist-aux-g-y2}
y_{n+1}=\frac{(r_n+h_n)y_n+y_n}{1+b_ny_n}, \quad n=0,1,\ldots
\end{equation}
with $\tilde y_0=y_0$ then there is $N_2$ sufficiently large such that
\[
\sup_{n \ge N_2} |\tilde y_n - y_n| \le G \ \sup_{n \ge N_2} |h_n|.
\]
\end{enumerate}
\end{lemma}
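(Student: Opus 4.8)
The plan is to reduce the entire lemma to Lemma~\ref{lemma:aux-S} by means of the reciprocal substitution $x_n = 1/y_n$, which linearises \eqref{eq:aux-syst-P}. Indeed, for a solution $(y_n)$ of \eqref{eq:aux-syst-P} with $y_n>0$ one computes
\[
x_{n+1}=\frac{1}{y_{n+1}}=\frac{1+b_ny_n}{(1+r_n)y_n}=\frac{x_n}{1+r_n}+\frac{b_n}{1+r_n}=\frac{b_n+x_n}{1+r_n},
\]
which is exactly equation~\eqref{eq:aux-syst-S} with $\Lambda_n$ replaced by $b_n$ and $\mu_n$ replaced by $r_n$. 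By H\ref{Cond-1}) and H\ref{Cond-1a}) the sequences $(b_n)$ and $(r_n)$ are bounded, nonnegative and bounded away from zero, so all the hypotheses under which Lemma~\ref{lemma:aux-S} was established hold for this transformed equation; in particular the decay condition analogous to~H\ref{Cond-2}) is automatic, since $r_n\ge r^\ell>0$ gives $\prod_{k=m}^{n-1}(1+r_k)^{-1}\le (1+r^\ell)^{-(n-m)}$, i.e.\ \eqref{eq:bahavior-prod-Lambda:(1+d)} with $K=1$ and $\theta=1/(1+r^\ell)$. Parts~\ref{Cond-C1-aux-y}) and~\ref{Cond-C1a-aux-y}) do not pass through the substitution and are handled directly: if $y_p=0$ then $y_{n+1}=(1+r_n)y_n/(1+b_ny_n)=0$ by induction, and if $y_0>0$ then, since $1+r_n>0$ and $1+b_ny_n>0$ whenever $y_n\ge0$, positivity propagates.

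For~\ref{Cond-C1b-aux-y}) and~\ref{Cond-C3-aux-y}) I would translate the corresponding statements for $(x_n)$. The interval $[r^\ell/b^u,r^u/b^\ell]$ for $y$ corresponds under $x=1/y$ to $[b^\ell/r^u,b^u/r^\ell]=[\Lambda^\ell/\mu^u,\Lambda^u/\mu^\ell]$ for the transformed data, so invariance of the latter under \eqref{eq:aux-syst-S}, given by Lemma~\ref{lemma:aux-S}\,\ref{Cond-C1b-aux}), yields~\ref{Cond-C1b-aux-y}). For~\ref{Cond-C3-aux-y}), boundedness and global uniform attractivity of the $x$-solutions on $[0,+\infty)$ (Lemma~\ref{lemma:aux-S}\,\ref{Cond-C3-aux})) transfer to $(y_n)$ once one observes that the reciprocal is Lipschitz on the relevant range: from the recursion $x_{n+1}\ge b_n/(1+r_n)\ge b^\ell/(1+r^u)$, so $x_n$ is bounded below by a positive constant for $n\ge1$, whence $y_n=1/x_n$ is bounded and $|y_n-\hat y_n|=|x_n-\hat x_n|/(x_n\hat x_n)\le C\,|x_n-\hat x_n|$ for the two compared solutions.

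Parts~\ref{cond-3-aux-y}) and~\ref{cond-4-aux-y}) follow the same pattern after identifying the transformed perturbed equations. Replacing $b_n$ by $b_n+g_n$ in \eqref{eq:sist-aux-g-y} turns the $x$-equation into $\tilde x_{n+1}=(b_n+\tilde x_n+g_n)/(1+r_n)$, which is \eqref{eq:sist-aux-f} with $\phi_n=g_n$, so Lemma~\ref{lemma:aux-S}\,\ref{cond-3-aux}) gives $\sup|\tilde x_n-x_n|\le D\sup|g_n|$; likewise, replacing $r_n$ by $r_n+h_n$ in \eqref{eq:sist-aux-g-y2} gives $\tilde x_{n+1}=(b_n+\tilde x_n)/(1+r_n+h_n)$, which is \eqref{eq:sist-aux-f-2} with $\psi_n=h_n$, so Lemma~\ref{lemma:aux-S}\,\ref{cond-4-aux}) gives the analogous bound past some index. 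In both cases I would then pass back to $y$ through $|\tilde y_n-y_n|=|\tilde x_n-x_n|/(x_n\tilde x_n)$, using the uniform lower bounds $x_n,\tilde x_n\ge b^\ell/(1+r^u)$ (valid once $\sup|g_n|<b^\ell$, respectively for all $n$), so that the Lipschitz factor is uniformly bounded and the constants $E$ and $G$ are obtained by multiplying $D$, respectively the constant of Lemma~\ref{lemma:aux-S}\,\ref{cond-4-aux}), by $\sup_n 1/(x_n\tilde x_n)$.

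The main obstacle is precisely this back-translation: everything quantitative about $(y_n)$ hinges on controlling the reciprocal map, i.e.\ on a uniform positive lower bound for $x_n$ (equivalently a uniform upper bound for $y_n$). This bound is available directly from the transformed recursion, and it is what forces the restriction to indices $n\ge N_1$ (resp.\ $n\ge N_2$) and produces the Lipschitz constants that turn the linear estimates of Lemma~\ref{lemma:aux-S} into the nonlinear estimates~\ref{cond-3-aux-y}) and~\ref{cond-4-aux-y}).
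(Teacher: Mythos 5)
Your proposal is correct and follows essentially the same route as the paper: the paper's proof consists precisely of the change of variable $w_n=1/y_n$, which turns \eqref{eq:aux-syst-P}, \eqref{eq:sist-aux-g-y} and \eqref{eq:sist-aux-g-y2} into \eqref{eq:aux-syst-S}, \eqref{eq:sist-aux-f} and \eqref{eq:sist-aux-f-2} with $\Lambda_n,\mu_n$ replaced by $b_n,r_n$, followed by an appeal to Lemma~\ref{lemma:aux-S} (with part~\ref{Cond-C1-aux-y}) handled directly). If anything, you are more careful than the paper, which leaves implicit both the verification that the transformed data satisfy the standing hypotheses and the back-translation of the estimates from $1/y_n$ to $y_n$ via the uniform positive lower bound on $x_n$ and the resulting Lipschitz factor.
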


\begin{proof}
With the change of variable $w_n=1/y_n$, equation~\eqref{eq:aux-syst-P} becomes
\[
w_{n+1}=\frac{b_n}{r_n+1}+\frac{w_n}{r_n+1},
\]
equation~\eqref{eq:sist-aux-g-y} becomes
$$w_{n+1}=\frac{w_n+b_n+ g_n}{r_n+1}.$$
and equation~\eqref{eq:sist-aux-g-y2} becomes
$$w_{n+1}=\frac{w_n+b_n}{1+r_n+h_n}.$$
Using Lemma~\ref{lemma:aux-S}, we obtain~\ref{Cond-C1a-aux-y}) to~\ref{cond-4-aux-y}). Property~\ref{Cond-C1-aux-y}) is immediate.
\end{proof}

We must assume the following:
\begin{enumerate}[H$1$)]
\setcounter{enumi}{6}
\item \label{Cond-7a} Each solution of~\eqref{eq:principal-disc} with positive initial condition is bounded and there is a bounded region $\mathcal R$ that contains the $\omega$-limit of all solutions of~\eqref{eq:principal-disc} with positive initial conditions.
\end{enumerate}
Notice in particular that condition~H\ref{Cond-7a}) implies that there is $L>0$ such that, for each solution $(S_n,I_n,P_n)$ we have
\begin{equation}\label{eq:bound}
\limsup_{t \to +\infty} \, (S_n+I_n+P_n) < L.
\end{equation}

The next lemma shows that, when $g(S,I,P)=g_0(S,I)P$, there is an invariant region that attracts all orbits of system~\eqref{eq:principal-disc}.
{\begin{lemma}\label{lemma:region}
Assume that $g(S,I,P)=g_0(S,I)P$. Then, there is $L>0$ such that, for any solution $(S_n,I_n,P_n)$ of~\eqref{eq:principal-disc},  with nonnegative initial conditions, there is $T \in \N$ such that
\[
S_n+I_n+P_n \le L \quad \text{for} \quad n \ge T.
\]
\end{lemma}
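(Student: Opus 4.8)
The plan is to establish the eventual bound in three comparison steps, controlling first the total prey population $S_n+I_n$, then the predation coefficients, and finally the predator $P_n$, each time by comparison with one of the scalar auxiliary equations of Lemmas~\ref{lemma:aux-S} and~\ref{lemma:aux-P}; throughout, H\ref{Cond-aditional2}) guarantees that every quantity is nonnegative, so nonnegative terms may be discarded when passing to inequalities. First I would add the first two equations of~\eqref{eq:principal-disc}. The incidence terms $\pm\beta_nS_{n+1}I_n$ cancel exactly, and discarding the nonnegative predation terms $a_nf(\cdots)P_n$ and $\eta_ng(\cdots)I_{n+1}$ yields, for $U_n:=S_n+I_n$,
\[
U_{n+1}-U_n\le \Lambda_n-\mu_nS_{n+1}-c_nI_{n+1}.
\]
Here the sign condition $\mu_n\le c_n$ from H\ref{Cond-1}) is decisive: it gives $\mu_nS_{n+1}+c_nI_{n+1}\ge \mu_nU_{n+1}$, so that $U_{n+1}\le(\Lambda_n+U_n)/(1+\mu_n)$. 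Since $x\mapsto(\Lambda_n+x)/(1+\mu_n)$ is increasing, an induction shows $U_n\le u_n$, where $(u_n)$ solves the auxiliary equation~\eqref{eq:aux-syst-S} with matching initial condition. By parts~\ref{Cond-C1b-aux}) and~\ref{Cond-C3-aux}) of Lemma~\ref{lemma:aux-S} (equivalently, from the explicit estimate $u_n\le u_0K\theta^n+\Lambda^uK/(1-\theta)$ furnished by~\eqref{eq:bahavior-prod-Lambda:(1+d)}), the sequence $(u_n)$ is bounded with $\limsup_n u_n\le \Lambda^u/\mu^\ell$ regardless of the initial value; hence there is a constant $M_1>0$, independent of the initial data, and $T_1\in\N$ with $S_n+I_n\le M_1$, and in particular $S_n\le M_1$, $I_n\le M_1$, for $n\ge T_1$.

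Next I would bound the predation coefficients. By the monotonicity in H\ref{cond-f}), $f$ is nondecreasing in its first argument and nonincreasing in the other two, so for $n\ge T_1$ one has $f(S_{n+1},I_n,P_n)\le f(M_1,0,0)=:\bar f$. Writing $g(S,I,P)=g_0(S,I)P$ with $g_0(S,I)=g(S,I,1)$ continuous, we also get $g_0(S_n,I_n)\le\bar g_0:=\max_{[0,M_1]^2}g_0$ for $n\ge T_1$. The structural assumption $g=g_0(S,I)P$ plays two roles: it makes the infected-prey consumption term of the predator equation \emph{linear} in $P_n$, and it forces $g$ to vanish when $P_n=0$, so that $P_n=0$ for one index implies $P_m=0$ for all later $m$ (a case in which the conclusion is trivial); we may therefore assume $P_n>0$.

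Finally I would close the argument on $P_n$. Inserting $f\le\bar f$, $g_0\le\bar g_0$ and $I_{n+1}\le M_1$ into the third equation of~\eqref{eq:principal-disc} gives, for $n\ge T_1$,
\[
P_{n+1}-P_n\le RP_n-b_nP_{n+1}P_n,\qquad R:=r^u+\gamma^u a^u\bar f+\theta^u\eta^u\bar g_0\,M_1,
\]
whence $P_{n+1}\le(1+R)P_n/(1+b_nP_n)$. As the right-hand side is increasing in $P_n$, comparison with the solution $(\tilde y_n)$ of the logistic-type recursion $\tilde y_{n+1}=(1+R)\tilde y_n/(1+b_n\tilde y_n)$, which is precisely~\eqref{eq:aux-syst-P} with $r_n$ replaced by the constant $R$, yields $P_n\le\tilde y_n$. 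Exactly as in the proof of Lemma~\ref{lemma:aux-P}, the substitution $w_n=1/\tilde y_n$ linearises the recursion to $w_{n+1}=(w_n+b_n)/(1+R)$, a contraction of ratio $(1+R)^{-1}<1$ since $R\ge r^u>0$; hence $\liminf_n w_n\ge b^\ell/R$ and $\tilde y_n$ is eventually bounded by a constant $M_2$ independent of the initial data. Thus there is $T\ge T_1$ with $P_n\le M_2$ for $n\ge T$, and taking $L:=M_1+M_2$ gives $S_n+I_n+P_n\le L$ for all $n\ge T$.

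The step I expect to be the main obstacle is the first one: the individual equations for $S$ and $I$ do not by themselves admit good one-sided bounds, and indeed $I_n$ may grow transiently when transmission is strong, so no direct comparison is available for $I_n$ alone. It is only the combined prey variable $S_n+I_n$, in conjunction with $\mu_n\le c_n$, that reduces the problem to the already-understood auxiliary prey equation. Once $S_n$ and $I_n$ are controlled, the comparison for $P_n$ is routine, precisely because the hypothesis $g=g_0(S,I)P$ keeps the predator recursion logistic.
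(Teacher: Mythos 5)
Your proof is correct and follows essentially the same route as the paper: bound $S_n+I_n$ by comparison with the auxiliary prey equation~\eqref{eq:aux-syst-S} using $\mu_n\le c_n$, then feed the resulting bound, together with the monotonicity/continuity of $f$ and $g_0$, into the predator equation to reduce it to a logistic-type recursion of the form~\eqref{eq:aux-syst-P}, whose solutions are eventually bounded independently of the initial data. The only cosmetic differences are that you bound $g_0$ by its maximum over a compact set rather than by monotonicity, and that you carry out the $w_n=1/\tilde y_n$ substitution explicitly instead of citing parts~\ref{Cond-C1b-aux-y}) and~\ref{Cond-C3-aux-y}) of Lemma~\ref{lemma:aux-P} for the modified equation.
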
}
\begin{proof}
Let $(S_n,I_n,P_n)$ be a solution of~\eqref{eq:principal-disc} with nonnegative initial conditions $S_q=s_q$, $I_q=i_q$ and $P_q=p_q$.
Adding the first two equations in~\eqref{eq:principal-disc} and writing $N_n=S_n+I_n$, we get
\[
\begin{split}
N_{n+1}-N_n
& =\Lambda_n-\mu_nS_{n+1}-c_nI_{n+1}-a_nf(S_{n+1},I_n,P_n)P_n\\
& \phantom{=} \ -\eta_ng_0(S_{n+1},I_n)P_nI_{n+1} \\
& \le \Lambda_n - \mu_nN_{n+1},
\end{split}
\]
since $\mu_n=\min\{\mu_n,c_n\}$. Thus
\[
N_{n+1} \le \dfrac{\Lambda_n}{1+\mu_n}+\dfrac{N_n}{1+\mu_n}.
\]
By~\ref{Cond-C1b-aux}) and~\ref{Cond-C3-aux}) in Lemma~\ref{lemma:aux-S}, we conclude that, for any given $\eps>0$, we have $S_n+I_n = N_n \le s_n \le \Lambda^u/\mu^\ell+\eps$, where $s_n$ is a solution of~\eqref{eq:aux-syst-S} with initial condition $s_q=N_q$, for $n$ sufficiently large, say  $n \ge N_1$.

By the third equation in~\eqref{eq:principal-disc} we obtain
\[
\begin{split}
P_{n+1}
& =\dfrac{P_n+r_nP_n+\gamma_n a_nf(S_{n+1},I_n,P_n)P_n+\theta_n\eta_ng_0(S_{n+1},I_n)P_nI_{n+1}}{1+b_nP_n}\\
& \le \dfrac{\left[r_n+\gamma_n a_n f(\Lambda^u/\mu^\ell+\eps,0,0)+\theta_n\eta_n g_0(0,\Lambda^u/\mu^\ell+\eps)(\Lambda^u/\mu^\ell+\eps)\right]P_n+P_n}{1+b_nP_n}
\end{split}
\]
for $n \ge N_1$. By~\ref{Cond-C1b-aux-y}) and~\ref{Cond-C3-aux-y}) in Lemma~\ref{lemma:aux-P}, we conclude that, for any given $\delta>0$, there is $N_2 \ge N_1$ such that, for all $n \ge N_2$
\[
\begin{split}
P_n
& \le \frac{\sup_{n \ge q}\left\{r_n+\gamma_n a_n f(\Lambda^u/\mu^\ell+\eps,0,0)+\theta_n\eta_n g_0(0,\Lambda^u/\mu^\ell+\eps)(\Lambda^u/\mu^\ell+\eps)\right\}}{b^\ell}+\delta\\
& \le \frac{r^u+\gamma^u a^u f(\Lambda^u/\mu^\ell+\eps,0,0)+\theta^u \eta^u g_0(0,\Lambda^u/\mu^\ell+\eps)(\Lambda^u/\mu^\ell+\eps)}{b^\ell}+\delta.
\end{split}
\]
Thus
\[
S_n+I_n+P_n \le \frac{\Lambda^u}{\mu^\ell}+\eps+\frac{r^u+\gamma^u a^u f(\Lambda^u/\mu^\ell+\eps,0,0)+\theta^u \eta^u g_0(0,\Lambda^u/\mu^\ell+\eps)(\Lambda^u/\mu^\ell+\eps)}{b^\ell}+\delta,
\]
and the result follows.
\end{proof}

To formulate our next assumption we need to consider the system
\begin{equation}\label{eq:aux2-disc}
\begin{cases}
x_{n+1}-x_n=\Lambda_n-\mu_n x_{n+1}-a_n{f(x_{n+1},0,z_n)z_n}\\
z_{n+1}-z_n=(r_n-b_nz_{n+1})z_n+\gamma_na_n{f(x_{n+1},0,z_n)z_n}
\end{cases}.
\end{equation}
which corresponds to the dynamics of the susceptible preys and the predators in the absence of infected preys.
We also need to consider the two families of auxiliary systems:
\begin{equation}\label{eq:aux2a-disc}
\begin{cases}
x_{n+1}-x_n=\Lambda_n-\mu_n x_{n+1}-a_n{f(x_{n+1},0,z_n)z_n}-\eps x_n\\
z_{n+1}-z_n=(r_n-b_nz_{n+1})z_n+\gamma_na_n{f(x_{n+1},\eps,z_n)z_n}
\end{cases}
\end{equation}
and
\begin{equation}\label{eq:aux2b-disc}
\begin{cases}
x_{n+1}-x_n=\Lambda_n-\mu_n x_{n+1}-a_n{f(x_{n+1},\eps,z_n)z_n}\\
z_{n+1}-z_n=(r_n-b_nz_{n+1})z_n+\gamma_na_n{f(x_{n+1},0,z_n)z_n}+\theta_n\eta_ng(x_{n+1},0,z_n)\eps
\end{cases}.
\end{equation}
We make the following assumptions concerning systems~\eqref{eq:aux2a-disc} and~\eqref{eq:aux2b-disc}.
\begin{enumerate}[H$1$)]
\setcounter{enumi}{7}
\item \label{Cond-4}
There is a family of nonnegative solutions $(x^*_{1,\eps,n},z^*_{1,\eps,n})$ of system~\eqref{eq:aux2a-disc}, one for each $\eps>0$ sufficiently small, such that each solution in the family is globally asymptotically stable in a set containing $\{(x,y) \in (R_0^+)^2: x,z>0\}$ and the function $\eps \mapsto (x^*_{1,\eps,n},z^*_{1,\eps,n})$ is continuous.
\item \label{Cond-5}
There is a family of nonegative solutions $(x^*_{2,\eps,n},z^*_{2,\eps,n})$ of system~\eqref{eq:aux2b-disc}, one for each $\eps>0$ sufficiently small, such that each solution in the family is globally asymptotically stable in a set containing $\{(x,y) \in (R_0^+)^2: x,z>0\}$ and the function $\eps \mapsto (x^*_{2,\eps,n},z^*_{2,\eps,n})$ is continuous.
\end{enumerate}
We denote the element of the family of solutions in~{H8)}  (or~{H9)}) with $\eps=0$, by $(x^*_n,z^*_n)$. For each solution $(x^*_n,z^*_n)$ of~\eqref{eq:aux2a-disc} with $\eps=0$ and initial conditions $(x_0,y_0)$ with $x_0>0$ and $z_0>0$, and each $\lambda \in \N$, define the number
\begin{equation}\label{eq:liminf-threshold}
\cR^\ell(\lambda) = \liminf_{n \to \, +\infty} \prod_{k=n}^{n+\lambda} \dfrac{1+\beta_kx^*_{k+1}}{1+c_k+\eta_k g(x^*_k,0,z^*_k)}
\end{equation}
and for each solution $(s_n^*)$ of~\eqref{eq:aux-syst-S} with $s_0>0$, each solution $(y_n^*)$ of~\eqref{eq:aux-syst-P} with $y_0>0$ and each $\lambda \in \N$, define the number
\begin{equation}\label{eq:limsup-threshold}
\cR^u(\lambda) = \limsup_{n \to \, +\infty} \prod_{k=n}^{n+\lambda} \dfrac{1+\beta_ks^*_{k+1}}{1+c_k+\eta_k  g(s^*_k,0,y^*_k)}
\end{equation}
These numbers will be useful in obtaining conditions for permanence and extinction and, in some sense, play the role of upper and lower bounds for the basic reproductive number in this general context. In the following lemma we prove that the numbers above are independent of the particular positive solutions of~\eqref{eq:aux-syst-S}, ~\eqref{eq:aux-syst-P} and~\eqref{eq:aux2a-disc} considered.

\begin{lemma} \label{lema:indep}
The numbers $\cR^\ell(\lambda)$ and $\cR^u(\lambda)$ are independent of the particular solutions $(s_n^*)$ of~\eqref{eq:aux-syst-S} with $s_0>0$,  $(y_n^*)$ of~\eqref{eq:aux-syst-P} with $y_0>0$ and $(x_n^*,z_n^*)$ of~\eqref{eq:aux2-disc} with $x_0>0$ and $z_0>0$.
\end{lemma}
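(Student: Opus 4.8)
The plan is to exploit the attractivity properties established in Lemmas~\ref{lemma:aux-S} and~\ref{lemma:aux-P} together with hypothesis~H\ref{Cond-4}), reducing the statement to the following elementary principle: if two families of factors converge to one another termwise and are uniformly bounded, then their products over a window of fixed length $\lambda+1$ differ by a quantity tending to zero, so that the corresponding $\liminf$'s and $\limsup$'s coincide. I treat $\cR^u(\lambda)$ first and then indicate the identical argument for $\cR^\ell(\lambda)$.

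Fix two solutions $(s_n^*),(\tilde s_n^*)$ of~\eqref{eq:aux-syst-S} with $s_0^*,\tilde s_0^*>0$ and two solutions $(y_n^*),(\tilde y_n^*)$ of~\eqref{eq:aux-syst-P} with $y_0^*,\tilde y_0^*>0$. By the global uniform attractivity in Lemma~\ref{lemma:aux-S}~\ref{Cond-C3-aux}) and Lemma~\ref{lemma:aux-P}~\ref{Cond-C3-aux-y}) we have $|s_n^*-\tilde s_n^*|\to 0$ and $|y_n^*-\tilde y_n^*|\to 0$ as $n\to+\infty$. By the boundedness in the same lemmas, the sequences $(s_n^*),(\tilde s_n^*)$ lie in a compact interval $[0,S_{\max}]$ and $(y_n^*),(\tilde y_n^*)$ in $[0,P_{\max}]$; since $g$ is $C^1$ (hypothesis~H\ref{cond-f})) it is uniformly continuous on the box $[0,S_{\max}]\times\{0\}\times[0,P_{\max}]$, and the coefficient sequences $(\beta_k),(c_k),(\eta_k)$ are bounded by~H\ref{Cond-1}). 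Writing
\[
F_k=\frac{1+\beta_k s^*_{k+1}}{1+c_k+\eta_k g(s^*_k,0,y^*_k)},\qquad
\tilde F_k=\frac{1+\beta_k \tilde s^*_{k+1}}{1+c_k+\eta_k g(\tilde s^*_k,0,\tilde y^*_k)},
\]
the denominators are $\ge 1$ and the numerators are uniformly bounded, so there is $M>0$ with $0<F_k,\tilde F_k\le M$; moreover $|F_k-\tilde F_k|\to 0$, combining $\beta^u|s^*_{k+1}-\tilde s^*_{k+1}|\to 0$ with the uniform-continuity estimate for $|g(s^*_k,0,y^*_k)-g(\tilde s^*_k,0,\tilde y^*_k)|$.

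It remains to pass from termwise to product convergence. Using the telescoping identity $\prod_k a_k-\prod_k b_k=\sum_k(\prod_{j<k}a_j)(a_k-b_k)(\prod_{j>k}b_j)$ over the $\lambda+1$ indices $k=n,\dots,n+\lambda$, together with the bound $M$, we obtain
\[
\left|\prod_{k=n}^{n+\lambda}F_k-\prod_{k=n}^{n+\lambda}\tilde F_k\right|
\le (\lambda+1)\,M^{\lambda}\sup_{n\le k\le n+\lambda}|F_k-\tilde F_k|.
\]
Because $|F_k-\tilde F_k|\to 0$ and the window $[n,n+\lambda]$ recedes to infinity, the right-hand side tends to $0$ as $n\to+\infty$; hence the two product sequences have the same $\limsup$, which is exactly the asserted independence of $\cR^u(\lambda)$.

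The argument for $\cR^\ell(\lambda)$ is identical once the attractivity input is changed: any two solutions $(x^*_n,z^*_n),(\tilde x^*_n,\tilde z^*_n)$ of~\eqref{eq:aux2-disc} with positive initial data both converge to the globally asymptotically stable $\eps=0$ member of the family of~\eqref{eq:aux2a-disc} furnished by~H\ref{Cond-4}), so $|x^*_n-\tilde x^*_n|\to 0$ and $|z^*_n-\tilde z^*_n|\to 0$; one then repeats the factorwise and telescoping estimates with $\liminf$ in place of $\limsup$. The \emph{only delicate point} is the interplay between the moving window and the asymptotic convergence: one must verify that $\sup_{n\le k\le n+\lambda}|F_k-\tilde F_k|\to 0$, which holds precisely because the termwise convergence is eventually uniform and the window length $\lambda+1$ is fixed. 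Uniform boundedness of the factors (so that the telescoping sum stays controlled) and uniform continuity of $g$ on the common compact set are the two ingredients that make this work.
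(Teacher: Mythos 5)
Your proof is correct, and it runs on the same two inputs as the paper's own argument: attractivity (Lemma~\ref{lemma:aux-S}~\ref{Cond-C3-aux}), Lemma~\ref{lemma:aux-P}~\ref{Cond-C3-aux-y}), and H\ref{Cond-4})) to force any two admissible solutions together, and the regularity of $g$ together with boundedness of the coefficients to convert closeness of solutions into closeness of the factors $F_k$. Where you genuinely differ is in the passage from factors to products. The paper compares the products multiplicatively: each factor of one product is dominated by the corresponding factor of the other up to an additive $\delta A$ term and a multiplicative $(1+\delta B)$ term, the product is expanded with binomial coefficients into a $\delta$-dependent correction, one obtains the one-sided inequality $\cR^\ell(\lambda,x_2^*,z_2^*)\le(1+\delta B)^\lambda\bigl(\cR^\ell(\lambda,x_1^*,z_1^*)+O(\delta)\bigr)$, lets $\delta\to 0$, and then must repeat the whole estimate with the roles of the two solutions interchanged to get equality. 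Your telescoping identity replaces all of that bookkeeping with the single symmetric bound $\bigl|\prod_{k=n}^{n+\lambda}F_k-\prod_{k=n}^{n+\lambda}\tilde F_k\bigr|\le(\lambda+1)M^{\lambda}\sup_{n\le k\le n+\lambda}|F_k-\tilde F_k|\to 0$, which gives equality of the $\limsup$s and $\liminf$s in one stroke; this is cleaner and avoids both the binomial expansion and the interchange-of-roles step. One loose end, which your write-up shares with the paper rather than introduces: for $\cR^\ell(\lambda)$ the factors involve a positive solution $(x^*_n,z^*_n)$ of~\eqref{eq:aux2-disc}, whose boundedness is not supplied by Lemmas~\ref{lemma:aux-S} and~\ref{lemma:aux-P} (these concern the scalar equations) nor stated explicitly in H\ref{Cond-4}); you use it implicitly through ``uniform boundedness of the factors,'' exactly as the paper does through its constant $C$, which involves $\sup_n x^*_{1,n}$. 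It can be patched by comparing the first equation of~\eqref{eq:aux2-disc} with~\eqref{eq:aux-syst-S} to bound $x^*_n$, with a similar comparison for $z^*_n$, so this is a presentational point in both arguments, not a flaw peculiar to yours.
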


\begin{proof} Write $\cR^\ell(\lambda,x,z)$ for the number in~\eqref{eq:liminf-threshold} corresponding to the solution $(x,z)=(x_n^*,z_n^*)_{n \in \N}$ of~\eqref{eq:aux2-disc} with $x_0>0$ and $z_0>0$.

Let $(x_1^*,z_1^*)=(x^*_{1,n},z^*_{1,n})_{n \in \N}$ and $(x_2^*,z_2^*)=(x^*_{2,n},z^*_{2,n})_{n \in \N}$ be distinct solutions of~\eqref{eq:aux2-disc} with $x_{1,0}>0,z_{1,0}>0,x_{2,0}>0$ and $z_{2,0}>0$.

Let $\delta>0$ be sufficiently small. By assumptions {H8)} (or {H9)}), for $k \ge N$ (where $N \in \N$) sufficiently large, we have
$$x^*_{1,k} -\delta \le x_{2,k}^* \le x_{1,k}^*+ \delta \quad \text{ and } \quad z^*_{1,k} -\delta \le z_{2,k}^* \le z_{1,k}^*+ \delta.$$
Additionally, by H\ref{cond-f}), there is $c>0$ such that, for sufficiently large $k$,
$$|g(x^*_{1,k},0,z^*_{1,k})-g(x^*_{1,k},0,z^*_{2,k}-\delta)|\le c |z^*_{1,k}-z^*_{2,k}+\delta| \le 2c\delta $$
and
$$|g(x^*_{2,k},0,z^*_{1,k})-g(x^*_{1,k},0,z^*_{1,k})|\le c |z^*_{1,k}-z^*_{2,k}| \le c\delta.$$
Thus, for $n \ge N$
\begin{equation}\label{eq:estim-prod-le0}
\begin{split}
& \quad \prod_{k=n}^{n+\lambda} \dfrac{1+\beta_kx^*_{2,k+1}}{1+c_k+\eta_kg(x^*_{2,k},0,z^*_{2,k})}\\
& \le \prod_{k=n}^{n+\lambda} \dfrac{1+\beta_kx^*_{1,k+1}+\delta\beta_k}{1+c_k+\eta_kg(x^*_{2,k},0,z^*_{1,k}-\delta)}\\
& \le \prod_{k=n}^{n+\lambda} \dfrac{1+\beta_kx^*_{1,k+1}+\delta\beta_k}{1+c_k+\eta_kg(x^*_{1,k},0,z^*_{1,k})}\dfrac{1+c_k+\eta_kg(x^*_{1,k},0,z^*_{1,k})}{1+c_k+\eta_kg(x^*_{2,k},0,z^*_{1,k}-\delta)}\\
& = \prod_{k=n}^{n+\lambda} \left(\dfrac{1+\beta_kx^*_{1,k+1}}{1+c_k+\eta_kg(x^*_{1,k},0,z^*_{1,k})}+\dfrac{\delta\beta_k}{1+c_k+\eta_kg(x^*_{1,k},0,z^*_{1,k})}\right)\times\\
& \phantom{==} \times
\left(1+\dfrac{3c\delta\eta_k}{1+c_k+\eta_kg(x^*_{2,k},0,z^*_{2,k}-\delta)}\right)\\
& \le (1+\delta B)^\lbd \prod_{k=n}^{n+\lambda} \left(\dfrac{1+\beta_kx^*_{1,k+1}}{1+c_k+\eta_kg(x^*_{1,k},0,z^*_{1,k})}+\delta A\right)
\\
& \le (1+\delta B)^\lbd \left(\prod_{k=n}^{n+\lambda} \dfrac{1+\beta_kx^*_{1,k+1}}{1+c_k+\eta_kg(x^*_{1,k},0,z^*_{1,k})}
+  \sum_{j=1}^{\lambda+1} \binom{\lambda+1}{j}\delta^j  C^{\lambda+1-j} A^j\right),
\end{split}
\end{equation}
where
$$A=\dfrac{\beta^u}{1+c^\ell+\eta^\ell (g(x^*_{1,k},0,z^*_{1,k}))^\ell}, \quad B=\dfrac{2c\eta^u}{1+c^\ell+\eta^\ell (g(x^*_{1,k},0,z^*_{2,k}-\delta))^\ell}$$
and
$$C=\dfrac{1+\beta^u (x^*_1)^u}{1+c^\ell+\eta^\ell (g(x^*_{1,k},0,z^*_{1,k}))^\ell}.$$

By~\eqref{eq:estim-prod-le0}, we conclude that
$$\cR^\ell(\lambda,x^*_2,z^*_2) \le (1+\delta B)^\lbd \left(\cR^\ell(\lambda,x^*_1,z^*_1)
+  \sum_{j=1}^{\lambda+1} \binom{\lambda+1}{j}\delta^j  C^{\lambda+1-j} A^j\right).$$

By the arbitrariness of $\delta>0$, we conclude that $\cR^\ell(\lambda,x^*_2,z^*_2) \le \cR^\ell(\lambda,x^*_1,z^*_1)$ and, interchanging the roles of $(x^*_1,z^*_1)$ and $(x^*_2,z^*_2)$ it is immediate that
$\cR^\ell(\lambda,x^*_2,z^*_2) \ge \cR^\ell(\lambda,x^*_1,z^*_1)$. Thus $\cR^\ell(\lambda,x^*_2,z^*_2) = \cR^\ell(\lambda,x^*_1,z^*_1)$.

Now write $\cR^u(\lambda,s,y)$ for the number in~\eqref{eq:limsup-threshold} corresponding to the solutions $s=(s_n^*)$ of~\eqref{eq:aux-syst-S} with $s_0>0$ and  $y=(y_n^*)$ of~\eqref{eq:aux-syst-P} with $y_0>0$.

Let again $\delta>0$ be sufficiently small. Additionally, let $s_1^*=(s^*_{1,n})$ and $s_2^*=(s^*_{2,n})$ be distinct solutions of~\eqref{eq:aux-syst-S} and $y_1^*=(y^*_{1,n})$ and $y_2^*=(y^*_{2,n})$ be distinct solutions of~\eqref{eq:aux-syst-P}. By \ref{Cond-C3-aux}) in Lemma~\ref{lemma:aux-S} and \ref{Cond-C3-aux-y}) in Lemma~\ref{lemma:aux-P}, we have
$$s^*_{1,k} -\delta \le s_{2,k}^* \le s_{2,k}^*+ \delta \quad \text{ and } \quad y^*_{1,k} -\delta \le y_{2,k}^* \le y_{1,k}^*+ \delta$$
for $k \ge N$ sufficiently large. There is $c>0$ such that
$$|g(s^*_{1,k},0,y^*_{1,k})-g(s^*_{1,k},0,y^*_{2,k}-\delta)|\le c |y^*_{1,k}-y^*_{2,k}+\delta| \le 2c\delta$$
and
$$|g(s^*_{2,k},0,y^*_{1,k})-g(s^*_{1,k},0,y^*_{1,k})|\le c |s^*_{2,k}-s^*_{1,k}| \le c\delta.$$
Therefore
\begin{equation}\label{eq:estim-prod-le}
\begin{split}
& \quad \prod_{k=n}^{n+\lambda} \dfrac{1+\beta_ks^*_{2,k+1}}{1+c_k+\eta_kg(s_{2,k}^*,0,y^*_{2,k})}\\
& \le \prod_{k=n}^{n+\lambda} \dfrac{1+\beta_ks^*_{1,k+1}+\delta\beta_k}{1+c_k+\eta_kg(s_{2,k}^*,0,y^*_{1,k}-\delta)}\\
& \le \prod_{k=n}^{n+\lambda} \dfrac{1+\beta_ks^*_{1,k+1}+\delta\beta_k}{1+c_k+\eta_kg(s_{1,k}^*,0,y^*_{1,k})}\dfrac{1+c_k+\eta_kg(s_{1,k}^*,0,y^*_{1,k})}{1+c_k+\eta_kg(s_{2,k}^*,0,y^*_{1,k}-\delta)}\\
& \le \prod_{k=n}^{n+\lambda} \left(\dfrac{1+\beta_ks^*_{1,k+1}}{1+c_k+\eta_kg(s_{1,k}^*,0,y^*_{1,k})}+\dfrac{\delta\beta_k}{1+c_k+\eta_kg(s_{1,k}^*,0,y^*_{1,k})}\right)\times\\
& \phantom{=} \times\left(1+\dfrac{3c\delta\eta_k}{1+c_k+\eta_kg(s_{2,k}^*,0,y^*_{1,k}-\delta)}\right)\\
& \le (1+\delta B)^\lbd \prod_{k=n}^{n+\lambda} \left(\dfrac{1+\beta_ks^*_{1,k+1}}{1+c_k+\eta_kg(s_{1,k}^*,0,y^*_{1,k})}+\delta A\right)
\\
& \le (1+\delta B)^\lbd \left(\prod_{k=n}^{n+\lambda} \dfrac{1+\beta_ks^*_{1,k+1}}{1+c_k+\eta_kg(s_{1,k}^*,0,y^*_{1,k})}
+  \sum_{j=1}^{\lambda+1} \binom{\lambda+1}{j}\delta^j  C^{\lambda+1-j} A^j\right),
\end{split}
\end{equation}
for $n \ge N$, where
$$A=\dfrac{\beta^u}{1+c^\ell+\eta^\ell (g(s_{1,k}^*,0,y^*_{1,k}))^\ell}, \quad B=\dfrac{2c\eta^u}{1+c^\ell+\eta^\ell (g(s_{2,k}^*,0,y^*_{1,k}-\delta))^\ell}$$
and
$$C=\dfrac{1+\beta^u (s^*_1)^u}{1+c^\ell+\eta^\ell (g(s_{1,k}^*,0,y^*_{1,k}))^\ell}.$$

By~\eqref{eq:estim-prod-le}, we conclude that
$$\cR^\ell(\lambda,s^*_2,y^*_2) \le (1+\delta B)^\lbd \left(\cR^\ell(\lambda,s^*_1,y^*_1)
+  \sum_{j=1}^{\lambda+1} \binom{\lambda+1}{j}\delta^j  C^{\lambda+1-j} A^j,\right)$$

By the arbitrariness of $\eps>0$, we conclude that $\cR^\ell(\lambda,s^*_2,y^*_2) \le \cR^\ell(\lambda,s^*_1,y^*_1)$ and, interchanging the roles of $(s^*_1,y^*_1)$ and $(s^*_2,y^*_2)$ it is immediate that
$\cR^\ell(\lambda,s^*_2,y^*_2) \ge \cR^\ell(\lambda,s^*_1,y^*_1)$. Thus $\cR^\ell(\lambda,s^*_2,y^*_2) = \cR^\ell(\lambda,s^*_1,y^*_1)$.

The result is proved.
\end{proof}

{\section{Extinction and strong persistence}\label{sec2}}

In this section we establish our main results on extinction and persistence. To obtain our result on extinction we must make some additional assumptions on the function $g$. In spite of this, it is easy to see that the usual growth rates still fulfill these assumptions.

\begin{theorem}\label{teo:Main-ext}
Assume that $g(S+I,0,P)\le g(S,I,P)$. If there is $\lambda \in \N$ such that $\cR^u(\lambda)<1$ then the infectives $(I_n)$ go to extinction in system~\eqref{eq:principal-disc}. Furthermore, if $a \equiv 0$ and $g(S,I,P)=g_0(S,I)P$, any disease-free solution $(s^*_n,0,y^*_n)$ of~\eqref{eq:principal-disc}, where $(s^*_n)$ is a solution of~\eqref{eq:aux-syst-S} and $(y^*_n)$ is a solution of~\eqref{eq:aux-syst-P}, is globally asymptotically attractive.
\end{theorem}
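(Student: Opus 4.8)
The plan is to establish extinction of the infectives in two steps. First I would prove that $I_n \to 0$ using the threshold condition $\cR^u(\lambda) < 1$, and then, under the additional hypotheses $a \equiv 0$ and $g(S,I,P)=g_0(S,I)P$, show that the remaining variables $(S_n,P_n)$ are asymptotically attracted to a disease-free solution.

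For the extinction part, the natural approach is to control the growth of $I_n$ via the second equation of~\eqref{eq:principal-disc}, rewritten in the explicit form
\[
I_{n+1}=\dfrac{1+\beta_n S_{n+1}}{1+c_n+\eta_n g(S_n,I_n,P_n)}\,I_n.
\]
The key comparison is to dominate $\beta_n S_{n+1}$ from above and $\eta_n g(S_n,I_n,P_n)$ from below by quantities expressed through solutions of the decoupled auxiliary equations~\eqref{eq:aux-syst-S} and~\eqref{eq:aux-syst-P}. Here the hypothesis $g(S+I,0,P)\le g(S,I,P)$ is crucial: together with the monotonicity in~H\ref{cond-f}), since $N_n = S_n+I_n$ satisfies $N_{n+1}\le \Lambda_n/(1+\mu_n)+N_n/(1+\mu_n)$ (as in Lemma~\ref{lemma:region}), I can use the attractivity from Lemma~\ref{lemma:aux-S}~\ref{Cond-C3-aux}) to bound $S_{n+1}$ above by $s^*_{n+1}+\eps$ for large $n$, while simultaneously bounding $P_n$ above by $y^*_n+\eps$ via Lemma~\ref{lemma:aux-P}. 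The monotonicity assumption $g(S+I,0,P)\le g(S,I,P)$ lets me replace $g(S_n,I_n,P_n)$ from below by $g(S_n+I_n,0,P_n)$ and then, using that $x\mapsto g(x,0,z)$ is nonincreasing and $z\mapsto g(x,0,z)$ is nondecreasing together with the upper bounds just obtained, by $g(s^*_n,0,y^*_n)$ up to an $\eps$-perturbation controlled by continuity of $g$. This yields, for $n$ large,
\[
I_{n+\lambda+1}\le I_n\prod_{k=n}^{n+\lambda}\dfrac{1+\beta_k s^*_{k+1}+\eps'}{1+c_k+\eta_k g(s^*_k,0,y^*_k)-\eps'}.
\]
Since $\cR^u(\lambda)<1$, for $\eps'$ small the $\limsup$ of this product is still strictly below $1$, so there is $q<1$ and $N$ with the product $\le q$ for all $n\ge N$; iterating over blocks of length $\lambda+1$ forces $I_n\to 0$ geometrically.

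For the global attractivity statement, once $I_n\to 0$ and with $a\equiv 0$, the $S$-equation reduces to $S_{n+1}=(\Lambda_n+S_n)/(1+\mu_n+\beta_n I_n)$, which is exactly the perturbed equation~\eqref{eq:sist-aux-f-2} with $\psi_n=\beta_n I_n\to 0$; Lemma~\ref{lemma:aux-S}~\ref{cond-4-aux}) then gives $|S_n-s^*_n|\to 0$. With $a\equiv 0$ and $g=g_0(S,I)P$, the $P$-equation becomes $P_{n+1}=\bigl((1+r_n)P_n+\theta_n\eta_n g_0(S_{n+1},I_n)P_n I_{n+1}\bigr)/(1+b_nP_n)$, i.e. $y$-equation~\eqref{eq:aux-syst-P} perturbed in the numerator by a term of order $I_{n+1}\to 0$ (using boundedness of $P_n$ from~H\ref{Cond-7a}) or Lemma~\ref{lemma:region}); casting this as~\eqref{eq:sist-aux-g-y2} with $h_n\to 0$ and applying Lemma~\ref{lemma:aux-P}~\ref{cond-4-aux-y}) yields $|P_n-y^*_n|\to 0$. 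By Lemma~\ref{lema:indep} the choice of disease-free solution is immaterial.

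\textbf{The main obstacle} I anticipate is making the lower bound on $\eta_n g(S_n,I_n,P_n)$ fully rigorous: the chain of substitutions from $g(S_n,I_n,P_n)$ down to $g(s^*_n,0,y^*_n)$ mixes a monotonicity hypothesis in the infected variable ($g(S+I,0,P)\le g(S,I,P)$) with the separate monotonicities in~H\ref{cond-f}), and one must track the perturbation errors through a product of $\lambda+1$ factors without the errors accumulating catastrophically. The estimate of the $\eps$-perturbed product and the argument that $\limsup < 1$ is preserved under small perturbations—essentially the same binomial-type bookkeeping already carried out in the proof of Lemma~\ref{lema:indep}—is where the technical care is concentrated; the attractivity conclusions are then routine applications of the auxiliary lemmas.
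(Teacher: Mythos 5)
Your overall architecture matches the paper's: extinction via a product estimate on the explicit form of the $I$-equation, with $S_n+I_n$ dominated by a solution of~\eqref{eq:aux-syst-S}, followed by perturbation arguments for the disease-free attractivity. The attractivity half of your proposal (casting the $S$-equation as~\eqref{eq:sist-aux-f-2} with $\psi_n=\beta_nI_n\to 0$ and the $P$-equation as~\eqref{eq:sist-aux-g-y2} with $h_n\to 0$, then invoking~\ref{cond-4-aux}) of Lemma~\ref{lemma:aux-S} and~\ref{cond-4-aux-y}) of Lemma~\ref{lemma:aux-P}) is correct and in fact slightly cleaner than the paper's treatment of $S$, which iterates the inequality for $U_n=S_n-s_n^*$ by hand; the treatment of $P$ is the same in both.

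The extinction half, however, contains a genuine error: your bound on the predator goes in the wrong direction. To get $I_{n+1}\le q\,I_n$ over blocks you must bound the denominator $1+c_n+\eta_n g(S_n,I_n,P_n)$ from \emph{below}, i.e.\ bound $g$ from below. After the substitutions $g(S_n,I_n,P_n)\ge g(S_n+I_n,0,P_n)\ge g(s_n^*+\eps,0,P_n)$, the monotonicity in H\ref{cond-f}) --- $z\mapsto g(x,0,z)$ is \emph{nondecreasing} --- means you need a \emph{lower} bound $P_n\ge y_n^*-\eps$; the upper bound $P_n\le y_n^*+\eps$ that you invoke yields an upper bound on $g$ and gives nothing. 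Moreover, that upper bound is not even available: Lemma~\ref{lemma:aux-P} concerns solutions of the logistic equation~\eqref{eq:aux-syst-P}, whereas $P_n$ solves the third equation of~\eqref{eq:principal-disc}, whose extra terms $\gamma_na_nf(\cdot)P_n+\theta_n\eta_ng(\cdot)I_{n+1}$ are nonnegative, so the logistic comparison solution sits \emph{below} $P_n$, not above it. What actually follows --- and is what the paper uses --- is precisely $P_{n+1}\ge (1+r_n)P_n/(1+b_nP_n)$, hence $P_n\ge y_n\ge y_n^*-\eps$ for large $n$, where $(y_n)$ is the solution of~\eqref{eq:aux-syst-P} with $y_0=P_0$ and the attractivity~\ref{Cond-C3-aux-y}) of Lemma~\ref{lemma:aux-P} is applied to $(y_n)$, not to $(P_n)$. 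Once the direction of this bound is corrected, the rest of your bookkeeping (the $\eps'$-perturbed product staying below $1$ and the geometric decay over blocks of length $\lambda+1$) goes through exactly as in the paper.
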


\begin{proof}
Since $\cR^u(\lbd)<1$, given $\delta_1>0$ sufficiently small, there are $\delta_0>0$ and $N \in \N$ such that
\begin{equation}\label{eq:condi-ext}
\prod_{k=n}^{n+\lambda} \dfrac{1+\beta_k(s^*_{k+1}+\delta)}{1+c_k+\eta_kg(s^*_k+\delta,0,y^*_k-\delta))}<1-\delta_1,
\end{equation}
for $n \ge N$ and all positive $\delta\le \delta_0$. Let $N_n=S_n+I_n$. Since $\mu_n \le c_n$, by the first two equations in~\eqref{eq:principal-disc}, we conclude that
  $$N_{n+1}-N_n \le \Lambda_n-\mu_n N_{n+1} \quad \Leftrightarrow \quad N_{n+1} \le \frac{\Lambda_n}{1+\mu_n}+\frac{N_n}{1+\mu_n} $$
and thus $S_n, I_n \le N_n \le s_n$, where $(s_n)$ is any solution of~\eqref{eq:aux-syst-S} with $s_0=S_0$. By~\ref{Cond-C3-aux}) in Lemma~\ref{lemma:aux-S} we have $|s_n-s_n^*|\le \delta_0$ for sufficiently large $n$, say $n\ge N_1 \ge N$. Thus
$$S_n, I_n \le S_n+I_n=N_n \le s_n \le s_n^*+\delta_0,$$
for $n\ge N_1$.

By the third equation in~\eqref{eq:principal-disc}, we conclude that
  $$P_{n+1}-P_n\ge (r_n-b_nP_{n+1})P_n \quad \Leftrightarrow \quad P_{n+1}\ge \frac{r_nP_n+P_n}{1+b_nP_n}$$
and thus $P_n\ge y_n$, where $(y_n)$ is any solution of~\eqref{eq:aux-syst-S} with $y_0=P_0$. By~\ref{Cond-C3-aux}) in Lemma~\ref{lemma:aux-P} we have $|y_n-y_n^*|\le \delta_0$ for sufficiently large $n$, say $n\ge N_2 \ge N_1$. Thus
$$P_n\ge y_n \ge y_n^*-\delta_0,$$
for $n\ge N_2$.
 Using our hypothesis, by the second equation in~\eqref{eq:principal-disc} and~\eqref{eq:condi-ext},
\[
\begin{split}
I_{n+1}
& =\dfrac{\beta_nS_{n+1}I_n+I_n}{1+\eta_ng(S_n,I_n,P_n)+c_n}\\
& \le\dfrac{\beta_nS_{n+1}I_n+I_n}{1+\eta_ng(S_n+I_n,0,P_n)+c_n}\\
& \le \dfrac{\beta_n(s^*_{n+1}+\delta_0)+1}{1+c_n+\eta_ng(s_n^*+\delta_0,0,y^*_n-\delta_0)}\, I_n\\
&<(1-\delta_1)\,I_{n-\lambda-1}\\
&<\cdots<(1-\delta_1)^{\lfloor n/(\lambda+1)\rfloor} \, I_{n-\lfloor n/(\lambda+1)\rfloor(\lambda+1)}\\
& \le d \left((1-\delta_1)^{1/(\lambda+1)}\right)^n,
\end{split}
\]
for $n \ge N_2$, where $\displaystyle d=\max_{j=0,\ldots,\lambda} I_j$. We conclude that $I_n\to 0$ as $n\to +\infty$ and we have extinction of the infectives.

Assume now that $a \equiv 0$ and $g(S,I,P)=g_0(S,I)P$, let $((S_n,I_n,P_n))$ be any solution of~\eqref{eq:principal-disc} and consider the sequence $((s^*_n,0,y_n^*))$, where $(s^*_n)$ is a solution of~\eqref{eq:aux-syst-S} and $(y^*_n)$ is a solution of~\eqref{eq:aux-syst-P}.

Since $I_n\to 0$ as $n \to +\infty$, given $\delta>0$ there is $T\in\N$ such that $I_n<\delta$ for $n \ge T$. Letting $U_n=S_n-s_n^*$, we have, by the first equation in~\eqref{eq:principal-disc},
 $$U_{n+1}-U_n=-\mu_n U_{n+1} -\beta_nS_{n+1}I_n,$$
for $n \ge T$. Thus, by~\ref{Cond-C3-aux-y}) in Lemma~\ref{lemma:aux-S} and by Lemma~\ref{lemma:region}, we have
 $$-\beta^u L \delta<(1+\mu_n)U_{n+1}-U_n < 0$$
for $n$ sufficiently large.

We get, for $\delta>0$ sufficiently small
\[
\begin{split}
U_{n+1}
& > -\dfrac{\beta^u L \delta}{1+\mu_n}+\dfrac{1}{1+\mu_n}U_n\\
& > -\dfrac{\beta^u L \delta}{1+\mu_n}+\dfrac{1}{1+\mu_n}\left(-\dfrac{\beta^u K \delta}{1+\mu_{n-1}}+\dfrac{1}{1+\mu_{n-1}}U_{n-1}\right)\\
& > \cdots\\
& > \left(\prod_{m=0}^{n-1}\dfrac{1}{1+\mu_m}\right)U_0 - \sum_{m=0}^{n-1} (\beta^u L \delta)^{m+1} \left(\prod_{k=m}^{n-1}\dfrac{1}{1+\mu_k}\right)\\
& > \left(\prod_{m=0}^{n-1}\dfrac{1}{1+\mu_m}\right)U_0 - \delta \beta^u L \sum_{m=0}^{n-1} K \theta^{n-m}\\
& > \left(\prod_{m=0}^{n-1}\dfrac{1}{1+\mu_m}\right)U_0 - \dfrac{\beta^u LK\theta}{1-\theta}\,\delta\\
& > - \dfrac{\beta^u LK\theta}{1-\theta}\,\delta.
\end{split}
\]
Similarly,
\[
U_{n+1} < \dfrac{1}{1+\mu_n}U_n < \left(\prod_{m=0}^{n-1}\dfrac{1}{1+\mu_m}\right)U_0.
\]
Since
$$\prod_{m=0}^{n-1}\dfrac{1}{1+\mu_m} \to 0 \quad \text{as} \quad {n \to +\infty},$$
given $\delta>0$, we have $|U_{n+1}|<M\delta$, where $M=\beta^uLK\theta/(1-\theta)$, for sufficiently large $n$. We conclude that $|U_n|\to 0$ as $n \to +\infty$ and thus
\begin{equation}\label{eq:Sn-to-Sn*}
S_n\to s_n^* \quad \text{as} \quad n \to +\infty.
\end{equation}

By the third equation in~\eqref{eq:principal-disc}, we have, for sufficiently large $n$,
\[
\begin{split}
P_{n+1}-P_n
& =(r_n-b_nP_{n+1})P_n+\theta_n\eta_ng_0(S_n,I_n)P_nI_{n+1}\\
& \le (r_n-b_nP_{n+1})P_n+\theta_n\eta_ng_0(S_n+I_n,0)P_nI_{n+1}\\
& \le (r_n-b_nP_{n+1})P_n+\theta_n\eta_ng_0(s_n^*+2\delta,0)P_n\delta
\end{split}
\]
and thus
$$(r_n-b_nP_{n+1})P_n\le P_{n+1}-P_n\le (r_n+\theta^u\eta^ug_0(s_n^*+2\delta,0)\delta-b_nP_{n+1})P_n.$$
We conclude that
$$\dfrac{r_nP_n+P_n}{1+b_nP_n} \le P_{n+1}\le \dfrac{\left(r_n+\theta^u\eta^ug_0(s_n^*+2\delta,0)\delta\right)P_n+P_n}{1+b_nP_n}.$$
By~\ref{cond-3-aux-y}) in Lemma~\ref{lemma:aux-P}, we have $|P_n-y_n^*| \to 0$ as $n \to +\infty$.
The result follows since $(S_n,I_n,P_n)\to(s^*_n,0,y^*_n)$ as $n \to +\infty$.
\end{proof}

\begin{theorem}\label{teo:Main-ext}
If there is a constant $\lambda \in \N$ such that $\cR^\ell(\lambda)>1$ then the infectives $(I_n)$ are strong persistent in system~\eqref{eq:principal-disc}.
\end{theorem}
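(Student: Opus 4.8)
The plan is to mirror the extinction theorem in reverse: first establish \emph{weak} uniform persistence, namely that $\limsup_n I_n$ is bounded away from $0$ uniformly, and then upgrade this to the \emph{strong} uniform persistence $\liminf_n I_n\ge m>0$ claimed. The first task is to turn the hypothesis $\cR^\ell(\lambda)>1$ into a workable growth inequality. Using the continuity of $\eps\mapsto(x^*_{1,\eps,n},z^*_{1,\eps,n})$ and $\eps\mapsto(x^*_{2,\eps,n},z^*_{2,\eps,n})$ from H\ref{Cond-4}) and H\ref{Cond-5}), the continuity and boundedness of $g$, and a product estimate analogous to the one carried out in the proof of Lemma~\ref{lema:indep}, I would fix $\eps,\delta>0$ small, $\delta_1>0$ and $N\in\N$ such that
\[
\prod_{k=n}^{n+\lambda} \frac{1+\beta_k(x^*_{1,\eps,k+1}-\delta)}{1+c_k+\eta_k\, g(x^*_{1,\eps,k}-\delta,0,z^*_{2,\eps,k}+\delta)} > 1+\delta_1
\]
for all $n\ge N$; since as $\eps,\delta\to0$ the displayed product tends to the one defining $\cR^\ell(\lambda)$, such a choice exists.

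For weak persistence, suppose toward a contradiction that $I_k\le\eps$ for $N\le k\le M$. Dropping the nonpositive term $-\beta_nS_{n+1}I_n$ and using that $f$ is nonincreasing in its second argument (H\ref{cond-f})), one checks that $(S_n,P_n)$ is a supersolution of~\eqref{eq:aux2a-disc} and a subsolution of~\eqref{eq:aux2b-disc}; comparison together with the global uniform attractivity and continuity in $\eps$ in H\ref{Cond-4}) and H\ref{Cond-5}) then yields, after a fixed transient, $S_n\ge x^*_{1,\eps,n}-\delta$ and $P_n\le z^*_{2,\eps,n}+\delta$. Because $g$ is nonincreasing in its first two arguments and nondecreasing in its third (H\ref{cond-f})), the second equation of~\eqref{eq:principal-disc} gives, past the transient,
\[
I_{n+1}=\frac{(1+\beta_nS_{n+1})\,I_n}{1+c_n+\eta_n\, g(S_n,I_n,P_n)} \ge \frac{(1+\beta_n(x^*_{1,\eps,n+1}-\delta))\,I_n}{1+c_n+\eta_n\, g(x^*_{1,\eps,n}-\delta,0,z^*_{2,\eps,n}+\delta)},
\]
and multiplying over a window of length $\lambda+1$ yields $I_{n+\lambda+1}>(1+\delta_1)I_n$. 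Since $I_n>0$ by H\ref{Cond-aditional2}), iterating forces $I_n\to+\infty$, contradicting the boundedness in H\ref{Cond-7a}). Hence no solution keeps $I_k\le\eps$ for all large $k$, producing a uniform $\eps_0>0$ with $\limsup_n I_n\ge\eps_0$.

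To upgrade to strong persistence I would exploit the uniform one-step lower bound $I_{n+1}\ge\alpha I_n$, where $\alpha=1/(1+c^u+\eta^u g^{\max})\in(0,1)$ and $g^{\max}$ bounds $g$ on the attracting region of H\ref{Cond-7a}); thus $I$ can lose at most the factor $\alpha$ per step. Combined with the window-growth inequality above, every maximal interval on which $I_k\le\eps$ has uniformly bounded length, because after the fixed transient $T^*$ the growth inequality drives $I$ up by $1+\delta_1$ each $\lambda+1$ steps and so forces a return above $\eps$ within a bounded number of windows. Consequently $I_n$ never drops below $m:=\eps\,\alpha^{T^*+\lambda+1}$ during such an excursion (it loses at most $\alpha^{T^*}$ across the transient and is controlled on the window scale thereafter), while outside the excursions $I_n>\eps>m$. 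Therefore $\liminf_n I_n\ge m>0$ uniformly, which is the asserted strong persistence.

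The delicate point, and the main obstacle, is the comparison that sandwiches $(S_n,P_n)$ between the attractors of~\eqref{eq:aux2a-disc} and~\eqref{eq:aux2b-disc}. Since the $S$- and $P$-equations are coupled through $f$, a naive componentwise comparison is not immediate: one must use the monotonicity of $f$ in H\ref{cond-f}) to show that the presence of $I_k\le\eps$ acts as a uniformly small, sign-definite perturbation of the disease-free subsystem~\eqref{eq:aux2-disc}, and then invoke the global uniform attractivity and continuity in $\eps$ of H\ref{Cond-4}) and H\ref{Cond-5}) to transfer closeness of the perturbed auxiliary attractors into closeness of $(S_n,P_n)$ itself, uniformly over initial data in the region $\mathcal R$ of H\ref{Cond-7a}). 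Making the transient $T^*$ and the constant $\delta_1$ genuinely uniform—independent of the solution and of the position inside a long low excursion—is precisely what renders the persistence quantitative and is where most of the work lies.
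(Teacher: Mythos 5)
Your weak-persistence step is essentially the paper's own: assuming $I_n<\eps_1$ eventually, sandwich $(S_n,P_n)$ between solutions of~\eqref{eq:aux2a-disc} and~\eqref{eq:aux2b-disc}, use the attractivity and continuity in $\eps$ from H\ref{Cond-4})--H\ref{Cond-5}) to replace $(S_n,P_n)$ by the perturbed attractors, and run the window product coming from $\cR^\ell(\lambda)>1$ to force $I_n\to+\infty$, contradicting boundedness (the paper contradicts Lemma~\ref{lemma:region}). Where you genuinely diverge is the upgrade to strong persistence. The paper argues by contradiction with a sequence of solutions: if uniform strong persistence fails, it picks initial data $z_{0,k}$ with $\liminf_n I_{n,z_{0,k}}<\eps_0/k^2$, extracts excursion intervals $[s_{m,k},t_{m,k}]$ on which $I$ runs between the levels $\eps_0/k$ and $\eps_0/k^2$, uses the one-step bound $I_{n+1}\ge I_n/(1+a)$ to show $t_{m,k}-s_{m,k}\ge \ln k/\ln(1+a)\to+\infty$, and then, on these automatically long excursions, combines the comparison with~\eqref{eq:aux2-disc} and the product over the final $\lambda+1$ steps to get $I_{t_{m,k}}>\eps_0/k^2$, a contradiction. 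You instead argue directly: the same one-step decay bound plus the post-transient window growth shows that every maximal excursion below $\eps$ has uniformly bounded length and bounded depth, yielding the explicit floor $m=\eps\,\alpha^{T^*+\lambda+1}$. Your route is more quantitative (it exhibits a persistence threshold), whereas the paper's contradiction scheme buys exactly the thing you identify as the main obstacle in a weaker form: because the excursion lengths blow up along the contradicting sequence, the paper never needs a uniform bound on excursion length, only that excursions eventually outlast a fixed transient, with uniformity required only over the excursions of one solution at a time (its starting points lie in a bounded set since each solution is bounded by H\ref{Cond-7a})). Both proofs ultimately rest on the same unproved uniformity of the attraction in H\ref{Cond-4})/H\ref{Cond-5}) over initial data and starting times, which you flag honestly and the paper absorbs silently into its reading of those hypotheses (consistent with the ``globally uniformly attractive'' phrasing in Lemmas~\ref{lemma:aux-S} and~\ref{lemma:aux-P}); so this is a shared caveat, not a gap peculiar to your argument.
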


\begin{proof}
Assume that there is a constant $\lambda>0$ such that $\cR^\ell(\lambda)>1$. Then, there is a function $\psi$ such that, for all $\delta>0$ sufficiently small we have
\begin{equation}\label{eq:condi-per}
\prod_{k=n}^{n+\lambda} \dfrac{1+\beta_k(x^*_{k+1}-\delta_0)}{1+c_k+\eta_kg(s^*_k-\delta_0,0,z^*_k+\delta_0)}>1+\psi(\delta),
\end{equation}
with $\psi(\delta)>0$ for all $\delta>0$ and $\psi(\delta)\to 0$ as $\delta \to 0$.
Let $N_1 \in \N$ and $(S_n,I_n,P_n)$ be a solution of~\eqref{eq:principal-disc} with $I_n>0$ for all $n \ge N_1$. We will use a contradiction argument to prove that there is $\eps_1>0$ such that
    \begin{equation}\label{eq:weak-persist}
    \limsup_{n \to +\infty} I_n >\eps_1.
    \end{equation}
We may assume that $\eps_1>0$ is sufficiently small so that {H8)} and {H9)} hold for $\eps_1$.
Assuming that~\eqref{eq:weak-persist} does not hold, there is $N_2 \ge N_1$ such that $I_n<\eps_1$ for all $n \ge N_2$.
By the first and third equation in~\eqref{eq:principal-disc}, we conclude that
\[
\begin{cases}
S_{n+1}-S_n\le\Lambda_n-\mu_nS_{n+1}-a_n{f(S_{n+1},\eps_1,P_n)P_n}\\
P_{n+1}-P_n\le(r_n-b_nP_{n+1})P_n+\gamma_na_n{f(S_{n+1},0,P_n)P_n}+\theta_n\eta_ng(S_n,0,P_n)P_n\eps_1
\end{cases},
\]
for all $n \ge N_2$. Considering system~\eqref{eq:aux2b-disc} with $\eps=\eps_1$, we have $S_n \le x_{2,\eps_1,n}$ and $P_n \le z_{2,\eps_1,n}$ for sufficiently large $n$. By {H9)}  we also have, for sufficiently large $n$,
\[
x_{2,\eps_1,n} \le x_{2,\eps_1,n}^*+\eps_1 \quad \text{ and } \quad z_{2,\eps_1,n} \le z_{2,\eps_1,n}^*+\eps_1
\]
and by the continuity properties in H\ref{Cond-4}) and H\ref{Cond-5}), we have
\[
x_{2,\eps_1,n} \le x_{2,\eps_1,n}^*+\eps_1 \le x_{2,n}^*+\chi_1(\eps_1) \quad \text{ and } \quad z_{2,\eps_1,n} \le z_{2,\eps_1,n}^*+\eps_1 \le
z_{2,\eps_1,n}^*+ \chi_2(\eps_1),
\]
with $\chi_1(\eps_1),\chi_2(\eps_1) \to 0$ as $\eps_1\to 0$. Thus, in particular, for sufficiently large $n$,
\begin{equation}\label{eq:Pn-persist}
    S_n \le x^*_{2,\eps,n} \le x_{2,n}^*+\chi_1(\eps_1) \quad \text{ and } \quad P_n \le z^*_{2,\eps,n} \le z_{2,\eps_1,n}^*+ \chi_2(\eps_1),
\end{equation}

Again by the first and third equation in~\eqref{eq:principal-disc}, we conclude that
\[
\begin{cases}
S_{n+1}-S_n\ge\Lambda_n-\mu_nS_{n+1}-a_n{f(S_{n+1},0,P_n)P_n}-\beta_n S_{n+1}\eps_1\\
P_{n+1}-P_n\ge(r_n-b_nP_{n+1})P_n+\gamma_na_n{f(S_{n+1},\eps_1,P_n)P_n}
\end{cases},
\]
for all $n \ge N_2$.

Consider system~\eqref{eq:aux2a-disc} with $\eps=\eps_1$. We have $S_n \ge x_{1,\eps_1,n}$ and $P_n \ge z_{1,\eps_1,n}$ for sufficiently large $n$. By {H8)}  we also have, for sufficiently large $n$,
\[
x_{1,\eps_1,n} \ge x_{1,\eps_1,n}^*-\eps_1 \quad \text{ and } \quad z_{1,\eps_1,n} \ge z_{1,\eps_1,n}^*-\eps_1.
\]
and by the continuity properties in H\ref{Cond-4}) and H\ref{Cond-5}), we have
\[
x_{1,\eps_1,n} \ge x_{1,\eps_1,n}^*-\eps_1 \ge x_{1,n}^*-\phi_1(\eps_1) \quad \text{ and } \quad z_{1,\eps_1,n} \ge z_{1,\eps_1,n}^*-\eps_1 \ge
z_{1,\eps_1,n}^*- \phi_2(\eps_1),
\]
with $\phi_1(\eps_1),\phi_2(\eps_1) \to 0$ as $\eps_1\to 0$.
Thus, in particular, for sufficiently large $n$,
\begin{equation}\label{eq:Sn-persist}
    S_n \ge x_{1,\eps,n} \ge x_{1,n}^*-\phi_1(\eps_1) \quad \text{ and } \quad P_n \ge z_{1,\eps,n} \ge z_{1,n}^*-\phi_2(\eps_1).
\end{equation}

From the second equation in~\eqref{eq:principal-disc},~\eqref{eq:Sn-persist},~\eqref{eq:Pn-persist} and~\eqref{eq:condi-per}, we conclude that
\begin{equation}\label{eq-gerar-contrad}
\begin{split}
I_{n+1}
& =\dfrac{\beta_nS_{n+1}I_n+I_n}{1+\eta_ng(S_n,I_n,P_n)+c_n}\\
&\ge \dfrac{\beta_n( x_{1,\eps_1,n}^*-\eps_1)+1}{1+\eta_ng( x_{1,\eps_1,n}^*-\eps_1,0,z_{2,\eps_1,n}^*+\eps_1)+c_n}\, I_n\\
& >(1+\psi(\eps_1))\,I_{n-\lambda-1}\\
&>\cdots>(1+\psi(\eps_1))^{\lfloor n/(\lambda+1)\rfloor} \, I_{n-\lfloor n/(\lambda+1)\rfloor(\lambda+1)},
\end{split}
\end{equation}
for all $n\ge N_3$ with $N_3 \ge N_2$. Therefore, by~\eqref{eq:condi-per} and~\eqref{eq-gerar-contrad}, we conclude that $I_n \to +\infty$. A contradiction to Lemma~\ref{lemma:region}. We have~\eqref{eq:weak-persist} and the infectives in system~\eqref{eq:principal-disc} are weak persistent.

Using again a contradiction argument, we will prove that we have strong persistence of the infectives. We may assume, with no loss of generality, that
there are $\delta,\delta_0>0$ such that
\begin{equation}\label{eq:condi-per2}
\prod_{k=n}^{n+\lambda} \dfrac{1+\beta_k(x^*_{k+1}-\delta_0)}{1+c_k+\eta_kg(x^*_k-\delta_0,0,z^*_k+\delta_0)}>1+\delta,
\end{equation}
for all sufficiently large $n \in \N$. For each $z_0=(S_0,I_0,P_0)$, denote by $((S_{n,z_0},I_{n,z_0},P_{n,z_0}))$ the solution of~\eqref{eq:principal-disc}  with {$(S_{0,z_0},I_{0,z_0},P_{0,z_0})=(S_0,I_0,P_0)$}.

Proceeding by contradiction, if the system is not strong persistent, then there is a sequence of initial values $z_{0,k}=(S_{0,k},I_{0,k},P_{0,k})$, $k \in\N$, such that
    \begin{equation}\label{eq:contrad-2}
    \liminf_{n \to +\infty} I_{n,z_{0,k}} < \frac{\eps_0}{k^2}.
    \end{equation}
From~\eqref{eq:weak-persist} and~\eqref{eq:condi-per2}, for each $k \in\N$ there are sequences $(s_{m,k})$ and $(t_{m,k})$ such that
\begin{equation}\label{eq:prop-seq-1}
0<s_{1,k}<t_{1,k}<s_{2,k}<t_{2,k}<\cdots<s_{m,k}<t_{m,k}<\cdots,
\end{equation}
\begin{equation}\label{eq:prop-seq-2}
s_{m,k} \to +\infty \ \ \text{as} \ \ m \to +\infty,
\end{equation}
\begin{equation}\label{eq:prop-seq-3}
I_{s_{m,k},z_{0,k}}>\dfrac{\eps_0}{k}, \ \  I_{t_{m,k},z_{0,k}}<\dfrac{\eps_0}{k^2},
\end{equation}
and
\begin{equation}\label{eq:prop-seq-4}
\frac{\eps_0}{k^2}\le I_{n,z_{0,k}}\le \frac{\eps_0}{k}, \ \  \text{for all} \ \ n \in [s_{m,k},t_{m,k}-1]\cap\N.
\end{equation}

For any $n \in [s_{m,k},t_{m,k}-1]\cap\N$ sufficiently large, we have, using~\eqref{eq:bound},
\[
\begin{split}
I_{n+1,z_{0,k}}
&=\dfrac{1+\beta_nS_{n+1,z_{0,k}}}{1+c_n+\eta_n g(S_{n,z_{0,k}},I_{n,z_{0,k}},P_{n,z_{0,k}})}I_{n,z_{0,k}}\\
& > \dfrac{1}{1+c_n+\eta_n g(S_{n,z_{0,k}},0,P_{n,z_{0,k}})}I_{n,z_{0,k}}\\
& \ge\dfrac{1}{1+a}I_{n,z_{0,k}},
\end{split}
\]
where $a=c^u+\eta^u g(0,0,L+\delta)>0$. Therefore, by~\eqref{eq:prop-seq-3}, we obtain
$$
\frac{\eps_0}{k^2} > I_{t_{m,k},z_{0,k}} \ge \left(\dfrac{1}{1+a}\right)^{t_{m,k}-s_{m,k}} I_{s_{m,k},z_{0,k}}
> \left(\dfrac{1}{1+a}\right)^{t_{m,k}-s_{m,k}} \frac{\eps_0}{k},
$$
and therefore we get
$$t_{m,k}-s_{m,k} > \dfrac{\ln k}{\ln(1+a)}\to +\infty \ \ \text{as} \ \ k \to +\infty. $$
We conclude that we can choose $k_1 \in \N$ such that
$$t_{m,k}-s_{m,k} > n_1+\lambda+1,$$
for all $k \ge k_1$.

Now, for all $k \ge k_1$ and $n \in [s_{m,k}+1,t_{m,k}]\cap\N$, we have
\[
\begin{cases}
S_{n+1,z_{0,k}}-S_{n,z_{0,k}}& \le\Lambda_n-\mu_nS_{n+1,z_{0,k}}-a_n{f(S_{n+1,z_{0,k}},P_{n,z_{0,k}})P_{n,z_{0,k}}}\\
P_{n+1,z_{0,k}}-P_{n,z_{0,k}}& \le(r_n-b_nP_{n+1,z_{0,k}})P_{n,z_{0,k}}\\
& \phantom{\le} +\gamma_na_n{f(S_{n+1,z_{0,k}},P_{n,z_{0,k}})P_{n,z_{0,k}}}+\theta_n\eta_ng(S_{n,z_{0,k}},I_{n,z_{0,k}},P_{n,z_{0,k}})\eps_1
\end{cases},
\]
Let $(\bar x_n, \bar z_n)$ be a solution of~\eqref{eq:aux2-disc} with initial condition $\bar x_{s_{m,k}+1}=S_{s_{m,k}+1}$ and $\bar z_{s_{m,k}+1}=P_{s_{m,k}+1}$. By {H8)}, for sufficiently large $k \in \N$ we have
\[
|S_{n,z_0,k}-x^*_n|\le |S_{n,z_0,k}-\bar x_n|+|\bar x_n-x^*_n| < \eps_0/2+\eps_0/2=\eps_0
\]
for all $n \in [s_{m,k}+1,t_{m,k}]\cap\N$. In particular
\begin{equation}\label{eq:final-eq-a}
S_{n,z_0,k} \ge x^*_n - \eps_0,
\end{equation}
for all $n \in [s_{m,k}+1,t_{m,k}]\cap\N$.
In a similar way, using {H9)}, we conclude that, for sufficiently large $k \in \N$ we have
\begin{equation}\label{eq:final-eq-a}
P_{n,z_0,k} \le y^*_n + \eps_0,
\end{equation}
for all $n \in [s_{m,k}+1,t_{m,k}]\cap\N$.

Finally, we have
\begin{equation}\label{eq:ultima--1}
\begin{split}
I_{n+1,z_{0,k}}
& =\dfrac{1+\beta_nS_{n+1,z_{0,k}}}{1+c_n+\eta_n g(S_{n,z_{0,k}},I_{n,z_{0,k}},P_{n,z_{0,k}})}I_{n,z_{0,k}}\\
& \ge \dfrac{1+\beta_n(x^*_n-\eps_0)}{1+c_n+\eta_n g(x^*_n - \eps_0,0,y^*_n+\eps_0)}I_{n,z_{0,k}}
\end{split}
\end{equation}
for all $n \in [s_{m,k}+n_1+1,t_{m,k}]\cap\N$ and $k \ge n_4$. By~\eqref{eq:contrad-2} and~\eqref{eq:ultima--1} we get
\[
\frac{\eps_0}{k^2}
> I_{t_{m,k},z_{0,k}}
\ge I_{t_{m,k}-\lambda,z_{0,k}} \prod_{n=t_{m,k}-\lambda}^{t_{m,k}}
\dfrac{1+\beta_n(x^*_n-\eps_0)}{1+c_n+\eta_n g(x^*_n - \eps_0,0,y^*_n+\eps_0)}I_{n,z_{0,k}} > \frac{\eps_0}{k^2},
\]
a contradiction. Thus we conclude that the infectives are strong persistent and the result follow.
\end{proof}

\section{Examples}\label{sec3}

\subsection{A model with no predation of uninfected preys}

Letting $a \equiv 0$ and $g(x,y,z)=z$ in~\eqref{eq:principal-disc}, we obtain the model below that corresponds to the discrete counterpart of the model in~\cite{Niu-Zhang-Teng-AMM-2011}.
\begin{equation}\label{eq:principal-disc-Zhang-Niu-Teng}
\begin{cases}
S_{n+1}-S_n=\Lambda_n-\mu_nS_{n+1}-\beta_nS_{n+1}I_n\\
I_{n+1}-I_n=\beta_nS_{n+1}I_n-\eta_nP_nI_{n+1}-c_nI_{n+1}\\
P_{n+1}-P_n=(r_n-b_nP_{n+1})P_n+\theta_n\eta_nP_nI_{n+1}
\end{cases}.
\end{equation}

For model~\eqref{eq:principal-disc-Zhang-Niu-Teng} we assume conditions H\ref{Cond-1}), H\ref{Cond-1a}) and H\ref{Cond-2}). Notice that H\ref{cond-f}) is trivial,  H\ref{Cond-aditional}) and H\ref{Cond-aditional2}) follow from the discussion on~\eqref{eq:principal-disc-explicit} with $a_n=0$, H\ref{Cond-7a}) follows from Lemma~\ref{lemma:region} and H\ref{Cond-4}) and  H\ref{Cond-5}) follow from Lemma \ref{lemma:aux-S} and Lemma~\ref{lemma:aux-P}, respectively.

For each solution $(s_n^*)$ of~\eqref{eq:aux-syst-S} with $s_0>0$, each solution $(y_n^*)$ of~\eqref{eq:aux-syst-P} with $y_0>0$ and each $\lambda \in \N$, in this context of no predation (of uninfected preys) we set
\[
\cR_{NP}^\ell(\lambda) = \liminf_{n \to \, +\infty} \prod_{k=n}^{n+\lambda} \dfrac{1+\beta_ks^*_{k+1}}{1+c_k+\eta_ky^*_k}
\]
and
\[
\cR_{NP}^u(\lambda) = \limsup_{n \to \, +\infty} \prod_{k=n}^{n+\lambda} \dfrac{1+\beta_ks^*_{k+1}}{1+c_k+\eta_ky^*_k}.
\]

The next theorems correspond to discrete counterparts of the results in~\cite{Niu-Zhang-Teng-AMM-2011}.

\begin{theorem}\label{teo:Main-ext-Zhang-Niu-Teng}
If there is $\lambda \in \N$ such that $\cR_{NP}^u(\lambda)<1$ then the infectives $(I_n)$ go to extinction in system~\eqref{eq:principal-disc-Zhang-Niu-Teng} and any disease-free solution $((s^*_n,0,y^*_n))$ of~\eqref{eq:principal-disc-Zhang-Niu-Teng}, where $(s^*_n)$ is a solution of~\eqref{eq:aux-syst-S} and $(y^*_n)$ is a solution of~\eqref{eq:aux-syst-P}, is globally asymptotically attractive.
\end{theorem}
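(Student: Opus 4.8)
The plan is to recognise Theorem~\ref{teo:Main-ext-Zhang-Niu-Teng} as a direct specialisation of the first extinction theorem (the one stated just before the persistence theorem) to the model~\eqref{eq:principal-disc-Zhang-Niu-Teng}, which is obtained by setting $a \equiv 0$ and $g(S,I,P)=P$. First I would verify that the structural hypothesis of the general extinction theorem, namely $g(S+I,0,P) \le g(S,I,P)$, is satisfied here: since $g(x,y,z)=z$ is independent of the first two coordinates, one has $g(S+I,0,P)=P=g(S,I,P)$, so the inequality holds trivially (with equality). Likewise, $g$ has the product form $g(S,I,P)=g_0(S,I)P$ with $g_0 \equiv 1$, so the additional hypothesis needed for the global attractivity conclusion is met, and $a \equiv 0$ is assumed by construction.

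Next I would check that the threshold quantities match. In the no-predation setting, substituting $g(s^*_k,0,y^*_k)=y^*_k$ into the definition~\eqref{eq:limsup-threshold} of $\cR^u(\lambda)$ yields exactly
\[
\cR^u(\lambda) = \limsup_{n \to +\infty} \prod_{k=n}^{n+\lambda} \dfrac{1+\beta_k s^*_{k+1}}{1+c_k+\eta_k y^*_k} = \cR_{NP}^u(\lambda),
\]
so the hypothesis $\cR_{NP}^u(\lambda)<1$ is literally the hypothesis $\cR^u(\lambda)<1$ of the general theorem. At this point I would also record that all the standing assumptions H1)--H9) needed to invoke the general theorem have already been checked for this model in the paragraph following~\eqref{eq:principal-disc-Zhang-Niu-Teng}: H\ref{cond-f}) is trivial, H\ref{Cond-aditional}) and H\ref{Cond-aditional2}) follow from the explicit form~\eqref{eq:principal-disc-explicit}, H\ref{Cond-7a}) follows from Lemma~\ref{lemma:region}, and H\ref{Cond-4}), H\ref{Cond-5}) follow from Lemmas~\ref{lemma:aux-S} and~\ref{lemma:aux-P}.

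With these verifications in hand, the proof is essentially a citation: the general extinction theorem gives both conclusions at once, namely that $I_n \to 0$ and that any disease-free solution $(s^*_n,0,y^*_n)$ is globally asymptotically attractive, precisely because we are in the case $a \equiv 0$ and $g(S,I,P)=g_0(S,I)P$ for which the general theorem asserts the attractivity statement. I do not expect any genuine obstacle here; the only point requiring a moment's care is confirming that the invariant-region hypothesis H\ref{Cond-7a}) and the constant $L$ from Lemma~\ref{lemma:region} apply when $g$ has the product form $g_0(S,I)P$ with $g_0 \equiv 1$, which is exactly the hypothesis of that lemma. Consequently the proof reduces to observing that every hypothesis of the general extinction theorem is met and that $\cR^u(\lambda)=\cR_{NP}^u(\lambda)$, after which the result follows immediately.
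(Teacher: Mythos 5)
Your proposal is correct and takes essentially the same approach as the paper: the paper gives no separate proof of this theorem, treating it exactly as you do — a direct specialisation of the general extinction theorem to $a \equiv 0$ and $g(x,y,z)=z$, with the standing hypotheses H1)--H9) verified in the paragraph preceding the statement. Your explicit checks that $g(S+I,0,P)=g(S,I,P)$ holds with equality, that $g$ has the product form $g_0(S,I)P$ with $g_0\equiv 1$ (so the attractivity conclusion applies), and that $\cR^u(\lambda)=\cR_{NP}^u(\lambda)$ are precisely the observations the paper leaves implicit.
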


\begin{theorem}\label{teo:Main-ext-Zhang-Niu-Teng}
If there is $\lambda \in \N$ such that $\cR_{NP}^\ell(\lambda)>1$ then the infectives $(I_n)$ are strongly persistent in system~\eqref{eq:principal-disc-Zhang-Niu-Teng}.
\end{theorem}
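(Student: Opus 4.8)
The plan is to recognize that system~\eqref{eq:principal-disc-Zhang-Niu-Teng} is exactly the specialization of the general model~\eqref{eq:principal-disc} obtained by setting $a \equiv 0$ and $g(S,I,P)=P$, and then to invoke the general strong persistence result (Theorem~\ref{teo:Main-ext}) once I have checked that all its structural hypotheses hold and that its threshold $\cR^\ell(\lambda)$ collapses onto $\cR_{NP}^\ell(\lambda)$ in this setting. Thus the argument is a bookkeeping reduction rather than a fresh dynamical estimate.

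First I would verify the hypotheses H\ref{Cond-1})--H9) for~\eqref{eq:principal-disc-Zhang-Niu-Teng}, as already outlined in the paragraph following its statement: H\ref{Cond-1}), H\ref{Cond-1a}) and H\ref{Cond-2}) are assumed; H\ref{cond-f}) is immediate since $g(x,y,z)=z$ (and $f$ is irrelevant when $a\equiv0$); existence/uniqueness H\ref{Cond-aditional}) and positivity/invariance H\ref{Cond-aditional2}) come from the explicit form~\eqref{eq:principal-disc-explicit} with $a_n=0$; boundedness H\ref{Cond-7a}) follows from Lemma~\ref{lemma:region}; and the global-stability hypotheses H8), H9) on the auxiliary families~\eqref{eq:aux2a-disc}--\eqref{eq:aux2b-disc} follow from Lemma~\ref{lemma:aux-S} and Lemma~\ref{lemma:aux-P}. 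The point here is that with $a\equiv0$ and $g(x,y,z)=z$ both auxiliary systems decouple into scalar perturbations of~\eqref{eq:aux-syst-S} and~\eqref{eq:aux-syst-P}, so the required globally asymptotically stable families and their continuity in $\eps$ are precisely what those two lemmas supply.

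Next I would pin down the threshold. Since $a\equiv0$, the coupled system~\eqref{eq:aux2-disc} splits: its first equation becomes~\eqref{eq:aux-syst-S} and its second becomes~\eqref{eq:aux-syst-P}. Hence a solution $(x^*_n,z^*_n)$ of~\eqref{eq:aux2-disc} with $\eps=0$ is exactly a pair $(s^*_n,y^*_n)$ with $(s^*_n)$ solving~\eqref{eq:aux-syst-S} and $(y^*_n)$ solving~\eqref{eq:aux-syst-P}. Substituting $g(x^*_k,0,z^*_k)=z^*_k=y^*_k$ into~\eqref{eq:liminf-threshold} gives
$$\cR^\ell(\lambda)=\liminf_{n\to+\infty}\prod_{k=n}^{n+\lambda}\dfrac{1+\beta_k s^*_{k+1}}{1+c_k+\eta_k y^*_k}=\cR_{NP}^\ell(\lambda),$$
and Lemma~\ref{lema:indep} guarantees this value is independent of the chosen positive solutions, so $\cR_{NP}^\ell(\lambda)$ is well defined.

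Consequently the hypothesis $\cR_{NP}^\ell(\lambda)>1$ reads $\cR^\ell(\lambda)>1$, and the general strong persistence theorem yields strong persistence of $(I_n)$ for~\eqref{eq:principal-disc-Zhang-Niu-Teng}. I do not expect a genuine obstacle: the only step requiring real care is the identification of the auxiliary systems and the matching of the two threshold numbers, which I would state explicitly to make the specialization airtight.
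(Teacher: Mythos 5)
Your proposal is correct and matches the paper's approach exactly: the paper also treats Theorem~\ref{teo:Main-ext-Zhang-Niu-Teng} as an immediate specialization of the general strong persistence theorem, verifying H\ref{Cond-1})--H9) for the case $a\equiv 0$, $g(x,y,z)=z$ in the paragraph preceding the statement (with H\ref{Cond-4}), H\ref{Cond-5}) supplied by Lemmas~\ref{lemma:aux-S} and~\ref{lemma:aux-P}) and identifying $\cR^\ell(\lambda)$ with $\cR_{NP}^\ell(\lambda)$ via the decoupling of~\eqref{eq:aux2-disc} into~\eqref{eq:aux-syst-S} and~\eqref{eq:aux-syst-P}. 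Your explicit write-up of the threshold identification and the appeal to Lemma~\ref{lema:indep} is, if anything, slightly more detailed than the paper's own presentation.
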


To do some simulation, we consider the particular solutions $s^*_n=\Lambda/\mu$, $y^*_n=r/b$ and the following particular set of parameters in system~\eqref{eq:principal-disc-Zhang-Niu-Teng}: $\Lambda_n=0.3$, $\mu_n=0.1$, $\beta_n=\beta_0(1+0.7\cos(\pi n/5))$, $\eta_n=0.3(1+0.7\cos(\pi n/5))$, $c_n=0.18$, $r_n=0.3$, $b_n=0.2$, $\theta_n=0.9$. This example is based on a continuous-time example in~\cite{Jesus-Silva-Vilarinho-preprint1-2020}.

When $\beta_0=0.17$ we obtain $\cR_{NP}^u(\lambda)\approx 0.89<1$ and we conclude that we have the extinction (figure~\ref{fig-no-predation-on-uninfected-PER-1}). When $\beta_0=0.29$ we obtain $\cR_{NP}^\ell(\lambda)\approx 1.24>1$ and we conclude that the infectives are strongly persistent (figure~\ref{fig-no-predation-on-uninfected-DF-1}).

In extinction and uniform strong persistence scenario we considered, respectively, the following initial conditions: $(S_0,I_0,P_0)=(0.8,0.6,0.1)$, $(S_0,I_0,P_0)=(1.7,0.2,0.3)$ and $(S_0,I_0,P_0)=(2.3,0.4,0.7)$; $(S_0,I_0,P_0)=(1.5,0.1,0.2)$, $(S_0,I_0,P_0)=(0.7,0.2,0.4)$ and $(S_0,I_0,P_0)=(0.3,0.15,0.9)$.
\begin{figure}
  \begin{minipage}[b]{.32\linewidth}
    \includegraphics[width=\linewidth]{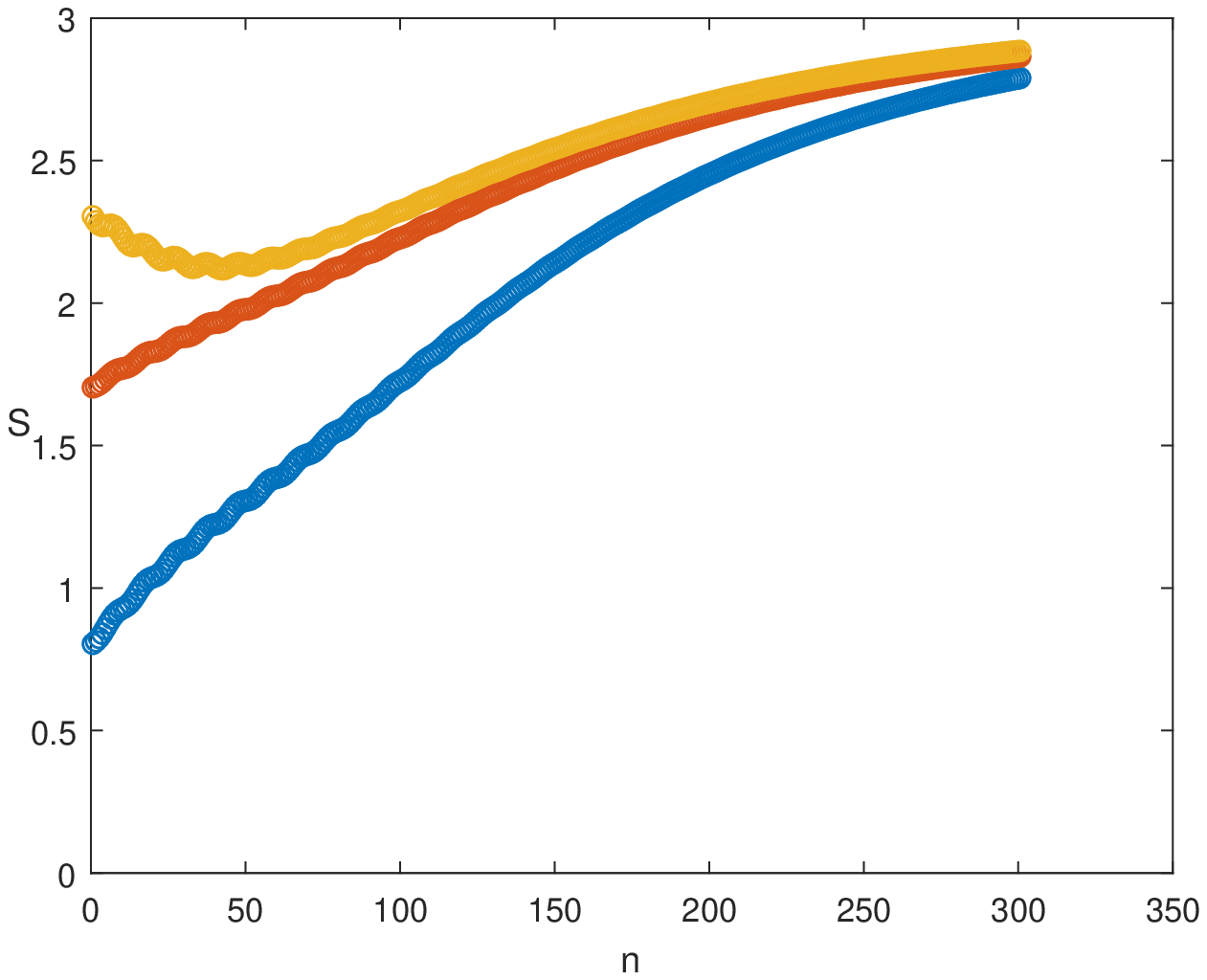}
  \end{minipage}
  \begin{minipage}[b]{.32\linewidth}
        \includegraphics[width=\linewidth]{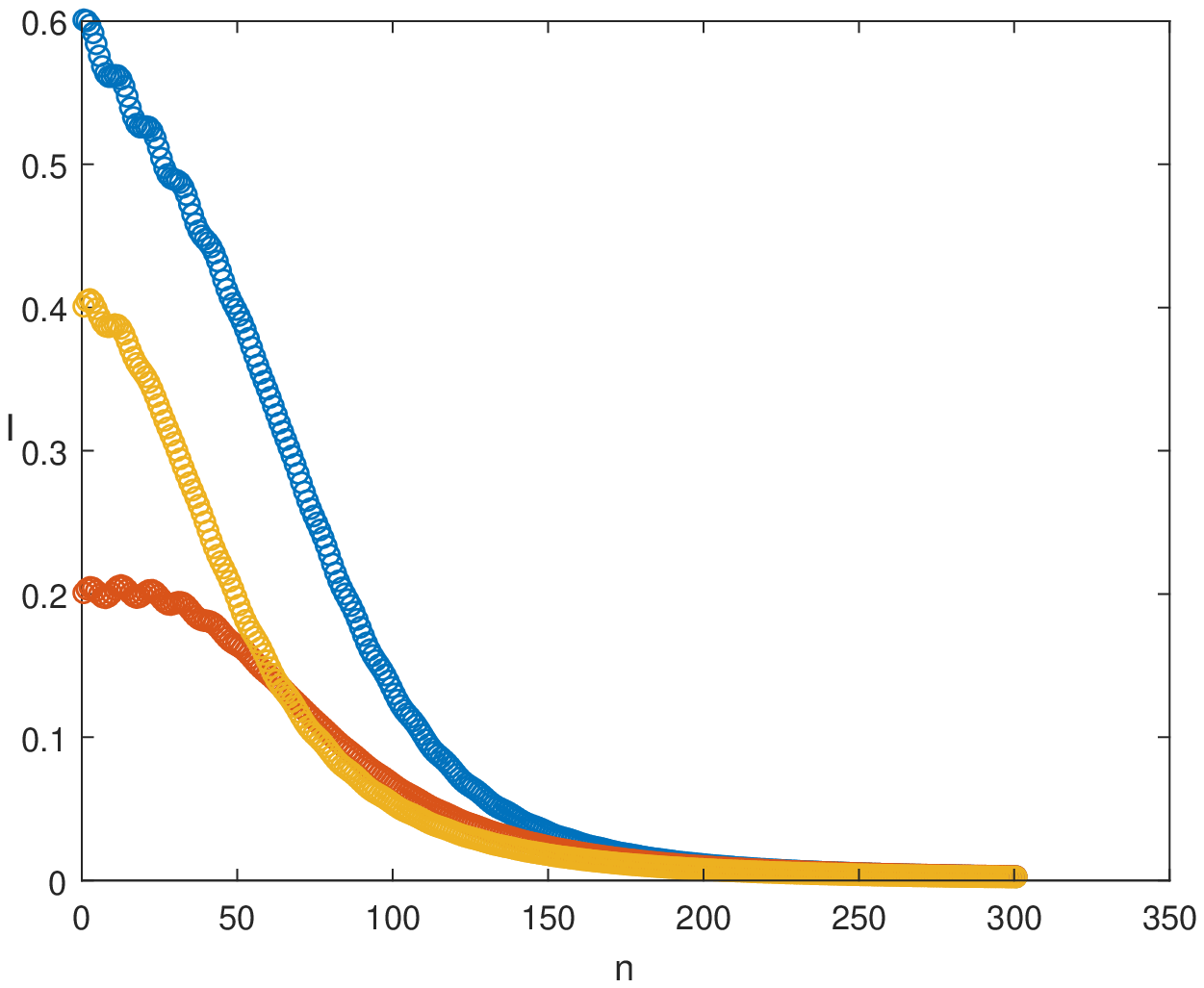}
  \end{minipage}
  \begin{minipage}[b]{.32\linewidth}
        \includegraphics[width=\linewidth]{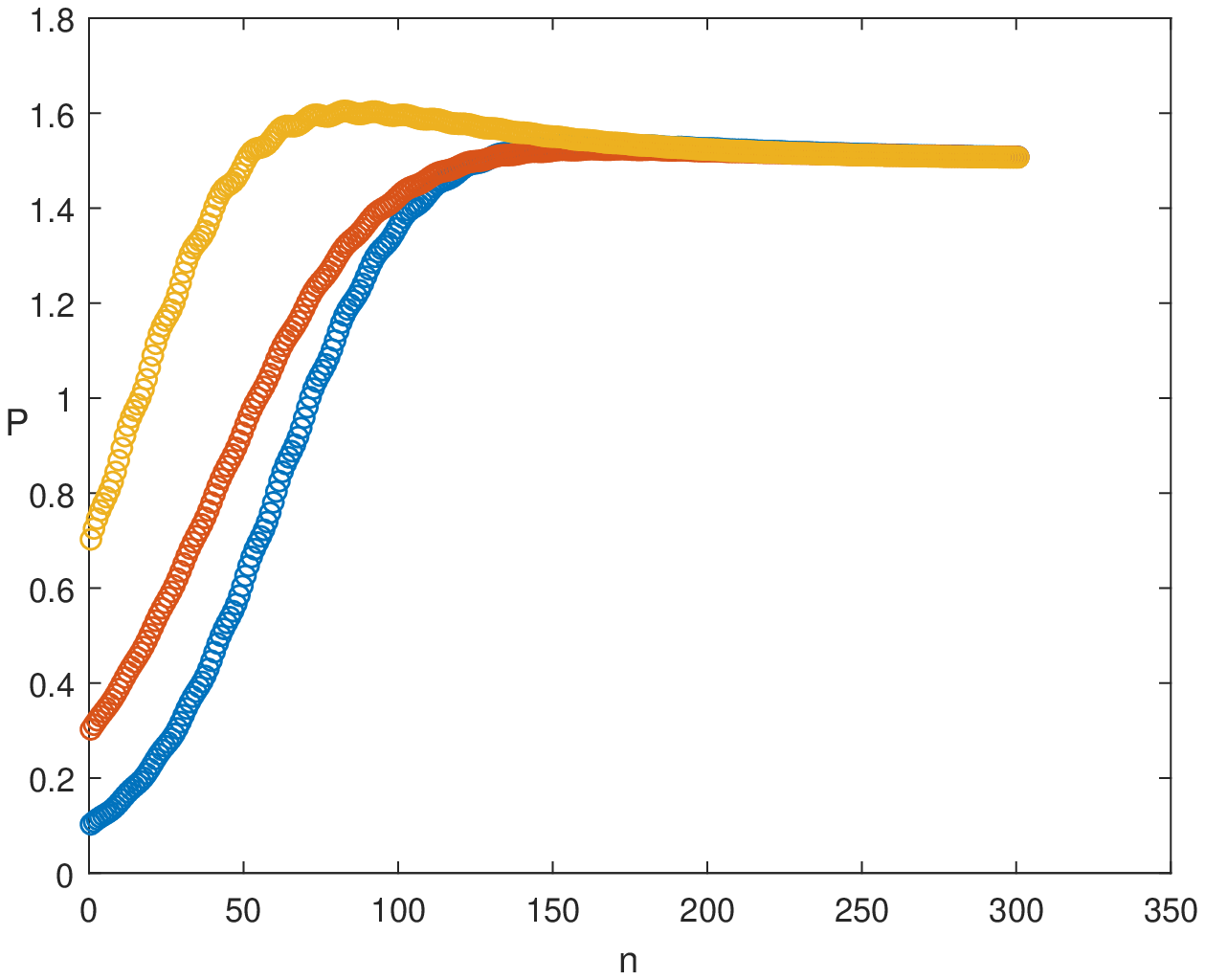}
  \end{minipage}
    \caption{Extinction, when $\beta_0=0.17$.}
      \label{fig-no-predation-on-uninfected-PER-1}
\end{figure}
\begin{figure}
  \begin{minipage}[b]{.32\linewidth}
    \includegraphics[width=\linewidth]{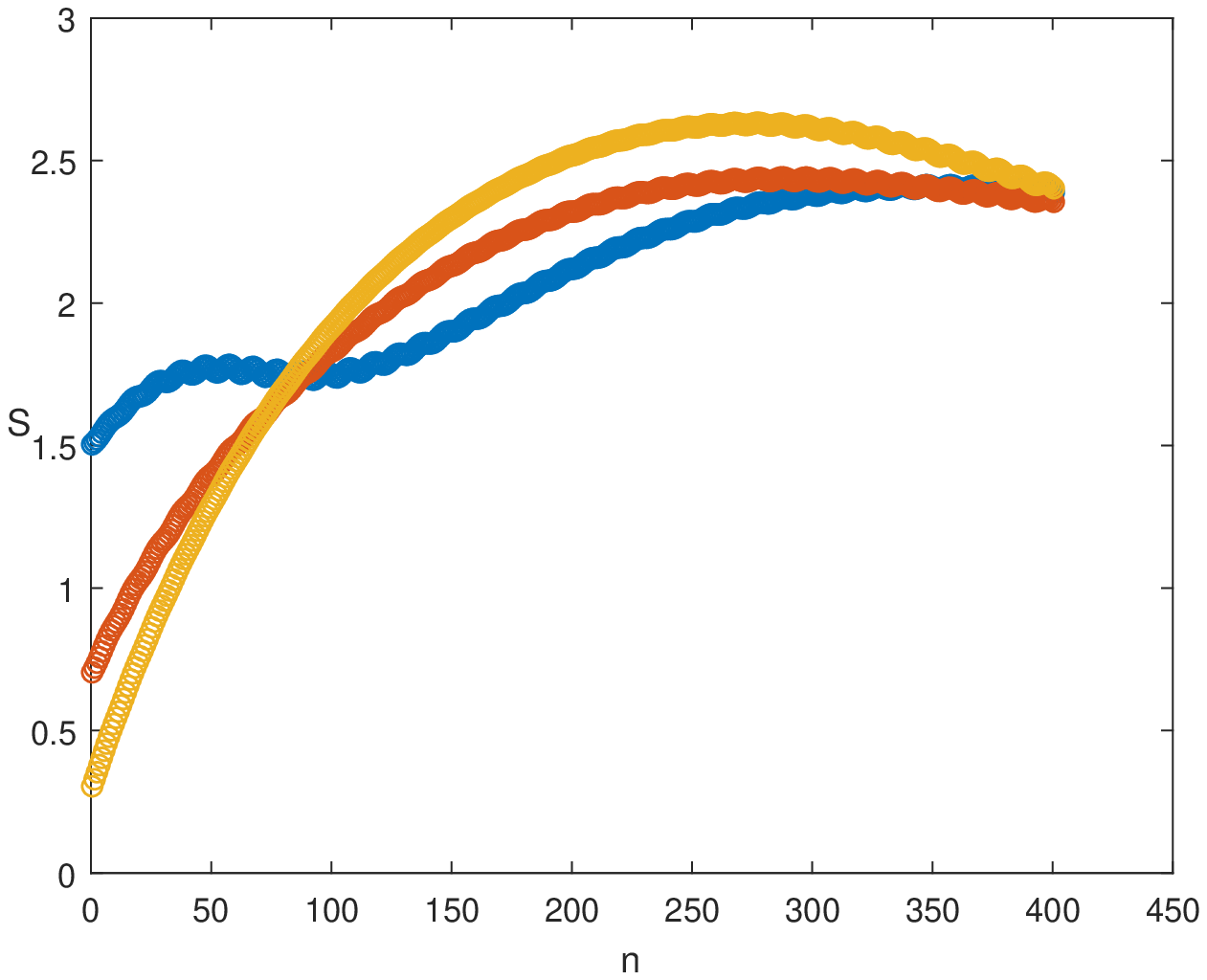}
  \end{minipage}
  \begin{minipage}[b]{.32\linewidth}
        \includegraphics[width=\linewidth]{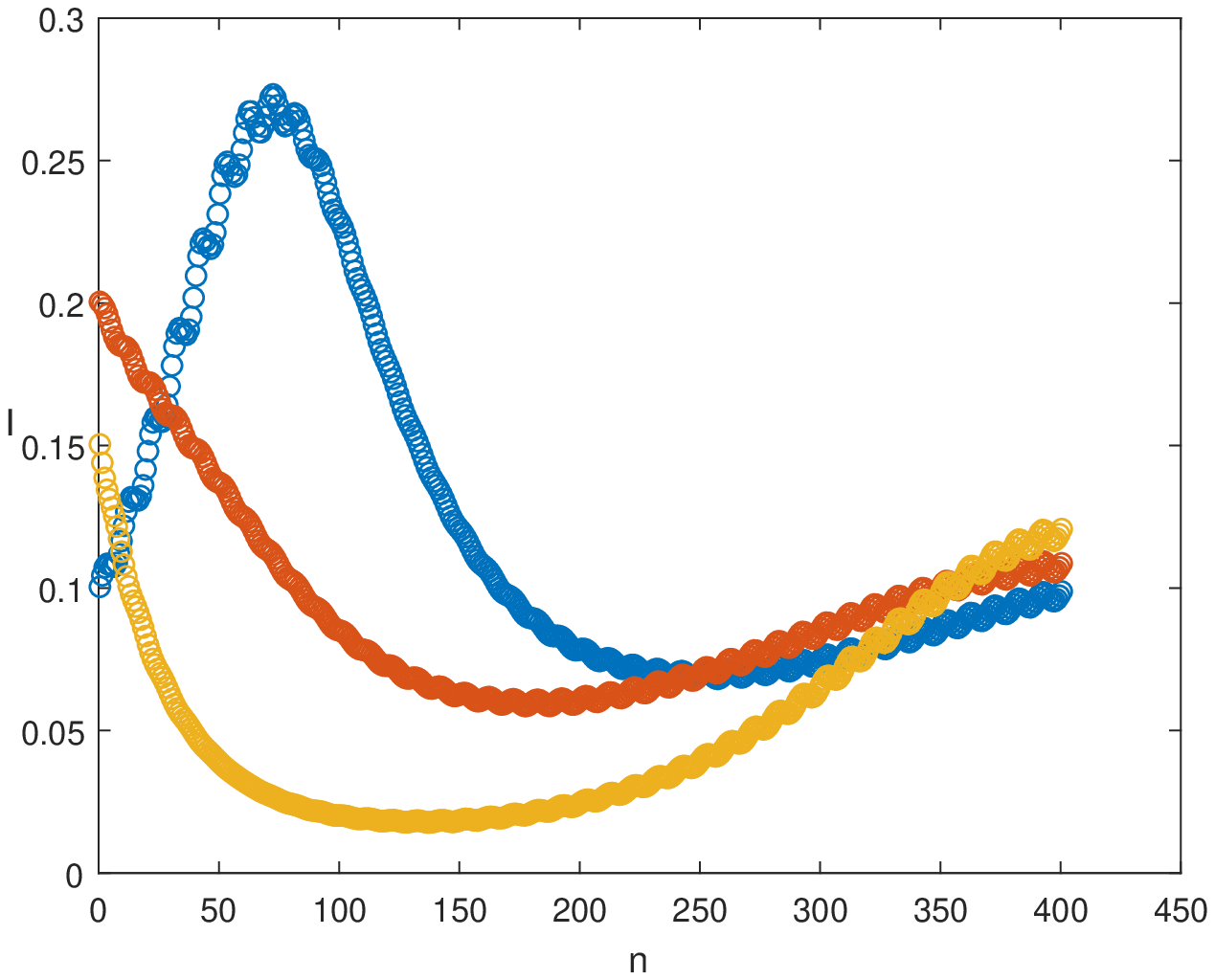}
  \end{minipage}
  \begin{minipage}[b]{.32\linewidth}
        \includegraphics[width=\linewidth]{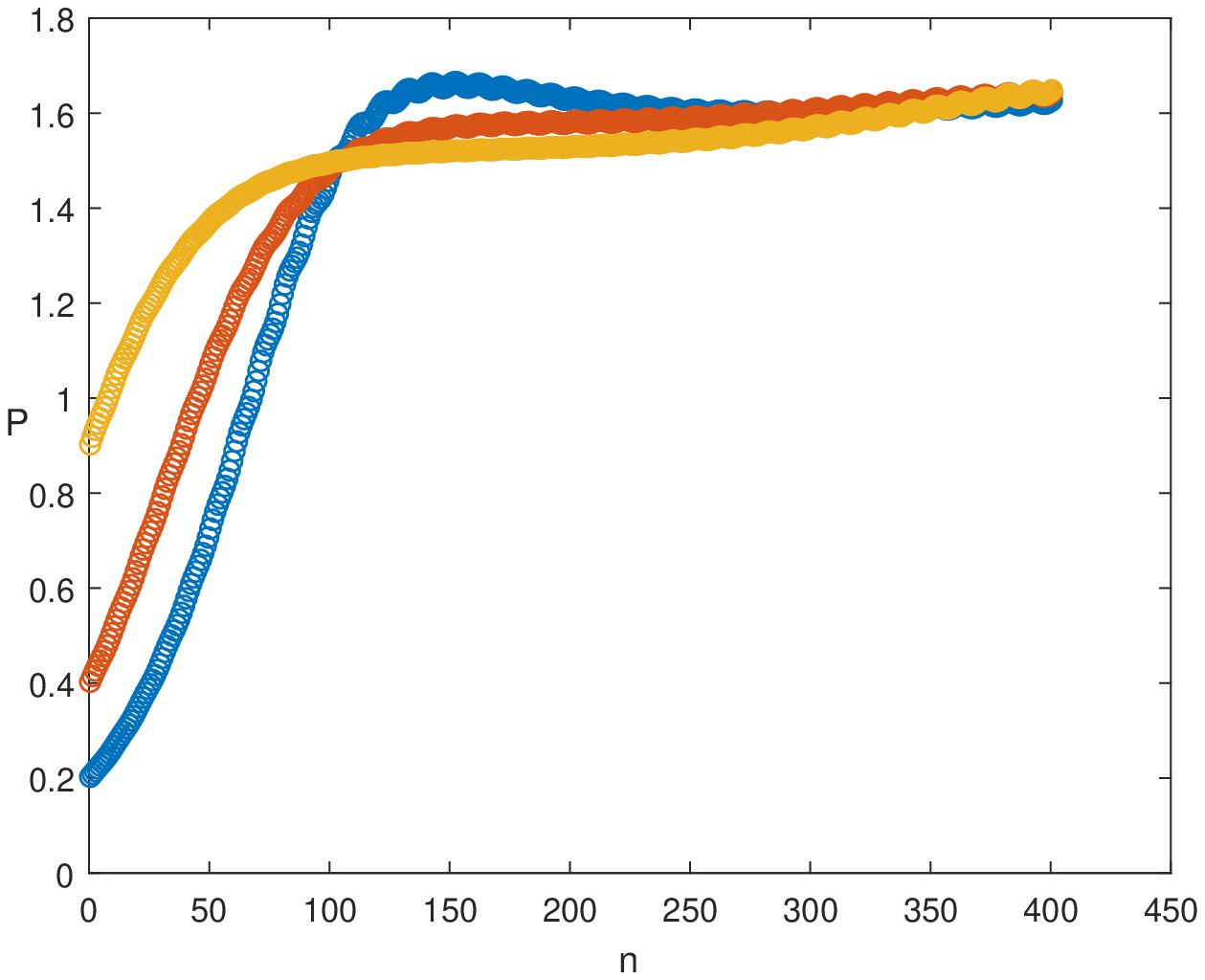}
  \end{minipage}
    \caption{Strong persistence, when $\beta_0=0.29$.}
      \label{fig-no-predation-on-uninfected-DF-1}
\end{figure}

\subsection{Periodic model}
Consider the system~\eqref{eq:principal-disc} and assume that there is $\omega \in \N$ such that $\Lambda_{n+\omega}=\Lambda_n$, $\mu_{n+\omega}=\mu_n$, $a_{n+\omega}=a_n$, $\beta_{n+\omega}=\beta_n$, $\eta_{n+\omega}=\eta_n$, $c_{n+\omega}=c_n$, $r_{n+\omega}=r_n$, $b_{n+\omega}=b_n$, $\gamma_{n+\omega}=\gamma_n$ and $\theta_{n+\omega}=\theta_n$, for all $n \in \N$. Conditions H\ref{Cond-1}) to H\ref{cond-f}) and  H\ref{Cond-aditional}) to  H\ref{Cond-4}) are assumed; condition H\ref{Cond-2}) is trivial.

For each solution $(s_n^*)$ of~\eqref{eq:aux-syst-S} with $s_0>0$, each solution $(y_n^*)$ of~\eqref{eq:aux-syst-P} with $y_0>0$ and for each solution $((x^*_n,z^*_n))$ of~\eqref{eq:aux2a-disc} with $\eps=0$ and initial conditions $x_0>0$ and $z_0>0$, and each $\lambda \in \N$, we set

\[
\mathcal R_{PER}^\ell=\prod_{k=1}^{\omega}\frac{1+\beta_k x^*_{k+1}}{1+c_k+\eta_k g(x^*_k,0,z^*_k)}
\]
and
\[
\mathcal R_{PER}^u=\prod_{k=1}^{\omega}\frac{1+\beta_k s^*_{k+1}}{1+c_k+\eta_kg(s^*_k,0,y^*_k)},
\]

\begin{corollary}
If $\cR_{PER}^u<1$ then the infective $(I_n)$ go to extinction in system~\eqref{eq:principal-disc} and any disease-free solution $((s^*_n,0,y^*_n))$ of~\eqref{eq:principal-disc}, where $(s^*_n)$ is a solution of the periodic version of~\eqref{eq:aux-syst-S} and $(y^*_n)$ is a solution of the periodic version of~\eqref{eq:aux-syst-P}, is globally asymptotically attractive.
\end{corollary}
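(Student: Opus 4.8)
The plan is to deduce this periodic corollary directly from the extinction theorem, Theorem~\ref{teo:Main-ext}, by showing that in the periodic setting the hypothesis $\cR_{PER}^u<1$ forces $\cR^u(\lambda)<1$ for a suitable $\lambda$. The natural auxiliary solutions to work with are the $\omega$-periodic ones, so first I would establish that equations~\eqref{eq:aux-syst-S} and~\eqref{eq:aux-syst-P} admit $\omega$-periodic solutions $(s^*_n)$ and $(y^*_n)$. Since all coefficients are $\omega$-periodic, the time-$\omega$ (Poincar\'e) maps associated to these scalar equations leave invariant the intervals of property~\ref{Cond-C1b-aux}) in Lemma~\ref{lemma:aux-S} and property~\ref{Cond-C1b-aux-y}) in Lemma~\ref{lemma:aux-P}, while the global uniform attractivity in property~\ref{Cond-C3-aux}) in Lemma~\ref{lemma:aux-S} and property~\ref{Cond-C3-aux-y}) in Lemma~\ref{lemma:aux-P} makes each map a continuous self-map of a compact interval with a unique globally attracting fixed point; that fixed point is exactly the initial value of the sought periodic solution.

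With this periodic choice the factors
\[
a_k=\dfrac{1+\beta_k s^*_{k+1}}{1+c_k+\eta_k g(s^*_k,0,y^*_k)}
\]
form an $\omega$-periodic sequence, because $\beta$, $c$, $\eta$, $g$ and the chosen solutions are all $\omega$-periodic. Consequently the product of any $\omega$ consecutive factors is constant in $n$ and equals $\prod_{k=1}^{\omega} a_k=\cR_{PER}^u$. Taking $\lambda=\omega-1$, the window $\prod_{k=n}^{n+\lambda} a_k$ consists of exactly $\omega$ consecutive factors, so
\[
\cR^u(\omega-1)=\limsup_{n\to+\infty}\prod_{k=n}^{n+\omega-1} a_k=\prod_{k=1}^{\omega}a_k=\cR_{PER}^u<1.
\]
By Lemma~\ref{lema:indep} this value is independent of which positive solutions of~\eqref{eq:aux-syst-S} and~\eqref{eq:aux-syst-P} are used, so the inequality $\cR^u(\omega-1)<1$ is intrinsic. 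Thus $\lambda=\omega-1$ witnesses the hypothesis of the extinction theorem, and Theorem~\ref{teo:Main-ext} immediately yields $I_n\to 0$.

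For the global attractivity of the disease-free periodic solution $(s^*_n,0,y^*_n)$ I would invoke the second half of Theorem~\ref{teo:Main-ext}. Here one uses the standing assumptions $a\equiv 0$ and $g(S,I,P)=g_0(S,I)P$ under which $(s^*_n,0,y^*_n)$ is genuinely a solution of~\eqref{eq:principal-disc}: setting $I=0$ with $a\equiv 0$ collapses the first and third equations of~\eqref{eq:principal-disc} to~\eqref{eq:aux-syst-S} and~\eqref{eq:aux-syst-P}. The extinction already established gives $I_n\to 0$, and the convergences $S_n\to s^*_n$ and $P_n\to y^*_n$ are precisely the estimates carried out in the second part of the proof of Theorem~\ref{teo:Main-ext}, so no new argument is required.

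I expect the only genuine step to be the first one: justifying the existence of the $\omega$-periodic auxiliary solutions and observing that, for this choice, the $\limsup$ defining $\cR^u$ collapses to the constant periodic product $\cR_{PER}^u$. Once the periodic solutions are in hand, the remainder is an application of Lemma~\ref{lema:indep} together with a reindexing of the product of an $\omega$-periodic sequence, and the two dynamical conclusions follow as direct transcriptions of Theorem~\ref{teo:Main-ext}.
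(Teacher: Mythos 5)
Your proposal is correct and follows precisely the route the paper intends: the corollary is stated without proof as a direct consequence of the extinction theorem, the whole point being that for $\omega$-periodic auxiliary solutions the window product defining $\cR^u(\omega-1)$ is constant in $n$ and equals $\cR_{PER}^u$. The details you supply (existence of the periodic auxiliary solutions via the Poincar\'e map on the invariant intervals of Lemmas~\ref{lemma:aux-S} and~\ref{lemma:aux-P}, the appeal to Lemma~\ref{lema:indep}, and the observation that the attractivity claim implicitly requires $a\equiv 0$ and $g(S,I,P)=g_0(S,I)P$ so that $(s^*_n,0,y^*_n)$ is actually a solution) are exactly the steps the paper leaves implicit, so you are if anything more careful than the original.
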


\begin{corollary}
If $\cR_{PER}^\ell~>1$ then the infective $(I_n)$ is strongly persistent in system~\eqref{eq:principal-disc}, where $(x^*_n)$ and $(z^*_n)$ are the components of the solution $((x^*_n, z^*_n))$ in the periodic version of~\eqref{eq:aux2a-disc}. Moreover, there exist a periodic orbit of period $\omega.$
\end{corollary}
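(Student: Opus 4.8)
The plan is to treat the two assertions separately and reduce each to results already established. For the strong persistence of $(I_n)$ I would invoke the general strong persistence theorem, Theorem~\ref{teo:Main-ext}, after showing that in the periodic regime the threshold $\cR^\ell(\lambda)$ collapses onto $\cR_{PER}^\ell$. First I would note that, since all coefficients are $\omega$-periodic and H\ref{Cond-4}) furnishes a \emph{unique} globally asymptotically stable solution of~\eqref{eq:aux2a-disc} with $\eps=0$, this solution is itself $\omega$-periodic: the shift $(x^*_{n+\omega},z^*_{n+\omega})$ is again a globally attracting solution of the same periodic system, so uniqueness forces it to coincide with $(x^*_n,z^*_n)$. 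By Lemma~\ref{lema:indep} the value $\cR^\ell(\lambda)$ is independent of the chosen positive solution, so I may evaluate it on this periodic one. The factor $k\mapsto (1+\beta_k x^*_{k+1})/(1+c_k+\eta_k g(x^*_k,0,z^*_k))$ is then $\omega$-periodic, so any product of $\omega$ consecutive terms equals $\prod_{k=1}^{\omega}$; taking $\lambda+1=\omega$ (or a positive multiple, to keep $\lambda\in\N$) gives $\cR^\ell(\lambda)=\cR_{PER}^\ell>1$, and Theorem~\ref{teo:Main-ext} yields strong persistence.

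For the existence of an $\omega$-periodic orbit I would pass to the Poincar\'e (period) map $\mathcal P\colon(\R_0^+)^3\to(\R_0^+)^3$, $\mathcal P(S_0,I_0,P_0)=(S_\omega,I_\omega,P_\omega)$, whose fixed points are exactly the $\omega$-periodic solutions. By H\ref{Cond-aditional}) the map is well defined; since the right-hand sides are $C^1$ (H\ref{cond-f})) and the implicit one-step updates are uniquely and smoothly solvable (illustrated by the explicit form~\eqref{eq:principal-disc-explicit} in the bilinear case, and in general by the implicit function theorem together with the monotonicity in H\ref{cond-f})), $\mathcal P$ is continuous, and by H\ref{Cond-aditional2}) it leaves the nonnegative cone invariant. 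The two structural ingredients are: (i) $\mathcal P$ is point dissipative, since by H\ref{Cond-7a}) and Lemma~\ref{lemma:region} every positive orbit eventually enters the bounded set $\{S+I+P\le L\}$; and (ii) $\mathcal P$ is uniformly persistent with respect to the splitting $X=X_0\cup\partial X_0$, $X_0=\{I>0\}$, which is precisely the strong persistence just proved. I would then apply the standard fixed point theorem for point-dissipative, uniformly persistent maps (in finite dimension such $\mathcal P$ is automatically compact on bounded sets) to obtain a fixed point $\bar z\in X_0$, that is, an $\omega$-periodic solution with $\bar I>0$.

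To see that this periodic orbit is genuinely positive I would also record lower bounds on the remaining components. Discarding the nonnegative gain terms in the third equation of~\eqref{eq:principal-disc} gives $P_{n+1}\ge (1+r_n)P_n/(1+b_nP_n)$, so by comparison with~\eqref{eq:aux-syst-P} and Lemma~\ref{lemma:aux-P} one gets $\liminf_n P_n\ge r^\ell/b^u>0$, forcing $\bar P>0$; positivity of $\bar S$ then follows from the influx $\Lambda_n\ge\Lambda^\ell>0$ in the first equation together with H\ref{Cond-aditional2}).

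The main obstacle I anticipate is ingredient (ii): verifying uniform persistence in the sharp form required by the fixed point theorem, namely that the disease-free invariant set in $\partial X_0$ (the globally attracting periodic orbit $(s^*_n,0,y^*_n)$ produced by the extinction analysis) is compact, isolated and acyclic, and that no interior orbit is forward-asymptotic to it. This isolation/acyclicity check, rather than the periodicity reduction of the first part, is where the real effort lies; a secondary delicate point is the uniform lower bound for the susceptibles $S_n$, since the predation term $a_nf(S_{n+1},I_n,P_n)P_n$ is controlled only through boundedness, so one must exploit the monotonicity of $f$ in H\ref{cond-f}) (or the usual normalization $f(0,\cdot,\cdot)=0$) to close that estimate.
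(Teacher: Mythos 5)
The paper states this corollary \emph{without proof}, so there is no line-by-line comparison to make; your job was to supply the missing argument, and your overall route is the natural one and almost certainly the intended one (the authors cite Zhao's monograph \cite{Zhao-DSPB-2003}, whose Theorem 1.3.6 is exactly the fixed point theorem you invoke). Your first half is essentially right: once one has an $\omega$-periodic positive solution of~\eqref{eq:aux2a-disc} with $\eps=0$, the factor sequence in~\eqref{eq:liminf-threshold} is $\omega$-periodic, so with $\lambda+1=\omega$ (or a multiple thereof) the $\liminf$ is the constant value $\cR_{PER}^\ell$, Lemma~\ref{lema:indep} makes the choice of positive solution immaterial, and the paper's strong persistence theorem applies. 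However, your derivation of the periodicity of $(x^*_n,z^*_n)$ has a genuine flaw: H\ref{Cond-4}) does not assert \emph{uniqueness} of the globally asymptotically stable solution, and global attractivity cannot force the shifted solution to coincide with the original --- two distinct solutions can each attract all others, since they merely approach one another asymptotically. The clean fix: $(x^*_{n+\omega},z^*_{n+\omega})$ is again a solution of the same periodic system, so global attractivity gives $|(x^*_{n+\omega},z^*_{n+\omega})-(x^*_n,z^*_n)|\to 0$; taking a limit point $p$ of the bounded sequence $(x^*_{m\omega},z^*_{m\omega})_{m}$ and using continuity of the period map $Q$ of the two-dimensional subsystem yields $Q(p)=p$, i.e.\ an $\omega$-periodic solution, which is then itself globally attracting and can be fed into Lemma~\ref{lema:indep}. (Alternatively one may read the corollary as presupposing this periodic solution, since its statement refers to ``the solution in the periodic version of~\eqref{eq:aux2a-disc}''.)

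On the second half, the Poincar\'e-map argument is correct in outline, but your assessment of where the difficulty lies is misplaced. The isolation/acyclicity analysis of the disease-free set is \emph{not} needed: that machinery (Hale--Waltman, Theorem 1.3.1 in \cite{Zhao-DSPB-2003}) is a device for \emph{proving} uniform persistence, whereas here uniform strong persistence of $I$ is already delivered by the paper's theorem --- note that its contradiction argument produces a lower bound $\eps_0$ independent of the solution, which is exactly the uniformity required. Given uniform persistence, point dissipativity (H\ref{Cond-7a}) and Lemma~\ref{lemma:region}), compactness of the period map (automatic in finite dimension), and convexity of $X_0=\{I>0\}$, Theorem 1.3.6 of \cite{Zhao-DSPB-2003} gives the fixed point in $X_0$ directly. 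The points that do require care are rather: (i) the persistence theorem is established for solutions with positive initial data, so the state space $X$ (e.g.\ a compact absorbing set) and $X_0$ must be chosen forward invariant and within that scope; and (ii) as you yourself observe, a uniform lower bound on $S_n$ does not follow from H1)--H9) alone without an extra hypothesis such as $f(0,\cdot,\cdot)=0$ --- but since the corollary claims only the existence of an $\omega$-periodic orbit with nontrivial infective component, you can measure the distance to $\partial X_0$ by the $I$-coordinate alone and avoid needing lower bounds on $S$ and $P$ altogether.
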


To do some simulation, we consider $f(x,y,z)=x$, $g(x,y,z)=z$, the particular solutions $s^*_k=\Lambda/\mu$, $y^*_k=r/b$ and $$(x^*_n,z^*_n)=\left(\frac{-K_1+\sqrt{K_1^2+2\Lambda K_2}}{K_2},-\frac{K_1}{2a}+\frac{1}{2a}\sqrt{K_1^2+2\Lambda K_2}+r/b\right),$$
where $K_1=\mu+\frac{ar}{b}$ and $K_2=\frac{2\gamma a^2}{b};$  we also considered the following particular set of parameters, and the exception of $\beta$ and $\eta$ we assume that they are all constants: $\Lambda_n=0.3$, $\mu_n=0.1$, $a_n=0.4$, $\beta_n=\beta_0(1+0.7\cos(\pi n/5))$, $\eta_n=0.3(1+0.7\cos(\pi n/5))$, $c_n=0.18$, $r_n=0.3$, $b_n=0.2$, $\gamma_n=0.1$ and $\theta=0.9$.

When $\beta_0=0.17,$ we obtain $\mathcal R_{PER}^u\approx 0.44<1$ and we conclude that we have the extinction (figure~\ref{fig-no-periodic-PER-1}). When $\beta_0=2.2,$ we obtain $\mathcal R_{PER}^\ell(\lambda)\approx 3.013>1$ and we conclude that the infectives are uniformly strong persistent (figure~\ref{fig-no-periodic-DF-1}).

In extinction and uniform strong persistence scenario we considered, respectively, the following initial conditions: $(S_0,I_0,P_0)=(0.8,0.6,0.1)$, $(S_0,I_0,P_0)=(1.7,0.2,0.3)$ and $(S_0,I_0,P_0)=(2.3,0.4,0.7)$; $(S_0,I_0,P_0)=(1.5,0.1,0.2)$, $(S_0,I_0,P_0)=(0.7,0.2,0.4)$ and $(S_0,I_0,P_0)=(0.3,0.15,0.9)$.
\begin{figure}
  \begin{minipage}[b]{.32\linewidth}
    \includegraphics[width=\linewidth]{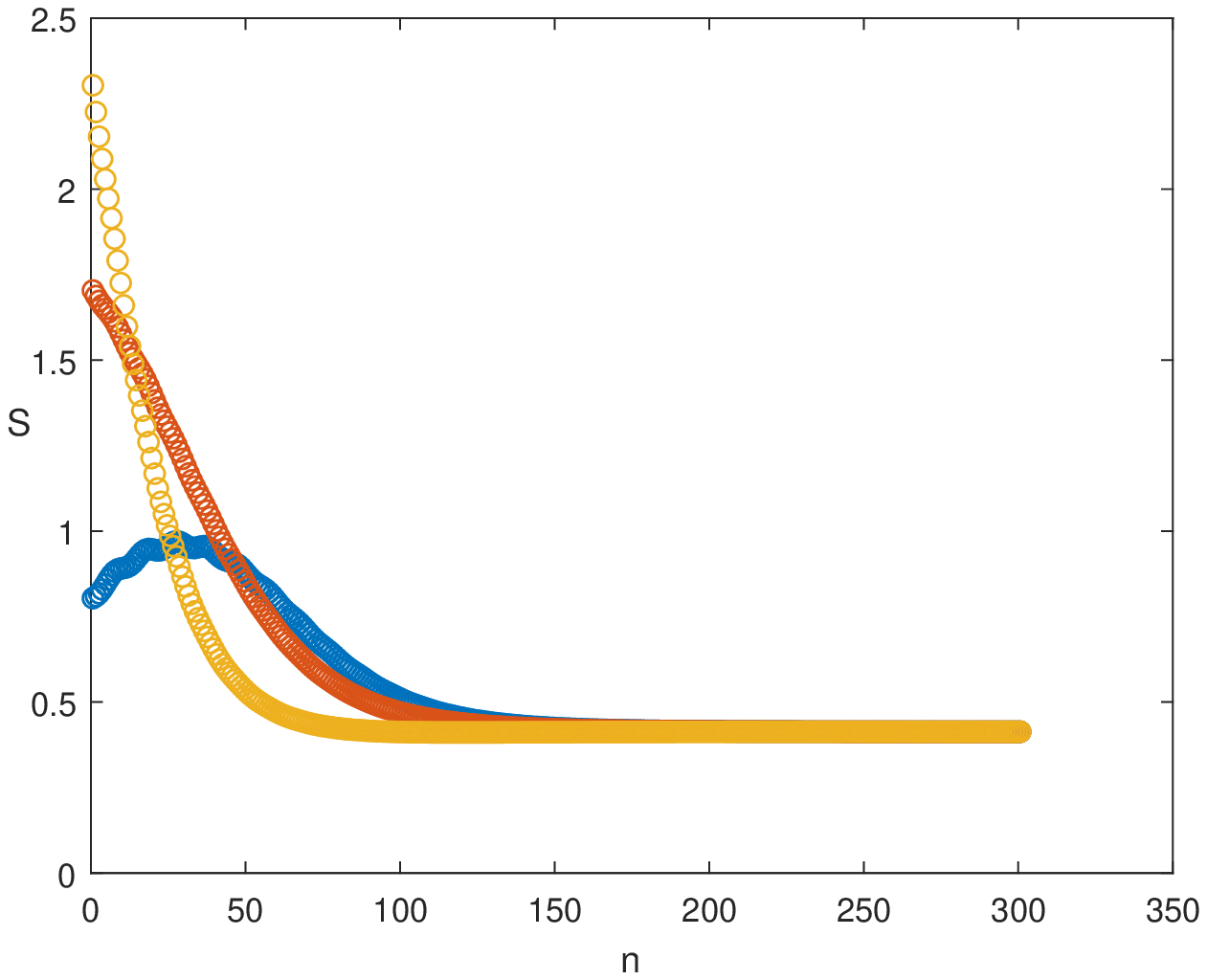}
  \end{minipage}
  \begin{minipage}[b]{.32\linewidth}
        \includegraphics[width=\linewidth]{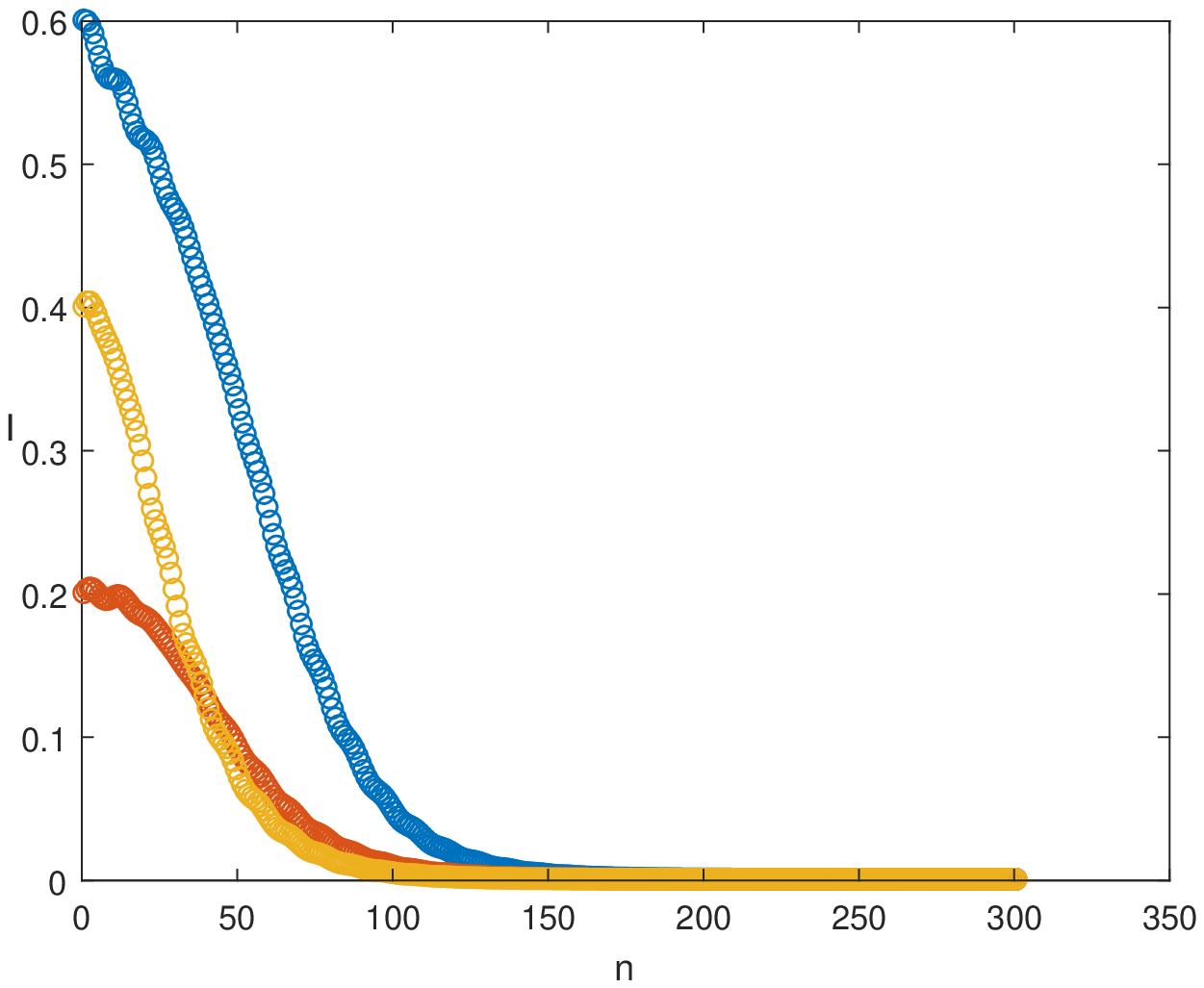}
  \end{minipage}
  \begin{minipage}[b]{.32\linewidth}
        \includegraphics[width=\linewidth]{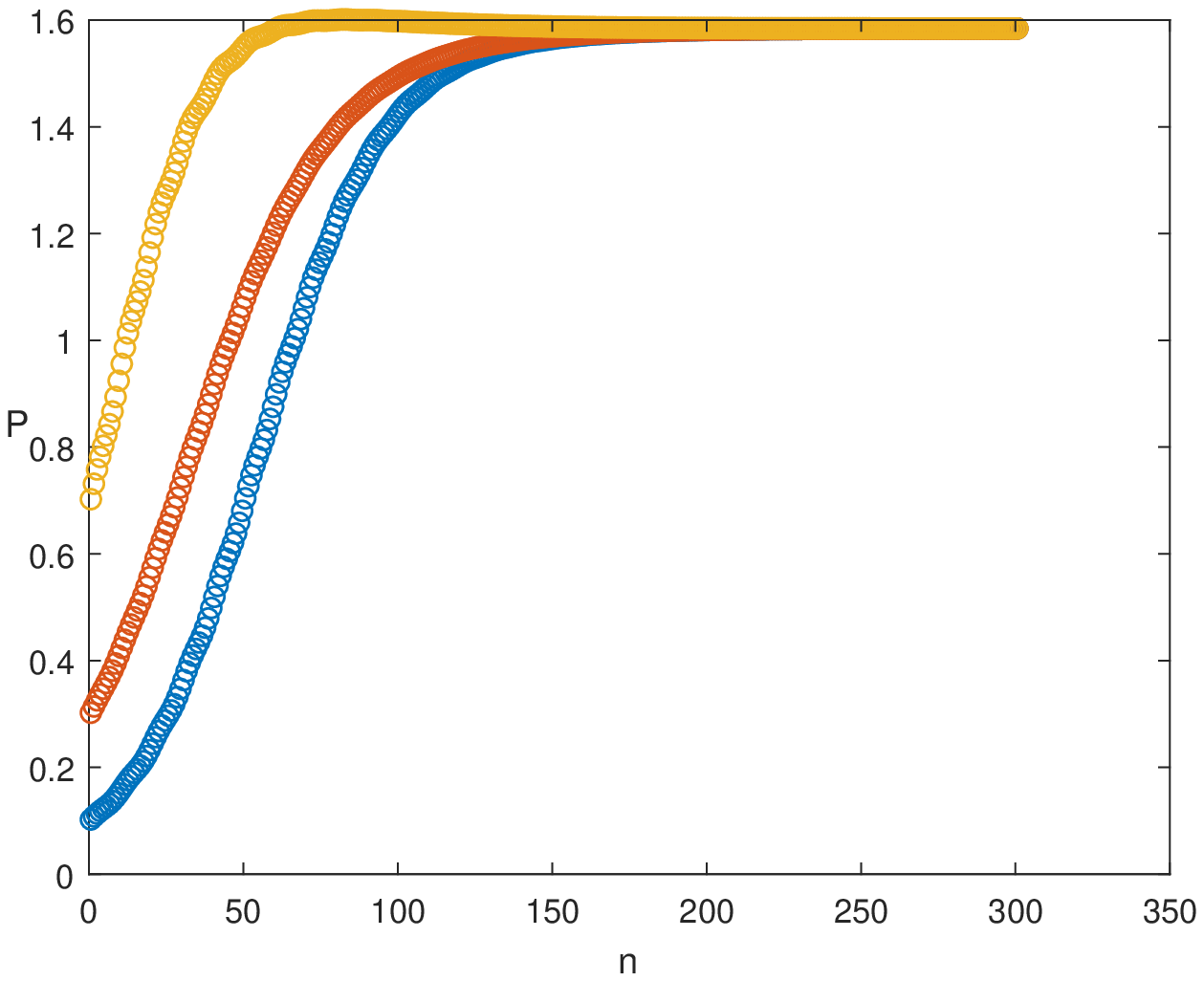}
  \end{minipage}
    \caption{Extinction, when $\beta_0=0.17$.}
      \label{fig-no-periodic-PER-1}
\end{figure}
\begin{figure}
  \begin{minipage}[b]{.32\linewidth}
    \includegraphics[width=\linewidth]{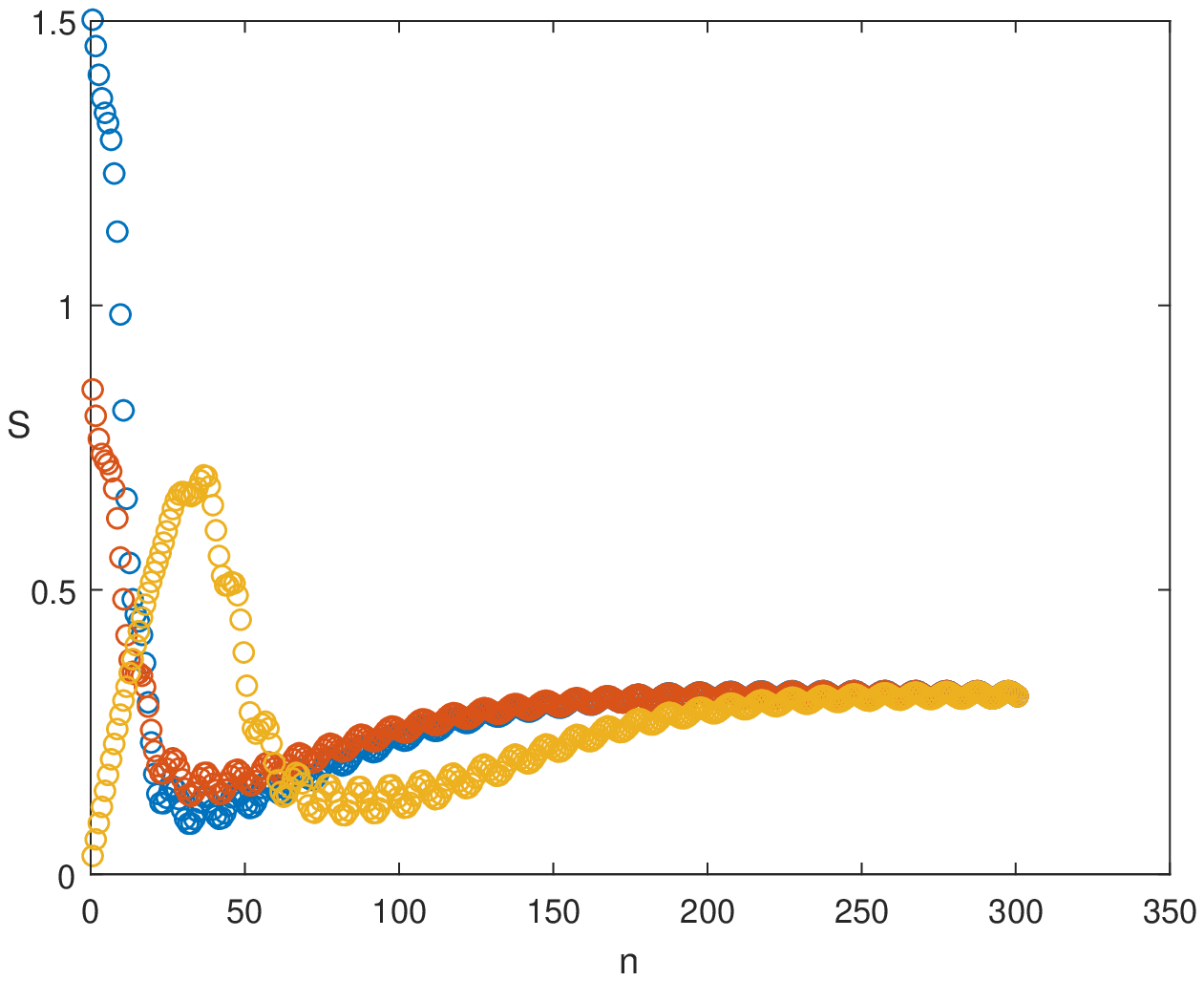}
  \end{minipage}
  \begin{minipage}[b]{.32\linewidth}
        \includegraphics[width=\linewidth]{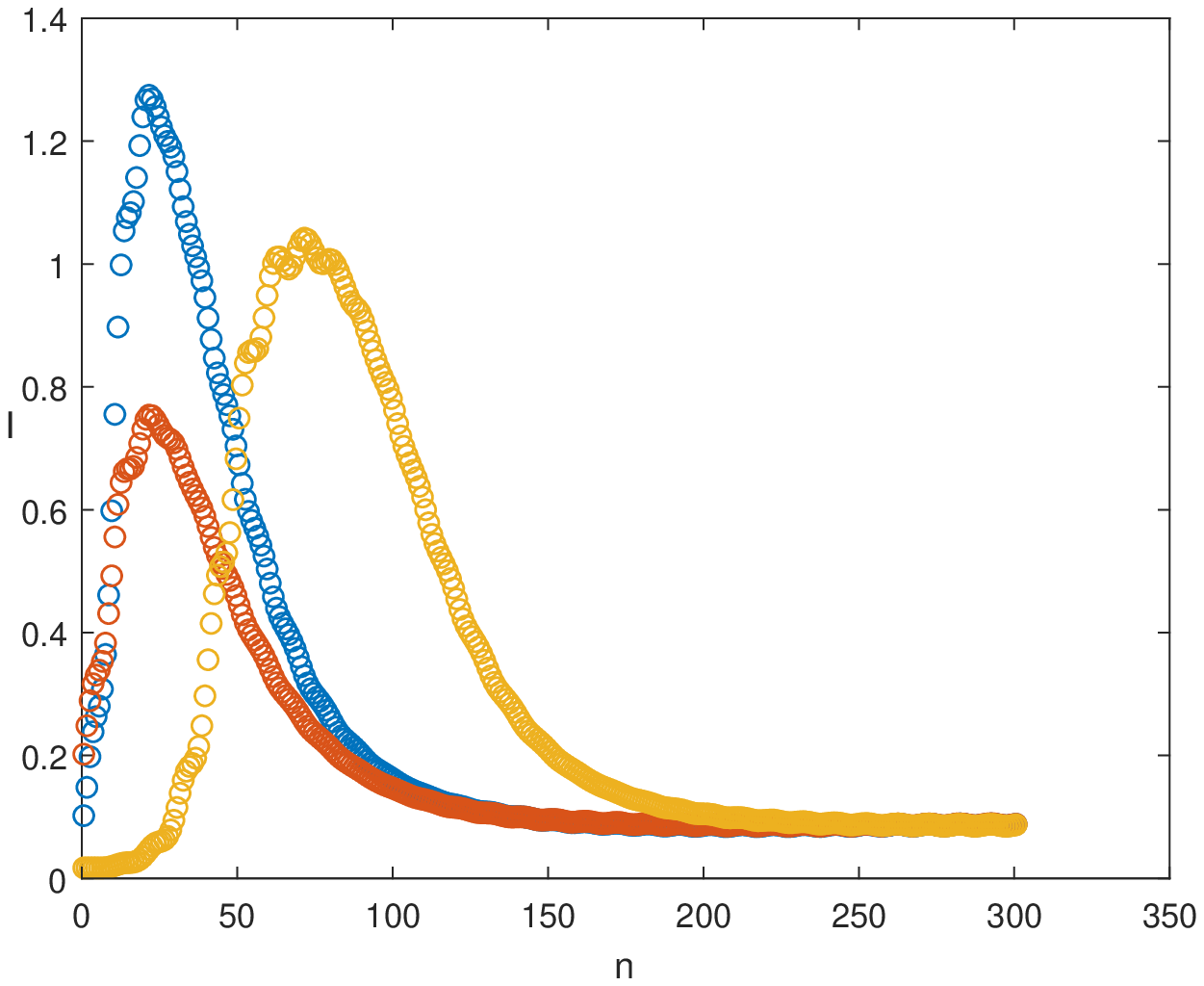}
  \end{minipage}
  \begin{minipage}[b]{.32\linewidth}
        \includegraphics[width=\linewidth]{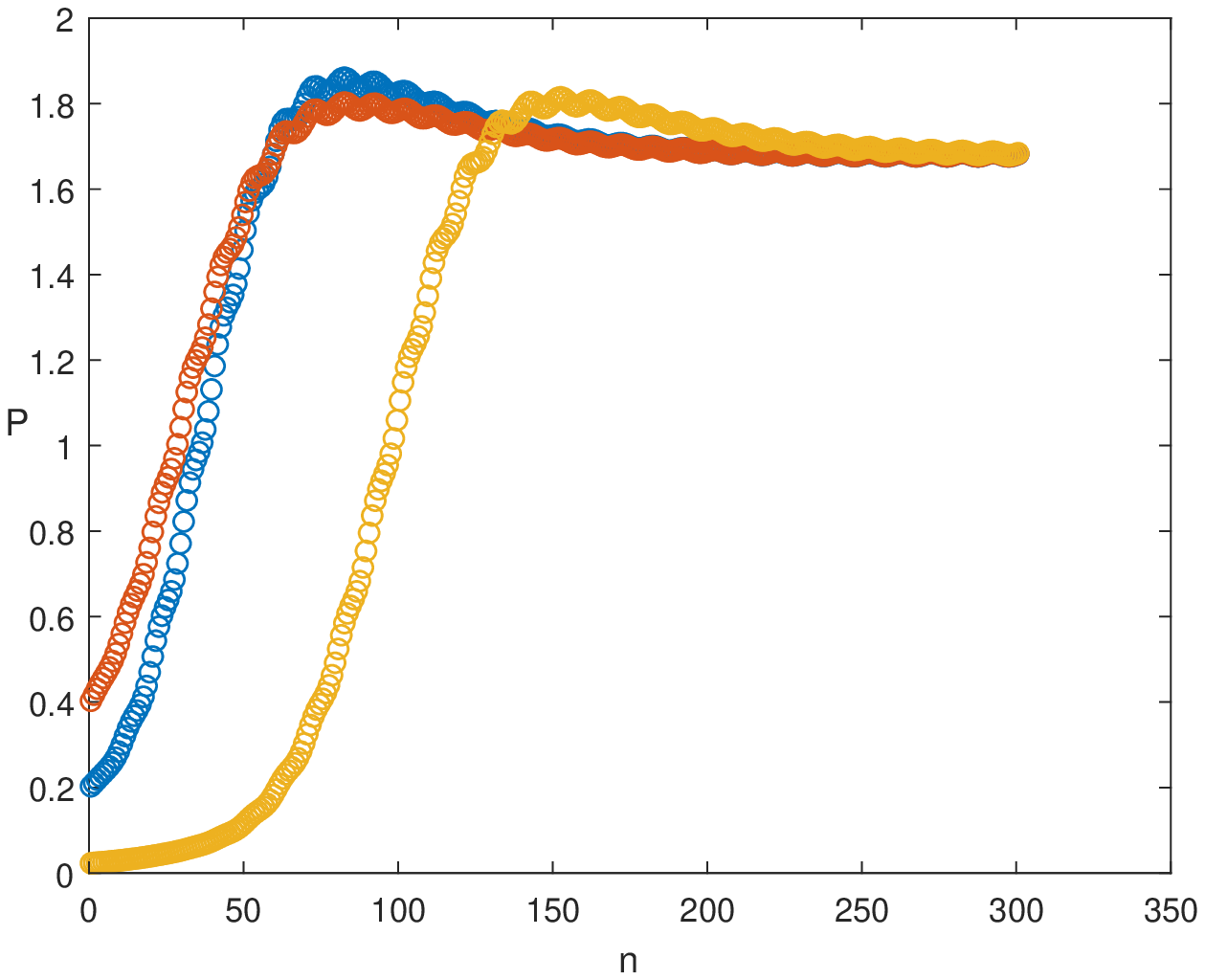}
  \end{minipage}
    \caption{Strong persistence, when $\beta_0=2.2$.}
      \label{fig-no-periodic-DF-1}
\end{figure}

\subsection{Autonomous model}
Consider the system~\eqref{eq:principal-disc}, and assume now that $f(x,y,z)=x$, $g(x,y,z)=z$, $\Lambda_n=\Lambda$, $\mu_n=\mu$, $a_n=a$, $\beta_n=\beta$, $\eta_n=\eta$, $c_n=c$, $r_n=r$, $b_n=b$ and $\gamma_n=\gamma$, $\theta_n=\theta$. Then we obtain following the model:

\begin{equation}\label{eq:autonomous-disc}
\begin{cases}
S_{n+1}-S_n=\Lambda-\mu S_{n+1}-aS_{n+1}P_n-\beta S_{n+1}I_n\\
I_{n+1}-I_n=\beta S_{n+1}I_n-\eta I_{n+1}P_n-cI_{n+1}\\
P_{n+1}-P_n=(r-bP_{n+1})P_n+\gamma aS_{n+1}P_n+\theta\eta I_{n+1}P_n
\end{cases}.
\end{equation}

Conditions H\ref{Cond-1}) to H\ref{Cond-2}) are immediate. Conditions H\ref{Cond-aditional}) and H\ref{Cond-aditional2}) follow from the discussion on~\eqref{eq:principal-disc-explicit}. Condition H\ref{Cond-7a}) follows from Lemma~\ref{lemma:region} and H\ref{Cond-4}) and  H\ref{Cond-5}) follow from Lemma \ref{lemma:aux-S} and Lemma~\ref{lemma:aux-P}, respectively.

For each solution $(s^*_n)$ of~\eqref{eq:aux-syst-S} with $s_0>0$, each solution $(y^*_0)$ of~\eqref{eq:aux-syst-P} with $y_0>0$ and each solution $((x^*_n,z^*_n))$ of~\eqref{eq:aux2a-disc} with $\eps=0$ and initial conditions $x_0>0$ and $z_0>0$, and each $\lambda \in \N$, we set

\[\mathcal R_{A}^\ell=\dfrac{1+\beta\left(\frac{-K_1+\sqrt{K_1^2+2\Lambda K_2}}{K_2}\right) }{1+c+\eta \left(-\frac{K_1}{2a}+\frac{1}{2a}\sqrt{K_1^2+2\Lambda K_2}+r/b\right)}\]
and
\[\mathcal R_{A}^u=\dfrac{1+\beta \left(\Lambda/\mu\right) }{1+c+\eta \left(r/b\right)  },\]
where $K_1=\mu+\frac{ar}{b}$ and $K_2=\frac{2\gamma a^2}{b}.$

\begin{corollary}
If $\mathcal R_{A}^u<1$ then the infective $(I_n)$ in system~\eqref{eq:autonomous-disc} go to extinction.
\end{corollary}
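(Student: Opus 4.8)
The plan is to deduce this corollary directly from the extinction theorem of Section~\ref{sec2} (the first statement labelled Theorem~\ref{teo:Main-ext}), since system~\eqref{eq:autonomous-disc} is exactly the autonomous instance of~\eqref{eq:principal-disc} with $f(x,y,z)=x$ and $g(x,y,z)=z$, and the standing hypotheses H\ref{Cond-1}) to H\ref{Cond-5}) have already been verified for this model in the preceding paragraph. First I would record that the extra hypothesis of the extinction theorem holds: since $g(x,y,z)=z$ does not depend on its first two arguments, $g(S+I,0,P)=P=g(S,I,P)$, so the inequality $g(S+I,0,P)\le g(S,I,P)$ holds with equality.

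The core of the argument is to evaluate the threshold $\cR^u(\lambda)$ defined in~\eqref{eq:limsup-threshold} for constant parameters and to show that it reduces to a power of $\mathcal R_{A}^u$. By Lemma~\ref{lema:indep}, the value of $\cR^u(\lambda)$ is independent of the particular positive solutions of~\eqref{eq:aux-syst-S} and~\eqref{eq:aux-syst-P} appearing in~\eqref{eq:limsup-threshold}, so I am free to use the constant equilibrium solutions. A direct substitution shows that $s^*_n\equiv\Lambda/\mu$ solves~\eqref{eq:aux-syst-S} (indeed $\tfrac{\Lambda}{1+\mu}+\tfrac{\Lambda/\mu}{1+\mu}=\tfrac{\Lambda}{\mu}$) and that $y^*_n\equiv r/b$ solves~\eqref{eq:aux-syst-P} (indeed $\tfrac{(r+1)(r/b)}{1+b(r/b)}=\tfrac{r}{b}$), both positive by H\ref{Cond-1}) and H\ref{Cond-1a}). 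With these choices, all parameters constant and $g(s^*_k,0,y^*_k)=y^*_k=r/b$, every factor in~\eqref{eq:limsup-threshold} equals
\[
\frac{1+\beta\,(\Lambda/\mu)}{1+c+\eta\,(r/b)}=\mathcal R_{A}^u ,
\]
and since the product runs over the $\lambda+1$ indices $k=n,\dots,n+\lambda$, the quantity under the $\limsup$ is the constant $(\mathcal R_{A}^u)^{\lambda+1}$. Hence $\cR^u(\lambda)=(\mathcal R_{A}^u)^{\lambda+1}$ for every $\lambda\in\N$.

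To conclude, if $\mathcal R_{A}^u<1$ then $(\mathcal R_{A}^u)^{\lambda+1}<1$ for every $\lambda\in\N$, so in particular there exists $\lambda\in\N$ with $\cR^u(\lambda)<1$; applying Theorem~\ref{teo:Main-ext} to~\eqref{eq:autonomous-disc} then gives extinction of the infectives $(I_n)$. I do not anticipate a genuine obstacle, as the statement is a specialization of an already established theorem; the only point demanding care is the identity $\cR^u(\lambda)=(\mathcal R_{A}^u)^{\lambda+1}$, where one should invoke Lemma~\ref{lema:indep} to legitimately replace the $\limsup$ over arbitrary attracting solutions by the constant equilibrium values, rather than merely passing to a limit of convergent solutions.
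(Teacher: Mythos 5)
Your proof is correct and takes essentially the same route the paper intends: the corollary is a direct specialization of the extinction theorem, and the paper (which leaves the proof implicit) defines $\mathcal R_{A}^u$ exactly as the common factor of $\cR^u(\lambda)$ evaluated at the constant solutions $s^*_n\equiv\Lambda/\mu$ and $y^*_n\equiv r/b$. Your identity $\cR^u(\lambda)=(\mathcal R_{A}^u)^{\lambda+1}$, justified via Lemma~\ref{lema:indep}, together with the check that $g(S+I,0,P)=P=g(S,I,P)$, supplies precisely the details the paper omits.
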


\begin{corollary}
If $\mathcal R_{A}^\ell>1$ then the infective $(I_n)$ in system~\eqref{eq:autonomous-disc} are strongly persistent.
\end{corollary}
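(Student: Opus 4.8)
The plan is to recognise \eqref{eq:autonomous-disc} as the particular instance of the general system \eqref{eq:principal-disc} obtained with $f(x,y,z)=x$, $g(x,y,z)=z$ and constant coefficients, for which the standing hypotheses H1)--H9) have already been verified in the preamble of this subsection, and then to reduce the threshold $\cR^\ell(\lambda)$ to the stated quantity $\mathcal R_A^\ell$ so that the general strong-persistence result applies. First I would observe that, because the coefficients are constant, the auxiliary system \eqref{eq:aux2-disc} (that is, \eqref{eq:aux2a-disc} with $\eps=0$) is an autonomous planar map; the globally asymptotically stable solution whose existence is granted by H8) must then be the positive fixed point $(x^*,z^*)$, to which every positive solution converges.

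Next I would identify $(x^*,z^*)$ explicitly. Setting $x_{n+1}=x_n=x^*$ and $z_{n+1}=z_n=z^*$ in \eqref{eq:aux2-disc} yields the equilibrium relations
\[
\Lambda = x^*(\mu + a z^*), \qquad b z^* = r + \gamma a x^*.
\]
Substituting $x^*=\Lambda/(\mu+az^*)$ into the second relation gives a quadratic in $z^*$, and a direct check shows that the closed forms
\[
x^*=\frac{-K_1+\sqrt{K_1^2+2\Lambda K_2}}{K_2},\qquad
z^*=-\frac{K_1}{2a}+\frac{1}{2a}\sqrt{K_1^2+2\Lambda K_2}+\frac{r}{b},
\]
with $K_1=\mu+ar/b$ and $K_2=2\gamma a^2/b$, solve them: one computes $\mu+az^*=\tfrac12\bigl(K_1+\sqrt{K_1^2+2\Lambda K_2}\bigr)$, whence $x^*(\mu+az^*)=\bigl((K_1^2+2\Lambda K_2)-K_1^2\bigr)/(2K_2)=\Lambda$, while $z^*-r/b=\gamma a x^*/b$ reproduces the second relation. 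In particular $x^*,z^*>0$, so $(x^*,z^*)$ is an admissible positive solution.

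Then, by the independence Lemma~\ref{lema:indep}, $\cR^\ell(\lambda)$ does not depend on the particular positive solution of \eqref{eq:aux2-disc} entering \eqref{eq:liminf-threshold}, so I would take the constant solution $(x_n^*,z_n^*)\equiv(x^*,z^*)$. For this choice every factor in the product \eqref{eq:liminf-threshold} equals $(1+\beta x^*)/(1+c+\eta z^*)$, hence the product is constant in $n$ and
\[
\cR^\ell(\lambda)=\left(\frac{1+\beta x^*}{1+c+\eta z^*}\right)^{\lambda+1}=\bigl(\mathcal R_A^\ell\bigr)^{\lambda+1}.
\]
Consequently $\mathcal R_A^\ell>1$ forces $\cR^\ell(\lambda)>1$ for every $\lambda\in\N$; fixing any such $\lambda$ and invoking the general strong-persistence theorem proved above for system \eqref{eq:principal-disc} under the condition $\cR^\ell(\lambda)>1$ delivers the strong persistence of $(I_n)$ in \eqref{eq:autonomous-disc}.

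The only genuine work is the algebraic verification that the displayed $(x^*,z^*)$ is the positive equilibrium, together with the observation that in the autonomous setting the liminf in \eqref{eq:liminf-threshold} collapses to a single factor raised to the power $\lambda+1$; since $(\mathcal R_A^\ell)^{\lambda+1}>1\iff\mathcal R_A^\ell>1$, the exponent is immaterial, which is exactly why $\mathcal R_A^\ell$ is recorded without it. The main potential obstacle is ensuring that H8) really supplies global asymptotic stability of this fixed point for the planar map \eqref{eq:aux2-disc}, so that the constant solution may legitimately replace an arbitrary positive one in \eqref{eq:liminf-threshold}; granting the hypotheses as stated in this subsection, the remainder is routine.
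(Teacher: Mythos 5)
Your proposal is correct and follows exactly the route the paper intends: the corollary is a direct specialization of the general strong-persistence theorem, with H1)--H9) granted as verified in the preamble of the subsection, the constant solution $(x^*,z^*)$ of~\eqref{eq:aux2-disc} identified by the equilibrium relations $\Lambda=x^*(\mu+az^*)$ and $bz^*=r+\gamma a x^*$, Lemma~\ref{lema:indep} justifying its use in~\eqref{eq:liminf-threshold}, and $\cR^\ell(\lambda)=\bigl(\mathcal R_A^\ell\bigr)^{\lambda+1}>1$ whenever $\mathcal R_A^\ell>1$. The paper leaves all of this implicit, so your write-up (including the algebraic verification of the closed forms for $x^*$ and $z^*$, which checks out) simply makes the intended argument explicit.
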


To do some simulation, we consider the following particular set of parameters: $\Lambda=0.3$, $\mu=0.1$, $a=0.4$, $\eta=0.3$, $c=0.18$, $r=0.3$, $b=0.2$, $\gamma=0.1$ and $\theta=0.9$.

When $\beta=0.17$ we obtain $\mathcal R_{A}^u\approx 0.93<1$ and we conclude that we have the extinction. When $\beta=2.2$ we obtain $\mathcal R_{A}^\ell(\lambda)\approx 1.14>1$ and we conclude that the infectives are strongly persistent.

In uniform strong persistence and extinction scenario we considered, respectively, the following initial conditions: $(S_0,I_0,P_0)=(0.8,0.6,0.1)$, $(S_0,I_0,P_0)=(1.7,0.2,0.3)$ and $(S_0,I_0,P_0)=(2.3,0.4,0.7)$; $(S_0,I_0,P_0)=(1.5,0.1,0.2)$, $(S_0,I_0,P_0)=(0.7,0.2,0.4)$ and $(S_0,I_0,P_0)=(0.3,0.15,0.9)$.
\begin{figure}
  \begin{minipage}[b]{.32\linewidth}
    \includegraphics[width=\linewidth]{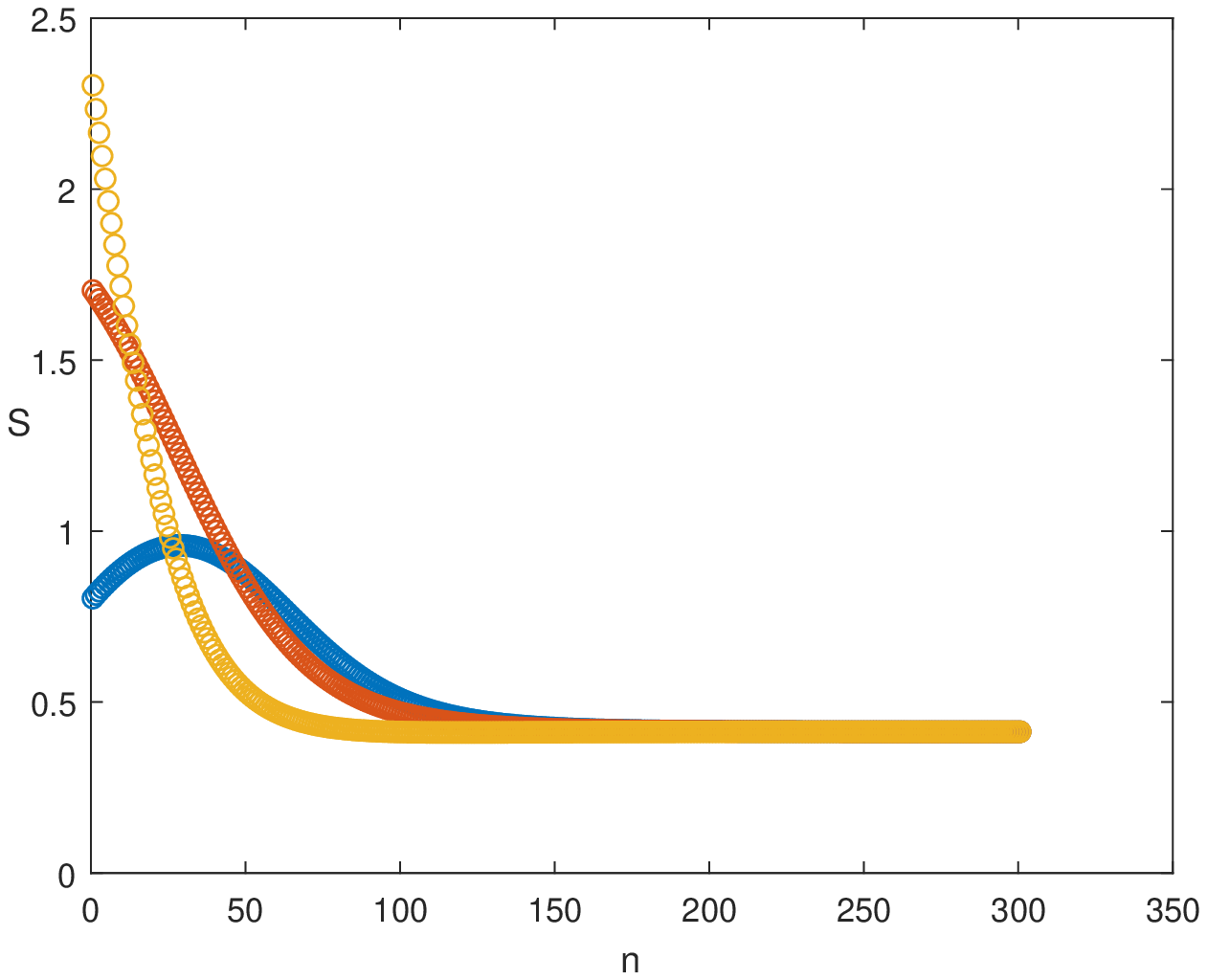}
  \end{minipage}
  \begin{minipage}[b]{.32\linewidth}
        \includegraphics[width=\linewidth]{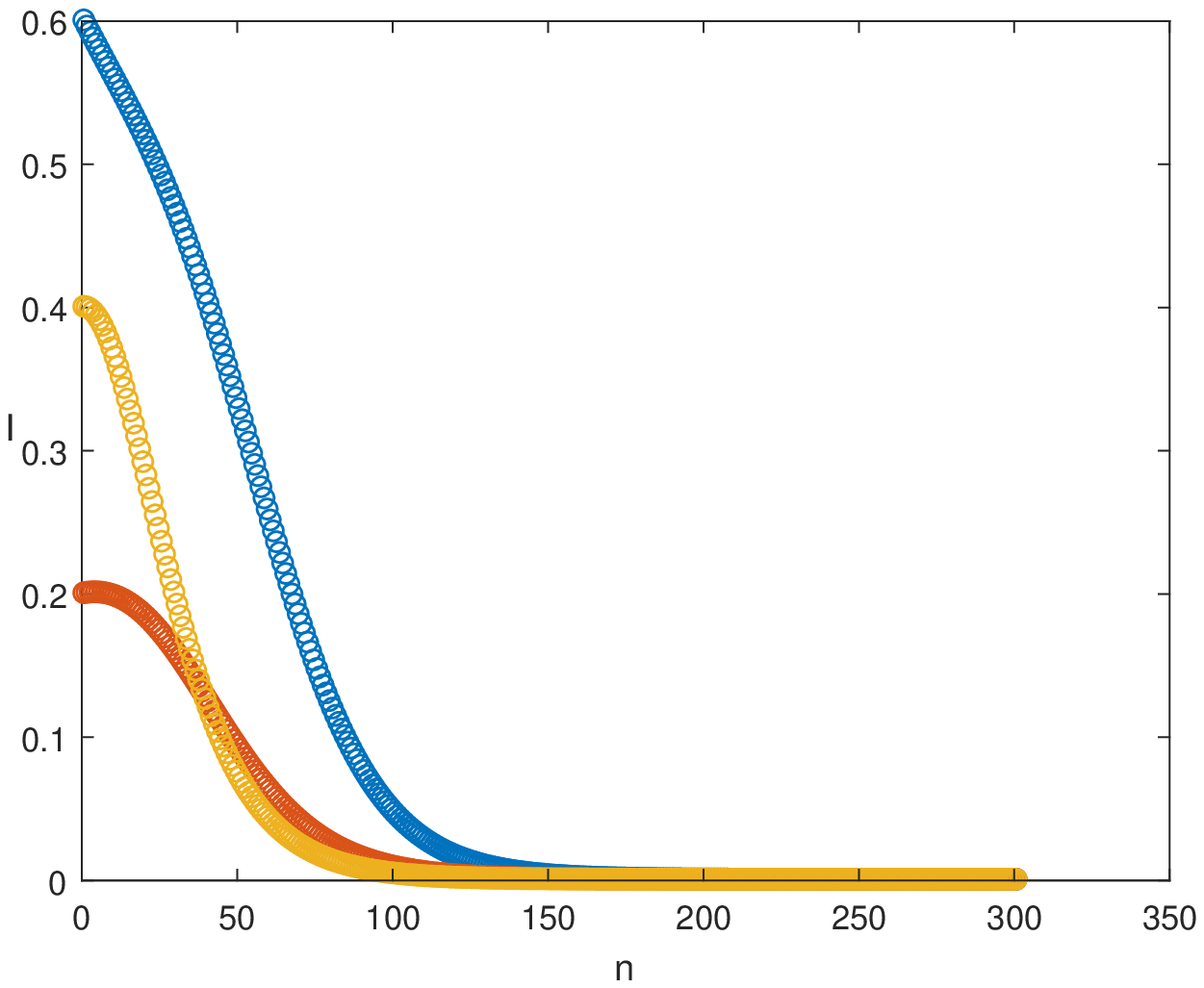}
  \end{minipage}
  \begin{minipage}[b]{.32\linewidth}
        \includegraphics[width=\linewidth]{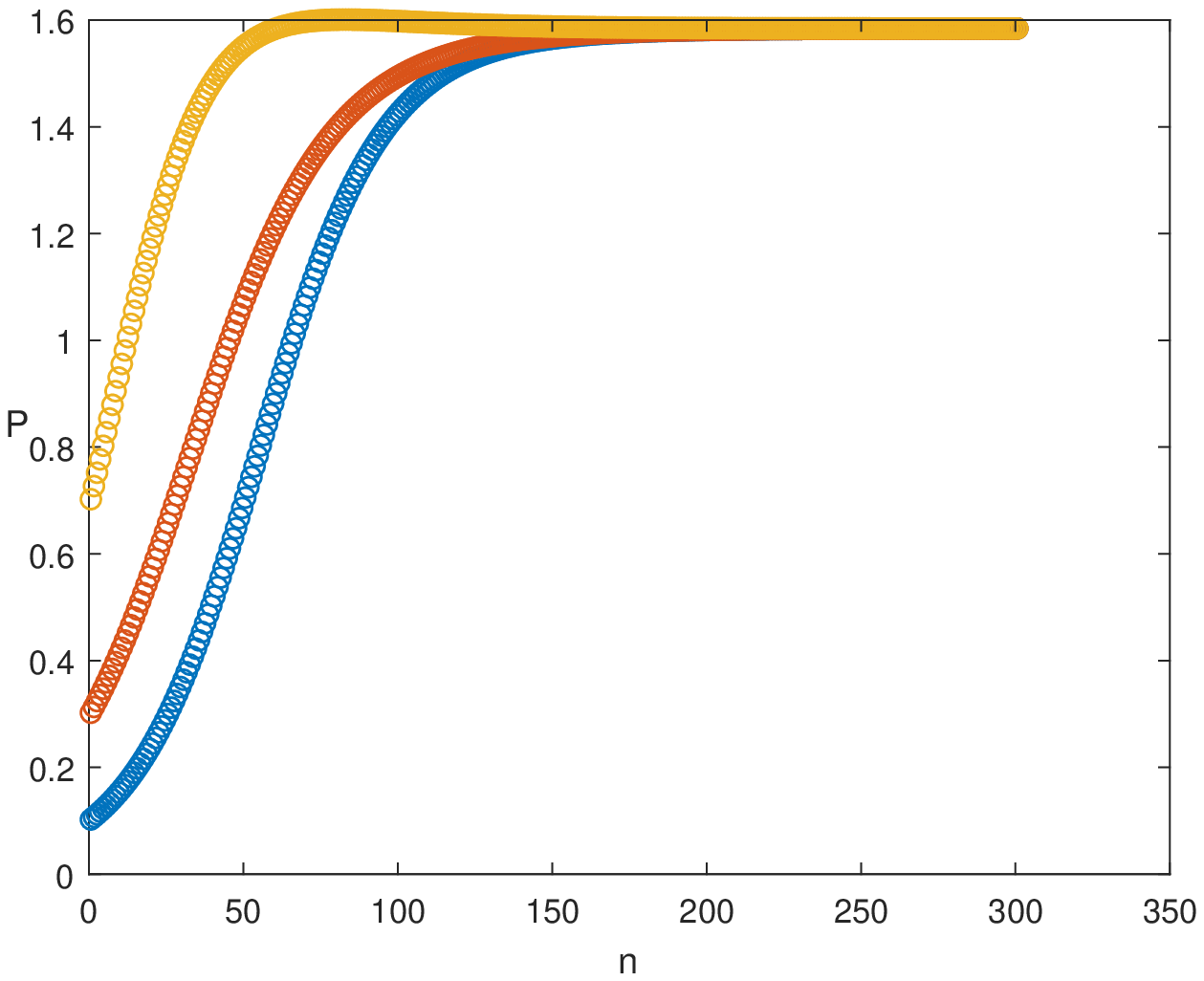}
  \end{minipage}
    \caption{Extinction, when $\beta=0.17$.}
      \label{fig-no-autonomous-DF-1}
\end{figure}
\begin{figure}
  \begin{minipage}[b]{.32\linewidth}
    \includegraphics[width=\linewidth]{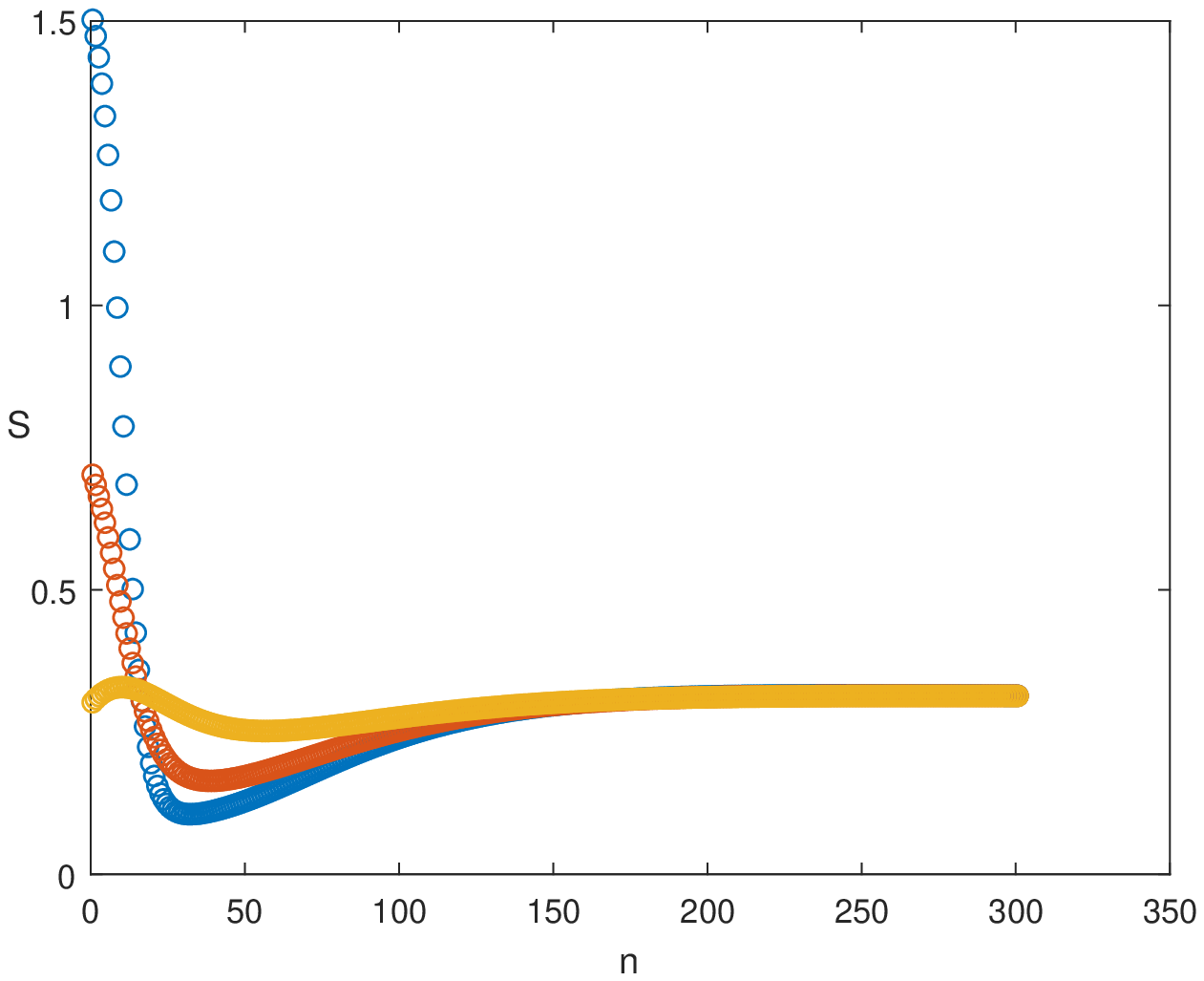}
  \end{minipage}
  \begin{minipage}[b]{.32\linewidth}
        \includegraphics[width=\linewidth]{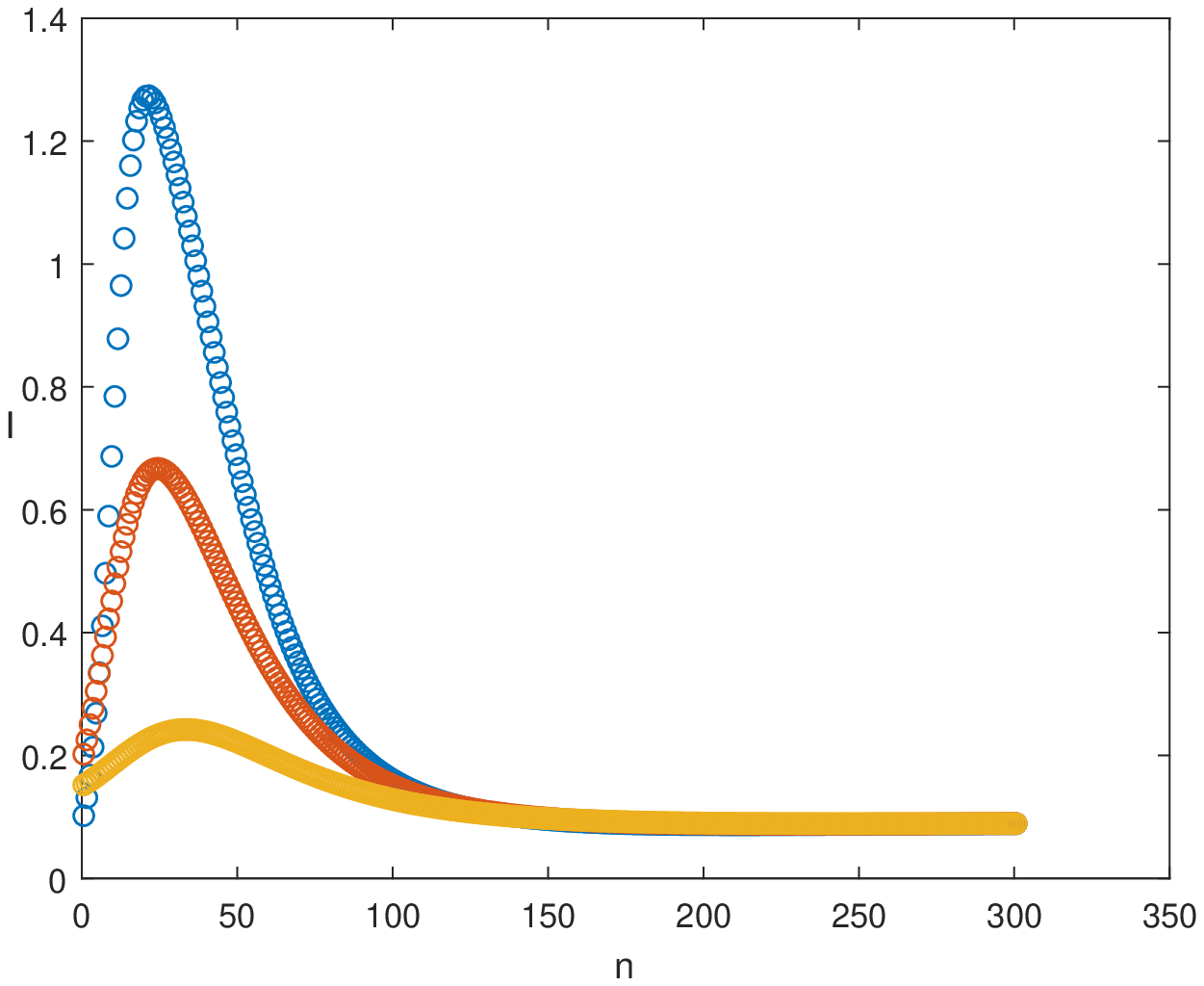}
  \end{minipage}
  \begin{minipage}[b]{.32\linewidth}
        \includegraphics[width=\linewidth]{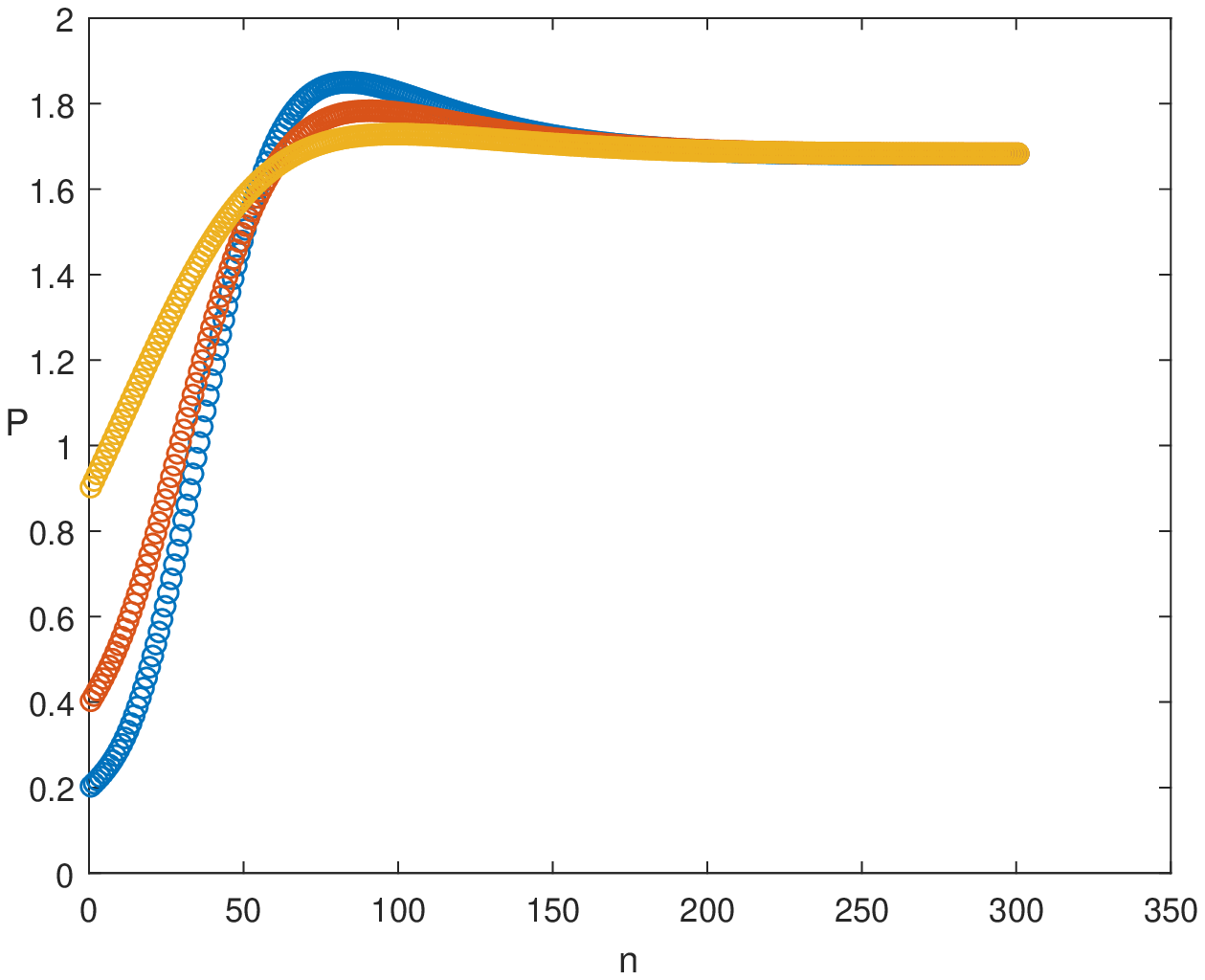}
  \end{minipage}
    \caption{Strong persistence, when $\beta=2.2$.}
      \label{fig-no-autonomous-PER-1}
\end{figure}

\bibliographystyle{elsart-num-sort}

\begin{thebibliography}{10}
\expandafter\ifx\csname url\endcsname\relax
  \def\url#1{\texttt{#1}}\fi
\expandafter\ifx\csname urlprefix\endcsname\relax\def\urlprefix{URL }\fi

\bibitem{Bai-Xo-2018} H. Bai and R. Xo, Global stability of a delayed eco-epidemiological model with holling type III functional response, Springer Proceedings in mathematics ans Statistics 225 (2018), 119-130.

\bibitem{Chakraborty-Das-Haldar-Kar-2015} Chakraborty, K., Das K., Haldar, S., Kar,T.K, A mathematical study of an eco-epidemiological system on disease persistence and existinction perspective, Appl. Math. and Comput. 254 (2015), 99-112.

\bibitem{Dobson-QRV-1988} A. P. Dobson, \emph{The population biology of parasite-induced changes in host behavior}, Q. Rev. Biol. 30 (1988), 139-165.

\bibitem{Friend-H-2002} M. Friend, Avian disease at the Salton Sea, Hydrobiologia, 161 (2002), 293-306.

\bibitem{Garrione-Rebelo-NARWA-2017} M. Garrione and C. Rebelo, Persistence in seasonally varying predator-prey systems via the basic reproduction, Nonlinear Anal. Real World Appl. 30 (2016), 73-98.

\bibitem{Hethcote-Wang-Han-Ma-TPB-2004}  H. W. Hethcote, W. Wang, L. Han and Z. Ma, \emph{A predator-prey model with infected prey},Theor. Popul. Biol. 66 (2004), 259-268.

\bibitem{Hu-Teng-Jia-Zhang-ADE-2014} Hu, Teng, Jia and Zhang, Complex dynamical behaviors in a discrete eco-epidemiological model with disease in prey, Adv. Difference Equ. (2014), 2014:265.

\bibitem{Koopmans-Wilbrink-Conyn-Natrop-Nat-Vennema-Lancet-2004} M. Koopmans, B.Wilbrink, M. Conyn, G. Natrop, H. van der Nat and H. Vennema,\emph{Transmission of H7N7 avian influenza A virus to human beings during a large outbreak in commercial poultry farms in the Netherlands}, Lancet. 363 (2004), 587-593.

\bibitem{Krebs-Blackwell-Scientific-Publishers-1978} J. R. Krebs, Optimal foraging: decision rules for predators, In: Krebs, J. R., Davies, N.B.(Eds.), Behavioural Ecology: an Evolutionary approach, First ed. Blackwell Scientific Publishers, Oxford, (1978), 23-63.

\bibitem{Jesus-Silva-Vilarinho-preprint1-2020} Lopo F. de Jesus, C. M.~Silva and Helder Vilarinho, An Eco-epidemiological model with general functional response of predator to prey, preprint.

\bibitem{Jesus-Silva-Vilarinho-preprint2-2020} Lopo F. de Jesus, C. M.~Silva and Helder Vilarinho, Periodic orbits for periodic eco-epidemiological systems with infected prey, preprint.

\bibitem{Mateus-IJDE-2016} J. Mateus, A Nonautonomous Discrete Epidemic Model with Isolation, Int. J. of Difference Equ., 11 (2016), 105-121.

\bibitem{Mickens-JCAM-1999} R. E. Mickens, Discretizations of nonlinear differential equations using explicit nonstandard methods, J. Comput. Appl. Math. 110 (1999), 181-185.

\bibitem{Purnomo-Darti-Suryanto-2017} A. S. Purnomo, I. Darti, A. Suryanto, Dynamics of eco-epidemiological model with harvesting, AIP Conference Proceeding 1913 (2017), 020018.

\bibitem{Rebelo-Margheri-Bacaer-JMB-2012}C.~Rebelo, A.~Margheri, N.~Baca{\"e}r, Persistence in seasonally forced
  epidemiological models, J. Math. Biol. 64~(6) (2012), 933-949.

\bibitem{Silva-JMAA-2017} C. M.~Silva, Existence of Periodic Solutions for Eco-Epidemic Model with Disease in the Prey, J. Math. Anal. Appl. 53 (2017), 383-397.

\bibitem{Van den Driessche-Watmough-2002} P. Van den Driessche, J. Watmough. Reproduction numbers and sub-threshould endemic equilibia for compartmental models of disease transmission, Math. Biosci. 180 (2002), 29-48.

\bibitem{Wang-Zhao-JDDE-2008} W.~Wang, X.-Q. Zhao, Threshold dynamics for compartmental epidemic models in periodic environments, J. Dynam. Differential Equations 20~(3) (2008), 699-717.

\bibitem{Zhao-DSPB-2003} Xiao-Qiang Zhao, \emph{Dynamical Systems in Population Biology}, CMS Books in Mathematics/Ouvrages de Math{\'e}matiques de la SMC, 16 (2003), Springer-Verlag, New York.

\bibitem{Niu-Zhang-Teng-AMM-2011} Xingge Niu, Tailei Zhang, Zhidong Teng, The asymptotic behavior of a nonautonomous eco-epidemic model with disease in the prey, Applied Mathematical Modelling 35 (2011), 457-470.

\bibitem{Lu-Wang-Liu-DCDS-B-2018} Yang Lu, Xia Wang, Shengqiang Liu, A non-autonomous predator-prey model with infected prey, Discrete Contin. Dyn. Syst. B 23 (2018), 3817-3836.




\end{thebibliography}

\end{document}